\numberwithin{equation}{section}
\newaliascnt{thm}{equation}
\newtheorem{thm}[thm]{Theorem}
\newaliascnt{prop}{equation}
\newtheorem{prop}[prop]{Proposition}
\newaliascnt{lem}{equation}
\newtheorem{lem}[lem]{Lemma}
\newaliascnt{cor}{equation}
\newtheorem{cor}[cor]{Corollary}
\newaliascnt{rmk}{equation}
\newtheorem{rmk}[rmk]{Remark}
\theoremstyle{definition}
\newaliascnt{dfn}{equation}
\newtheorem{dfn}[dfn]{Definition}
\newaliascnt{data}{equation}
\newtheorem{data}[data]{Data}
\newaliascnt{obs}{equation}
\def\@endtheorem{\qed\endtrivlist\@endpefalse }
\newaliascnt{ex}{equation}
\newtheorem{ex}[ex]{Example}
\newcommand\Ac{\mathcal{A}_c}
\newcommand\Bc{\mathcal{B}_c}
\newcommand\C{\mathcal{C}}
\newcommand\J{\mathcal{J}}
\newcommand\K{\mathcal{K}}
\renewcommand\k{\Bbbk}
\newcommand\m{\mathfrak{m}}
\newcommand\N{\mathbb{N}}
\renewcommand\P{\mathbb{P}_{\k}}
\newcommand\p{\mathfrak{p}}
\newcommand\R{\mathcal{R}}
\newcommand\Sym{\mathcal{S}}
\newcommand\tr{^\mathrm{T}}
\newcommand\w{w}
\newcommand\ua{\underline{\alpha}}
\newcommand\ub{\underline{\beta}}
\newcommand\us{\underline{\sigma}}
\newcommand\ut{\underline{\w}}
\newcommand\as{\ua\cdot\us}
\newcommand\bs{\ub\cdot\us}
\DeclareMathOperator\ann{ann}
\DeclareMathOperator\Ass{Ass}
\DeclareMathOperator\coker{coker}
\DeclareMathOperator\depth{depth}
\DeclareMathOperator\Ext{Ext}
\DeclareMathOperator\Fitt{Fitt}
\DeclareMathOperator\Hom{Hom}
\DeclareMathOperator\Ht{ht}
\DeclareMathOperator\im{im}
\DeclareMathOperator\grade{grade}
\DeclareMathOperator\pd{pd}
\DeclareMathOperator\Quot{Quot}
\DeclareMathOperator\rank{rank}
\DeclareMathOperator\reg{reg}
\DeclareMathOperator\Scr{Scr}
\DeclareMathOperator\Spec{Spec}
\DeclareMathOperator\Supp{Supp}
\DeclareMathOperator\syl{syl}
\author{Jeff Madsen}
\address{Department of Mathematics, University of Notre Dame, Notre Dame, IN 46556}
\email{jmadsen@nd.edu}
\title{Equations of Rees algebras of ideals in two variables}
\date{\today}
\subjclass[2010]{Primary: 13A30, Secondary: 13D02, 14E05, 14H50, 14Q05}
\keywords{Bidegree, elimination theory, Hilbert-Burch matrix, integral closure, parametrization, plane curve, rational curve, rational normal scroll, Rees algebra, Sylvester form, symmetric algebra}
\begin{document}

\begin{abstract}
Let $I$ be an ideal of height two in $R=\k[x_0,x_1]$ generated by forms of the same degree, and let $\K$ be the ideal of defining equations of the Rees algebra of $I$. Suppose that the second largest column degree in the syzygy matrix of $I$ is $e$. We give an algorithm for computing the minimal generators of $\K$ whose degree is at least $e$, as well as a simple formula for the bidegrees of these generators. In the case where $I$ is an almost complete intersection, we give a generating set for each graded piece $\K_{i,*}$ with $i\ge e-1$.
\end{abstract}

\maketitle

Let $I$ be an ideal in a Noetherian ring $R$. The Rees algebra of $I$, defined as $\R(I)=R[It]=\bigoplus_{j=0}^\infty I^jt^j$, is a central object in both commutative algebra, where it is used to study the asymptotic behavior of $I$, and algebraic geometry, where it corresponds to blowing up the closed subscheme $V(I)$ in $\Spec R$. A common approach to understanding Rees algebras is by representing them as quotients of polynomial rings. Namely, if $I=(f_1,\ldots,f_n)$, then $\R(I)$ is a quotient of the polynomial ring $S=R[T_1,\ldots,T_n]$ via the $R$-linear map $T_i\mapsto f_it$. The kernel of this map is an ideal $\K\subset S$, called the ideal of defining equations of $\R(I)$. Much work has been done to determine the ideal $\K$ under various conditions (such as \cite{bm15,bcs10,cd13a,hsv12,hsv13,joh97,lan14,lp14,mor96,mu96,tai02,uv93}). However, even in the relatively simple case where $R=\k[x_0,x_1]$ is a polynomial ring in two variables over a field and $I$ is a homogeneous ideal generated in a single degree $d$, the problem of determining the equations of $\R(I)$ remains open.

This problem is of interest not only in commutative algebra and algebraic geometry, but also in elimination theory and geometric modeling, where it is connected to implicitization of parametrized varieties (e.g.\ \cite{bj03,cox08,csc98,sc95,sgd97}). We focus on the case $R=\k[x_0,x_1]$ where $\k$ is an algebraically closed field. To any height two homogeneous ideal $I\subset R$ generated by forms $f_1,\ldots,f_n$ of degree $d$, we may associate a morphism $\Phi:\P^1\xrightarrow{(f_1:\cdots:f_n)}\P^{n-1}$, which parametrizes a curve $\C\subset\P^{n-1}$. Conversely, to any map $\Phi:\P^1\to \C$ parametrizing a curve in $\P^{n-1}$ by equations $(f_1:\cdots:f_n)$, we may associate the ideal $I=(f_1,\ldots,f_n)\subset R$. Then the Rees algebra $\R(I)$ is the bihomogeneous coordinate ring of the graph of $\Phi$,
\[\Gamma_\Phi=\{(p,\Phi(p))\mid p\in\P^1\}\subset\P^1\times\P^{n-1}.\]
Since $S=R[T_1,\ldots,T_n]$ is the bihomogeneous coordinate ring of $\P^1\times\P^{n-1}$, this means that the ideal $\K\subset S$ of defining equations of $\R(I)$ is the ideal of $\Gamma_\Phi$. That is, for any bihomogeneous $h\in S$,
\[h\in\K\Leftrightarrow h(x_0,x_1,f_1(x_0,x_1),\ldots,f_n(x_0,x_1))=0.\]
In particular, the elements of $\K$ having $x$-degree 0 are just the implicit equations of the curve $\C$. (By $x$-degree we mean the degree in the variables $x_0,x_1$, as opposed to the degree in $T_1,\ldots,T_n$, or $T$-degree.)

The first step in computing the equations of the Rees algebra is to compute the syzygy matrix $\varphi$ of $I$, which determines the $T$-linear part of $\K$. Since $I$ is generated in one degree, each column of the matrix $\varphi$ has the same degree. Several results have been proved by imposing restrictions on the syzygy matrix. For ideals generated by three forms, Cox, Hoffman, and Wang \cite{chw08} gave a recursive algorithm, using Sylvester forms, for computing a minimal generating set for the equations of the Rees algebra when $I$ has a syzygy of degree 1. This was generalized by Cortadellas and D'Andrea \cite{cd13b} to the case where a syzygy of $I$ of minimal degree $\mu$, when viewed as a column vector, has linearly dependent entries (a ``generalized zero''). However, their method only produces part of a minimal generating set. In particular, the equations produced by this algorithm all have $x$-degree $\ge\mu-1$. These are in fact all the minimal generators of $\K$ having $x$-degree $\ge\mu-1$, as proved by Kustin, Polini, and Ulrich \cite{kpu13}.

Kustin, Polini, and Ulrich \cite{kpu11} took another approach to determine the equations of $\K$ when $I$ has any number of generators and has a presentation matrix $\varphi$ which is almost linear, meaning all but one of the columns of $\varphi$ have linear entries. They showed that the Rees algebra is a quotient of a ring $A$, which is the coordinate ring of a rational normal scroll, by a height one prime ideal $\K A$. The equations of the scroll being easy to compute, this reduces the problem to computing the generators of the divisor $\K A$ on $A$, which was carried out in \cite{kpu09}. The equations are even given explicitly, assuming $\varphi$ is put into a canonical form.

In this paper, we extend the method of Kustin, Polini, and Ulrich to arbitrary height two ideals $I$ in $\k[x_0,x_1]$ generated by forms of the same degree. Unlike the almost linearly presented case, we cannot produce all the defining equations of the Rees algebra in this way, but we do obtain all the minimal generators of $x$-degree greater than or equal to the second largest degree of a column of $\varphi$, and in some cases, in degree one less. \autoref{kregen} gives an abstract form of these generators, and \autoref{kregenalg} gives a recursive algorithm for computing them. In particular, their bidegrees follow a simple formula; see \autoref{kregencor}.

In \autoref{exalp}, we show how to use \autoref{kregen} to solve the almost linearly presented case, reproving \cite[Theorem 3.6]{kpu11}. In \autoref{exgen0}, we show how the algorithm in \autoref{kregenalg} reduces to the Sylvester form algorithm in \cite[Theorem 2.10]{cd13b}, when $I$ has three generators and has a syzygy of minimal degree with a generalized zero. In \autoref{ex2n}, we give another example, applying \autoref{kregenalg} to the case where $I$ has three generators and a syzygy of minimal degree 2 without a generalized zero, a case which was previously solved in \cite{bus09} and \cite{cd13b} using Morley forms.

In Section 1, we introduce the setup of the problem and the necessary definitions. In particular, we construct a sequence of modules $E_m$ which are successive approximations of $I$.

In Section 2, we determine the structure of the modules $\R(E_m)$ by embedding each $E_m$ in a free module. This allows us to embed $\R(E_m)$ in a ring of the form $\R(M)$ where $M=\bigoplus_{i=1}^s\m^{\sigma_i}(\sigma_i)$, which we show in \autoref{rns} is the coordinate ring of a rational normal scroll. This embedding is actually an integral extension, and the two rings are equal in large degree, as we show in \autoref{remint}.

In Section 3, we use the results of Section 2 to compute the ideal of equations of $\R(E_{m+1})$ in $\R(E_m)$ by first computing it in $\R(M)$ and then contracting to $\R(E_m)$. In \autoref{kregen}, we give the elements in $\R(M)$ which contract to a minimal generating set of the $x$-degree $\ge d_m$ part of the ideal of equations of $\R(E_{m+1})$. The contraction may be made explicit using the algorithm given in \autoref{kregenalg}. \autoref{kregencor} applies this in particular to the case $m=n-2$ to obtain the bidegrees of the minimal generators of $\K$ with $x$-degree $\ge d_{n-2}$.

In Section 4, we consider specifically plane curves, that is, the case $n=3$. In this case, we can be more explicit about the $\R(E_1)$-module structure of $\R(M)$, in \autoref{rffree}. We use this to give a generating set, in general not minimal, for each graded component $\K_{i,*}$ with $i\ge d_1-1$.

\section{Setup}

Throughout this paper, we assume $\k$ is a field, $R$ is the two-variable polynomial ring $R=\k[x_0,x_1]$, and $\m$ is the homogeneous maximal ideal $\m=(x_0,x_1)$. We use the convention that $\N$ is the set of nonnegative integers.

Although we wish to compute Rees algebras of ideals, it will be helpful to use Rees algebras of more general modules. If $E$ is a finitely generated $R$-module, then following \cite{suv03}, we define the Rees algebra $\R(E)$ to be the quotient of the symmetric algebra $\Sym(E)$ by its $R$-torsion. As $R$ is a domain, this is consistent with the other standard definitions of Rees algebras of modules, such as in \cite{ehu03}, and it is also consistent with the usual definition of the Rees algebra of an ideal. Recall that if $\varphi$ is the $m\times n$ presentation matrix of $E$, then $\Sym(E)\cong R[T_1,\ldots,T_n]/(g_1,\ldots,g_m)$ where
\begin{equation}\label{eq:defg}\begin{bmatrix}g_1&\cdots&g_m\end{bmatrix}=\begin{bmatrix}T_1&\cdots&T_n\end{bmatrix}\varphi.\end{equation}
The Krull dimension of the Rees algebra is $\dim\R(E)=\dim R+\rank E=2+\rank E$ (\cite[Proposition~2.2]{suv03}).

Let $S=R[T_1,\ldots,T_n]$, with $\deg x_i=(1,0)$ and $\deg T_i=(0,1)$. We refer to the first degree in this bigrading as the $x$-degree or $\deg_x$, and the second degree as the $T$-degree or $\deg_T$. By $S_{i,j}$ we mean the bidegree $(i,j)$ part of $S$, viewed as a $\k$-vector space. Write $S_{i,*}$ for the $k[T_1,\ldots,T_n]$-module $\bigoplus_jS_{i,j}$, and $S_{*,j}$ for the $R$-module $\bigoplus_iS_{i,j}$.

We shall use the same bigrading on any quotient of $S$, such as $\R(I)$. Note that this is not the same as the bigrading induced by the inclusion $\R(I)=R[It]\subset R[t]$. For example, the element $f_it$ has bidegree $(d,1)$ in $R[t]$, however we will think of it as having bidegree $(0,1)$ since it corresponds to the element $T_i\in S$. Thus to be more precise, we consider $\R(I(d))$ rather than $\R(I)$, so that the grading induced by $S$ is the natural grading on the Rees algebra.

If $I$ is a height two ideal of $R$ minimally generated by $n$ forms of degree $d$, then $I(d)$ has a Hilbert-Burch resolution
\begin{equation}\label{eq:ires}0\to\bigoplus_{i=1}^{n-1}R(-d_i)\xrightarrow{\varphi}R^n\to I(d)\to0\end{equation}
where $I$ is generated by the maximal minors of $\varphi$, and thus $\sum_{i=1}^{n-1}d_i=d$. We may assume the column degrees of $\varphi$ are nondecreasing, that is, $1\le d_1\le\cdots\le d_{n-1}$.

For $1\le m\le n-1$, let $\varphi_m$ be the $n\times m$ matrix consisting of the first $m$ columns of $\varphi$ and let $E_m=\coker\varphi_m$, so that $\varphi_{n-1}=\varphi$ and $E_{n-1}=I(d)$. There is a sequence of surjections
\[R^3\twoheadrightarrow E_1\twoheadrightarrow E_2\twoheadrightarrow\cdots\twoheadrightarrow E_{n-1}=I(d)\]
which induce surjections
\[S\twoheadrightarrow\R(E_1)\twoheadrightarrow\R(E_2)\twoheadrightarrow\cdots\twoheadrightarrow\R(E_{n-1})=\R(I(d)).\]
We think of the rings $\R(E_m)$ as successive approximations of $\R(I(d))$. Instead of directly computing the equations of $\R(I(d))$ in $S$, we may compute the equations in $\R(E_{n-2})$, where it will be simpler. We then need to compute the equations of $\R(E_{n-2})$ in $\R(E_{n-3})$, and so on. That is, we have a sequence of inclusions
\[0\subsetneq\K_1\subsetneq\K_2\subsetneq\cdots\subsetneq\K_{n-1}=\K\]
where each $\K_m$ is a prime ideal with $\Ht\K_m=\dim S-\dim\R(E_m)=(2+n)-(2+\rank E_m)=m$. We study the modules $\K_{m+1}\R(E_m)\cong\K_{m+1}/\K_m$. A generating set for $\K$ may be obtained by combining the lifts of generating sets of all the modules $\K_{m+1}/\K_m$.

We summarize the data that we will be assuming.

\begin{data}\label{idealdata}
Let $I$ be a height 2 ideal in $R$ generated by $n$ forms $f_1,\ldots,f_n$ of degree $d$, with presentation matrix $\varphi$, and resolution as given in \eqref{eq:ires}. Let $S=\k[T_1,\ldots,T_n]$, let $g_1,\ldots,g_{n-1}\in S$ be the equations of the symmetric algebra of $I$ as in \eqref{eq:defg}, and let $\K\subset S$ be the ideal of equations of the Rees algebra of $I$. For $1\le m\le n-1$, let $\varphi_m$ be the $n\times m$ matrix consisting of the first $m$ columns of the presentation matrix $\varphi$ and let $E_m=\coker\varphi_m$. Let $\K_m\subset S$ be the ideal of equations of the Rees algebra of $E_m$.
\end{data}

By \eqref{eq:defg}, the equations of $\Sym(E_m)$ are $g_1,\ldots,g_m$. Thus for each $m$, there is a short exact sequence
\begin{equation}\label{eq:symrees}0\to\K_m/(g_1,\ldots,g_m)\to\Sym(E_m)\to\R(E_m)\to0.\end{equation}
Let us deduce some simple properties of the modules $E_m$.

\begin{lem}\label{efree}
Assume \autoref{idealdata} and let $1\le m\le n-1$. Then $E_m$ is torsionfree of rank $n-m$, and for any $w\in\m$, $(E_m)_w$ is a free $R_w$-module.
\end{lem}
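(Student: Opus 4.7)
The plan is to exploit the Hilbert-Burch resolution \eqref{eq:ires} of $I(d)$, together with a companion exact sequence relating $E_m$ directly to $I(d)$, to deduce all three claims at once.

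First I would observe that $\varphi$ in \eqref{eq:ires} is injective, and therefore so is the submatrix $\varphi_m$ formed from any subset of columns: extending an element of $\ker\varphi_m$ by zeros produces an element of $\ker\varphi=0$. This yields the short exact sequence
\[0\to\bigoplus_{i=1}^m R(-d_i)\xrightarrow{\varphi_m}R^n\to E_m\to 0,\]
from which $\rank E_m=n-m$.

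The key step is the construction of a second exact sequence. Write $\varphi=[\varphi_m\mid\varphi_m']$, where $\varphi_m'$ consists of the last $n-1-m$ columns, and set $F_m'=\bigoplus_{i=m+1}^{n-1}R(-d_i)$. The composite $F_m'\xrightarrow{\varphi_m'}R^n\twoheadrightarrow E_m$ is injective by the same argument (if $\varphi_m'(v)=\varphi_m(u)$ then $(u,-v)\in\ker\varphi=0$), and its cokernel is $R^n/\im\varphi=I(d)$, giving
\[0\to F_m'\to E_m\to I(d)\to 0.\]
Since $F_m'$ is free and $I(d)$ is isomorphic as an $R$-module to an ideal in the domain $R$, both outer terms are torsionfree, so $E_m$ is torsionfree (extensions of torsionfree modules by torsionfree modules are torsionfree by a short diagram chase).

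For the local freeness, I would use that $I$ is a homogeneous ideal of height $2$ in the $2$-dimensional polynomial ring $R$, so its only minimal prime is $\m$ and hence $\sqrt I=\m$. Thus $w\in\m$ implies some power of $w$ lies in $I$, forcing $I_w=R_w$. Localizing the preceding sequence at $w$ gives
\[0\to(F_m')_w\to(E_m)_w\to R_w\to 0,\]
which splits since $R_w$ is free, exhibiting $(E_m)_w$ as a free $R_w$-module of rank $n-m$. The only real point that requires attention is the construction and injectivity of the second exact sequence; once that is in place, torsionfreeness and local freeness follow immediately.
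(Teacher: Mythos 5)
Your proof is correct, but it reaches the conclusion by a genuinely different route than the paper. Both arguments begin the same way, deducing the injectivity of $\varphi_m$ from that of $\varphi$ and reading off $\rank E_m=n-m$. After that they diverge. The paper works directly with the presentation $0\to\bigoplus R(-d_i)\xrightarrow{\varphi_m}R^n\to E_m\to0$: it observes via Laplace expansion that $I=I_{n-1}(\varphi)\subset I_m(\varphi_m)$, so $\Ht I_m(\varphi_m)=2$ and the presentation splits after inverting any $w\in\m$, giving local freeness; torsionfreeness is then obtained prime by prime, using this local freeness away from $\m$ and the Auslander--Buchsbaum formula (via $\pd E_m\le1$) to rule out $\m$ as an associated prime. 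You instead introduce the second exact sequence $0\to F_m'\to E_m\to I(d)\to0$, which the paper never uses, and harvest both remaining claims from it: torsionfreeness because an extension of torsionfree modules over a domain is torsionfree, and local freeness because $\sqrt{I}=\m$ forces $I_w=R_w$, so the localized sequence splits. Your route is more elementary --- it avoids Fitting ideals and the Auslander--Buchsbaum formula entirely --- and it isolates a clean structural fact (each $E_m$ is an extension of $I(d)$ by a free module) that is appealing in its own right. The paper's route has the advantage of identifying the non-free locus of $E_m$ with $V(I_m(\varphi_m))$, a determinantal datum in the spirit of arguments used elsewhere in the paper (such as the grade computation for $I_s(\xi)$ in \autoref{winre}). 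All the individual steps in your argument check out: the injectivity of $F_m'\to E_m$, the identification of its cokernel with $R^n/\im\varphi=I(d)$, and the splitting after inverting $w$.
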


\begin{proof}
Because $\varphi$ is injective, and $\varphi_m$ is the restriction of $\varphi$ to a submodule, $\varphi_m$ is injective. Thus $E_m$ has a free resolution
\begin{equation}\label{eq:resem}0\to\bigoplus_{i=1}^mR(-d_i)\xrightarrow{\varphi_m}R^n\to E_m\to0.\end{equation}
By expanding the determinants along the last $n-1-m$ columns, we see that $I=I_{n-1}(\varphi)\subset I_m(\varphi_m)$. Because $\Ht I=2$, we get $\Ht I_m(\varphi_m)=2$. So if we invert $w\in\m$, we get $I_m(\varphi_m)_w=R_w$. This means that after inverting $w$, \eqref{eq:resem} splits, yielding $(E_m)_w\cong R_w^{m-n}$.

It remains to show that $E_m$ is torsionfree, or equivalently that if $\p\neq0$ is a homogeneous prime ideal of $R$, then $\p\not\in\Ass_R(E_m)$. If $\p\neq\m$, then $(E_m)_\p$ is free by the above, so $\p R_\p\not\in\Ass_{R_\p}((E_m)_\p)$ and thus $\p\not\in\Ass_R(E_m)$. On the other hand, $\pd E_m\le 1$ by \eqref{eq:resem}, so $\depth E_m\ge 1$ by the Auslander-Buchsbaum formula. Hence $\m\not\in\Ass_R(E_m)$, therefore $E_m$ is torsionfree.
\end{proof}

As a consequence, we obtain a formula for $\K_m$. While this description does not provide any information about the generators of $\K_m$, it is the basis upon which we will perform our calculations involving $\K_m$.

\begin{prop}\label{resat}
Assume \autoref{idealdata} and let $1\le m\le n-1$. Then $\K_m=(g_1,\ldots,g_m):_S\m^\infty$.
\end{prop}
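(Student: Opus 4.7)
The plan is to interpret both sides as submodules of $\Sym(E_m)$, so that the proposition reduces to an identification of two natural torsion-type submodules. From the exact sequence \eqref{eq:symrees}, $\K_m/(g_1,\ldots,g_m)$ is the kernel of the quotient map $\Sym(E_m)\to\R(E_m)$, which by the definition of $\R(E_m)$ given after \autoref{idealdata} is precisely the $R$-torsion submodule of $\Sym(E_m)$. On the other hand, the quotient $((g_1,\ldots,g_m):_S\m^\infty)/(g_1,\ldots,g_m)$ is exactly the $\m$-torsion submodule $H^0_\m(\Sym(E_m))$. So the whole proposition comes down to showing that these two submodules agree.

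One inclusion is immediate: any element of $\Sym(E_m)$ killed by a power of $\m$ is killed by a nonzero element of $R$ (e.g.\ by $x_0^k$), so it is $R$-torsion. This gives $(g_1,\ldots,g_m):_S\m^\infty\subseteq\K_m$.

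For the reverse inclusion, I would feed in \autoref{efree}. Since symmetric algebra commutes with localization, for any $w\in\m$ we have $\Sym(E_m)_w\cong\Sym_{R_w}((E_m)_w)$, and \autoref{efree} says $(E_m)_w$ is free over $R_w$, so $\Sym(E_m)_w$ is a polynomial ring over $R_w$ and in particular is $R_w$-torsionfree. Applying this to $w=x_0$ and $w=x_1$: given an $R$-torsion element $h\in\Sym(E_m)$, its images in $\Sym(E_m)_{x_0}$ and $\Sym(E_m)_{x_1}$ are zero, so there exist $k_0,k_1$ with $x_0^{k_0}h=0$ and $x_1^{k_1}h=0$. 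Since every monomial in $\m^{k_0+k_1-1}$ is divisible by $x_0^{k_0}$ or by $x_1^{k_1}$, we conclude $\m^{k_0+k_1-1}h=0$, so $h\in H^0_\m(\Sym(E_m))$, finishing the proof.

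I do not anticipate a serious obstacle: all the work is concentrated in \autoref{efree}, which provides the crucial local freeness of $E_m$ on the punctured spectrum. The only mildly subtle point is keeping straight that the torsion of $\Sym(E_m)$ as an $R$-module coincides with its $S$-torsion with respect to $\m S$ (once interpreted in the quotient $S/(g_1,\ldots,g_m)$), but both are computed by the same annihilator condition so no real difficulty arises.
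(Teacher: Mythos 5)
Your proof is correct and rests on the same essential input as the paper's — \autoref{efree} together with localization at elements of $\m$ — just packaged as an identification of the $R$-torsion and $\m$-power-torsion submodules of $\Sym(E_m)$ rather than via the primeness of $\K_m$. The paper localizes at an arbitrary $w\in\m$ to get $\K_mS_w=(g_1,\ldots,g_m)S_w$, deduces equality of the two saturations, and then uses that $\K_m$ is prime (and does not contain $\m$) to drop the saturation on the left; your torsion-submodule argument, with the explicit $x_0,x_1$ localizations, accomplishes exactly the same thing.
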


\begin{proof}
By \autoref{efree}, for any $w\in\m$, $(E_m)_w$ is a free $R_w$-module. Therefore $\R(E_m)_w=\R((E_m)_w)=\Sym((E_m)_w)=\Sym(E_m)_w$. It follows from \eqref{eq:symrees} that $\K_mS_w=(g_1,\ldots,g_m)S_w$, which means $\K_m:_S\m^\infty=(g_1,\ldots,g_m):_S\m^\infty$. But $\K_m$ is a prime ideal, so $\K_m=\K_m:_S\m^\infty=(g_1,\ldots,g_m):_S\m^\infty$.
\end{proof}

\section{The modules \texorpdfstring{$\R(E_m)$}{R(E\_m)}}

Fix $m$. To determine the Rees algebra of $E_m$, we begin by embedding $E_m$ in a free module. Namely, we show in \autoref{reflfree} that $F=E_m^{**}$ is free (where $-^*=\Hom_R(-,R)$); we have an injection $E_m\hookrightarrow E_m^{**}$ since $E_m$ is torsionfree by \autoref{efree}. The free module $F$ is generated in nonpositive degrees, while $E_m$ is generated in degree 0. Thus $E$ is a submodule of $M=F_{\ge0}$. This section is concerned with computing the Rees algebras of $F$ and $M$ and comparing them to the Rees algebra of $E_m$.

The lemma below is the graded version of \cite[Proposition~2.1]{kod95}.

\begin{lem}\label{reflfree}
Let $E$ be an $R$-module of rank $s$ generated in degree 0. Then $E^{**}\cong\bigoplus_{i=1}^sR(\sigma_i)$ with all $\sigma_i\ge0$.
\end{lem}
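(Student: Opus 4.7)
The plan is to dualize a presentation of $E$, deduce that $E^*$ is concentrated in nonnegative degrees, show that $E^*$ is graded free, and then dualize once more to read off the signs of the twists. Since $E$ is generated in degree $0$, there is a graded surjection $R^n \twoheadrightarrow E$ with the untwisted grading on $R^n$. Applying $\Hom_R(-,R)$ produces a graded injection $E^* \hookrightarrow R^n$, so $E^*$ is a finitely generated torsionfree graded $R$-module with $(E^*)_j = 0$ for all $j < 0$.

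Next I would show that $E^*$ is graded free. As a dual, $E^*$ is reflexive; over the two-dimensional regular ring $R = \k[x_0,x_1]$, the depth-$2$ criterion for reflexivity gives $\depth (E^*)_\m \ge 2$, and the Auslander-Buchsbaum formula then forces $\pd E^* = 0$. Since graded projective modules over $\k[x_0,x_1]$ are graded free, we get $E^* \cong \bigoplus_{i=1}^s R(a_i)$ for some integers $a_i$, with $s = \rank E^* = \rank E$. The minimal generator of the summand $R(a_i)$ sits in degree $-a_i$, and because the injection of the previous paragraph forces $E^*$ to have no elements in negative degree, we conclude $a_i \le 0$.

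Dualizing once more yields $E^{**} \cong \bigoplus_{i=1}^s R(-a_i)$, so setting $\sigma_i := -a_i \ge 0$ completes the argument. The main obstacle is the freeness step — confirming that finitely generated reflexive modules over the graded polynomial ring $\k[x_0,x_1]$ are graded free. This is the graded analogue of the local result \cite[Proposition~2.1]{kod95} that the paper cites, and it follows in a routine way from the depth-$2$ characterization of reflexivity over normal two-dimensional rings combined with the Auslander-Buchsbaum formula; the only wrinkle is checking that the argument preserves the grading.
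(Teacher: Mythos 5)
Your proof is correct, and while the freeness step coincides with the paper's (reflexivity gives depth $2$ at $\m$, and Auslander--Buchsbaum over the regular ring $R$ forces projective, hence graded free), you obtain the sign of the twists by a genuinely different mechanism. You dualize the presentation $R^n\twoheadrightarrow E$ to embed $E^*$ into an untwisted $R^n$, conclude that $E^*$ lives in nonnegative degrees so its free summands are $R(a_i)$ with $a_i\le0$, and then dualize once more to get $E^{**}\cong\bigoplus R(-a_i)$ with $-a_i\ge0$. The paper instead works directly with $F=E^{**}$: it proves $F$ free first, and then rules out a negative twist $\sigma_s<0$ by observing that the corresponding row of the matrix of $\xi:R^n\to F$ would have to vanish for degree reasons, forcing $\rank(\im\xi)<s$ and contradicting $\rank(\im\theta)=\rank E=s$. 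Your route has the mild advantage that the degree bound falls out immediately from the graded inclusion $E^*\hookrightarrow R^n$ with no rank bookkeeping, at the cost of one extra dualization and of invoking that duals over the normal ring $R$ are reflexive (alternatively, you could note $E^*$ is a second syzygy from $0\to E^*\to R^n\to R^a$ to get depth $2$ directly); the paper's route avoids ever mentioning $E^*$ and matches the map $\xi$ that it reuses in the sequel. Both arguments are complete.
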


\begin{proof}
Set $F=E^{**}$. Then $F$ is reflexive, so it satisfies Serre's condition $S_2$. Since $F$ is torsionfree, $\dim F=\dim R=2$, hence $F$ is maximal Cohen-Macaulay. Because $R$ is regular, the Auslander-Buchsbaum formula implies $F$ is free. Thus we may write $F=\bigoplus_{i=1}^sR(\sigma_i)$.

To see that all $\sigma_i\ge0$, choose a surjection $R^n\twoheadrightarrow E$ and consider the map $\xi:R^n\to F$ obtained by composing this surjection with the natural map $\theta:E\to E^{**}=F$. Assume by way of contradiction that some $\sigma_i$, say $\sigma_s$, is negative. Then, viewing $\xi$ as an $s\times n$ matrix, the last row of $\xi$ must be zero. This means $\im\xi\subset\bigoplus_{i=1}^{s-1}R(\sigma_i)$, so $\rank(\im\xi)<s$. But $\im\xi=\im\theta$, so $\rank(\im\xi)=\rank E=s$. By contradiction, all $\sigma_i$ are nonnegative.
\end{proof}

Recall from \autoref{efree} that $E_m$ is a torsionfree $R$-module of rank $n-m$. Using \autoref{reflfree}, we extend \autoref{idealdata}:

\begin{data}\label{reesdata}
Assume \autoref{idealdata}, fix $1\le m\le n-1$, and set $s=n-m$. Let $E=E_m$ and let $F=E^{**}\cong\bigoplus_{i=1}^sR(\sigma_i)$, with $\sigma_1\ge\cdots\ge\sigma_s\ge0$. Let $r$ be the unique integer with $1\le r\le s$ such that $\sigma_1\ge\cdots\ge\sigma_r>0=\sigma_{r+1}=\cdots=\sigma_s$. Let $M=F_{\ge0}=\bigoplus_{i=1}^s\m^{\sigma_i}(\sigma_i)$. Let $\xi$ be the composite map $R^n\twoheadrightarrow E\hookrightarrow F$.
\end{data}

Because $E$ is generated in degree 0, the inclusion $E\hookrightarrow F$ factors as $E\hookrightarrow M\hookrightarrow F$. Therefore there are inclusions of Rees algebras $\R(E)\hookrightarrow\R(M)\hookrightarrow\R(F)$. This will allow us to do computations in $\R(E)$ by extending to $\R(M)$ or $\R(F)$ and then contracting, as in \autoref{kre}. The Rees algebra of $F$ is easy to understand.

\begin{rmk}\label{fpoly}
Assume \autoref{reesdata}. Then $\R(F)=R[\w_1,\ldots,\w_s]=\k[x_0,x_1,\w_1,\ldots,\w_s]$, where $\deg x_i=(1,0)$ and $\deg\w_i=(-\sigma_i,1)$. Viewing $\R(E)$ as a subset of $\R(F)$, we have
\[\begin{bmatrix}T_1&\cdots&T_n\end{bmatrix}=\begin{bmatrix}\w_1&\cdots&\w_s\end{bmatrix}\xi.\]
\end{rmk}

\begin{proof}
Since $F=\bigoplus_{i=1}^sR(\sigma_i)$ is a free module, the Rees algebra $\R(F)$ is the same as the symmetric algebra $\Sym(F)$, which is a polynomial ring $R[\w_1,\ldots,\w_s]$. Each variable $\w_i$ corresponds to one generator of $F$. Since the $i$th generator of $F$ has degree $-\sigma_i$, the variable $\w_i$ has bidegree $(-\sigma_i,1)$. The $R$-module map $\xi:R^n\twoheadrightarrow E\hookrightarrow F$ induces the $R$-algebra map $\R(\xi):S=\R(R^n)\twoheadrightarrow\R(E)\hookrightarrow\R(F)$ which lets us view $\R(E)$ as a submodule of $\R(F)$. Therefore
\[\begin{bmatrix}T_1&\cdots&T_n\end{bmatrix}=\begin{bmatrix}\w_1&\cdots&\w_s\end{bmatrix}\xi.\qedhere\]
\end{proof}

The Rees algebra of $M$ is also not difficult to describe. First, will to define the $R$-algebra $\Scr(\us)$, which we will then show is isomorphic to $\R(M)$ in \autoref{rns}. The ring $\Scr(\us)$ is the coordinate ring of a rational normal scroll if all the $\sigma_i$ are positive; otherwise, it is a cone over a rational normal scroll. In any case, it is a Cohen-Macaulay normal domain of dimension $s+2$ (see \cite{eis05} or \cite{eh87}).

\begin{dfn}\label{dfscroll}
Let $\sigma_1,\ldots,\sigma_s$ be nonnegative integers, and let $V=R[\{v_{i,j}\mid 1\le i\le s,\ 0\le j\le\sigma_i\}]$. Let $\Gamma$ be the matrix with entries in $V$:
\[\Gamma=\left[\begin{array}{c|cccc|c|cccc}x_0&v_{1,0}&v_{1,1}&\cdots&v_{1,\sigma_1-1}&\cdots&v_{s,0}&v_{s,1}&\cdots&v_{s,\sigma_s-1}\\x_1&v_{1,1}&v_{1,2}&\cdots&v_{1,\sigma_1}&\cdots&v_{s,1}&v_{s,2}&\cdots&v_{s,\sigma_s}\end{array}\right].\]
Define the ring $\Scr(\us)=\Scr(\sigma_1,\ldots,\sigma_s)=V/I_2(\Gamma)$.
\end{dfn}

\begin{prop}\label{rns}
Assume \autoref{reesdata}. Then
\[\R(M)=R[\{x_0^{\sigma_i-j}x_1^j\w_i\mid 1\le i\le s,\ 0\le j\le\sigma_i\}]\subset R[\w_1,\ldots,\w_s]=\R(F)\]
and $\R(M)\cong\Scr(\us)$.
\end{prop}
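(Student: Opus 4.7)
The plan is to identify $\R(M)$ concretely as a subring of $\R(F)$, then build a presentation by constructing a surjection from $V$ whose kernel contains $I_2(\Gamma)$, and finish by a Krull dimension count to show this containment is an equality.

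For the first description, \autoref{fpoly} gives $\R(F)=R[\w_1,\ldots,\w_s]$, with $F$ sitting in bidegree $(*,1)$ as the free $R$-module with basis $\w_1,\ldots,\w_s$. Under this identification, the submodule $M=\bigoplus_{i=1}^s\m^{\sigma_i}(\sigma_i)\subset F$ is generated as an $R$-module, in bidegree $(0,1)$, by the elements $x_0^{\sigma_i-j}x_1^j\w_i$ for $1\le i\le s$ and $0\le j\le\sigma_i$. Since the Rees algebra of $M$ is generated over $R$ by its bidegree $(*,1)$ part (which is $M$ itself), this immediately yields the stated description $\R(M)=R[\{x_0^{\sigma_i-j}x_1^j\w_i\}]$.

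Next, I would define an $R$-algebra homomorphism $\Psi:V\to\R(F)$ by $v_{i,j}\mapsto x_0^{\sigma_i-j}x_1^j\w_i$, so that $\im\Psi=\R(M)$ by the previous paragraph. A routine computation shows $I_2(\Gamma)\subseteq\ker\Psi$: the $2\times2$ minor of $\Gamma$ from the first column and the column indexed by $(i,j)$ evaluates under $\Psi$ to $x_0\cdot x_0^{\sigma_i-j-1}x_1^{j+1}\w_i - x_1\cdot x_0^{\sigma_i-j}x_1^j\w_i=0$, and the minor of two columns indexed by $(i,j)$ and $(i',j')$ evaluates to an expression of the form $x_0^{a}x_1^{b}\w_i\w_{i'}-x_0^{a}x_1^{b}\w_i\w_{i'}=0$ because consecutive rows of $\Gamma$ differ only by trading an $x_0$ for an $x_1$. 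Thus $\Psi$ induces a surjection $\bar\Psi:\Scr(\us)\twoheadrightarrow\R(M)$.

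To conclude, I would compare dimensions. Since $\R(M)$ is a subring of the polynomial ring $R[\w_1,\ldots,\w_s]$ it is a domain, and by \cite[Proposition~2.2]{suv03} it has Krull dimension $\dim R+\rank M=2+s$. The ring $\Scr(\us)$ is a Cohen-Macaulay domain of dimension $s+2$, as noted before \autoref{dfscroll}. The ideal $\mathfrak{q}:=\ker\bar\Psi$ is prime (as $\R(M)$ is a domain), and $\dim\Scr(\us)/\mathfrak{q}=s+2=\dim\Scr(\us)$; Cohen-Macaulayness gives $\Ht(\mathfrak{q})+\dim\Scr(\us)/\mathfrak{q}=\dim\Scr(\us)$, hence $\Ht(\mathfrak{q})=0$, forcing $\mathfrak{q}=0$ since $\Scr(\us)$ is a domain. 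The only non-routine step is bookkeeping for the minor computation, since the matrix $\Gamma$ has $1+\sum_i\sigma_i$ columns split into $s+1$ blocks of different sizes; the closing dimension argument is essentially forced once $\Scr(\us)$ is known to be a CM domain of the right dimension.
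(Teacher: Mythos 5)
Your proposal is correct and follows essentially the same route as the paper: identify $\R(M)$ inside $\R(F)$ via the degree-one generators of $M$, map $V$ onto it by $v_{i,j}\mapsto x_0^{\sigma_i-j}x_1^j\w_i$, check that $I_2(\Gamma)$ dies, and conclude by comparing Krull dimensions of the two domains. Your explicit verification of the minors and the use of Cohen--Macaulayness to force $\Ht(\ker\bar\Psi)=0$ are just slightly more detailed versions of the steps the paper leaves as ``easy to see'' and ``a surjection between domains of the same dimension.''
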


\begin{proof}
From \autoref{fpoly}, $\R(F)=R[\w_1,\ldots,\w_s]$ where $\deg x_i=(1,0)$ and $\deg\w_i=(-\sigma_i,1)$. Thus the monomials in $\R(F)$ of $x$-degree 0 are $x_0^{\sigma_i-j}x_1^j\w_i$, so \[\R(M)=\R(F)_{\ge0,*}=R[\{x_0^{\sigma_i-j}x_1^j\w_i\mid 1\le i\le s,\ 0\le j\le\sigma_i\}].\]

Now define $V$ and $\Gamma$ as in \autoref{dfscroll}, and define a surjective $R$-linear map $\theta:V\twoheadrightarrow\R(M)$ by $v_{i,j}\mapsto x_0^{\sigma_i-j}x_1^j\w_i$. It is easy to see that $I_2(\Gamma)\subset\ker\theta$, so $\theta$ descends to a map $\overline{\theta}:\Scr(\us)=V/I_2(\Gamma)\twoheadrightarrow\R(M)$. Since $\dim\Scr(\us)=s+2=2+\rank M=\dim\R(M)$, $\overline{\theta}$ is a surjective map between domains of the same dimension, hence it is an isomorphism.
\end{proof}

Because $\R(F)$ and $\R(M)$ have straightforward descriptions, we want to compare $\R(E)$ to these rings. The first step is to compare the modules $E$ and $F$ by looking at $F/E$. In the remark below, we see that $F/E$ is Artinian and is zero in degree $\ge d_m-1$. This will be used in \autoref{remint} to show that $\R(F)$ and $\R(E)$ agree in these degrees. As another consequence, the resolution \eqref{eq:res1} gives an easy way of computing the matrix $\xi$, as the transpose of the syzygy matrix of $\varphi_m\tr$. We need to know $\xi$ in order to make the homomorphism $\R(E)\hookrightarrow\R(F)$ explicit (\autoref{fpoly}).

\begin{lem}\label{fmode}
Assume \autoref{reesdata}.
\begin{itemize}
\item[(a)] $F/E$ has free resolution
\begin{equation}\label{eq:res2}0\to\bigoplus_{i=1}^mR(-d_i)\xrightarrow{\varphi_m}R^n\xrightarrow{\xi}F=\bigoplus_{i=1}^sR(\sigma_i)\to F/E\to0.\end{equation}
\item[(b)] $F/E$ is Artinian.
\item[(c)] $(F/E)_{\ge d_m-1}=0$.
\item[(d)] $\Ext^2_R(F/E,R)$ has free resolution
\begin{equation}\label{eq:res1}0\to\bigoplus_{i=1}^sR(-\sigma_i)\xrightarrow{\xi\tr}R^n\xrightarrow{\varphi_m\tr}\bigoplus_{i=1}^mR(d_i)\to\Ext^2_R(F/E,R)\to0.\end{equation}
\item[(e)] $s=n-m$ and $\sum_{i=1}^s\sigma_i=\sum_{i=1}^md_i$.
\end{itemize}
\end{lem}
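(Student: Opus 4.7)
The plan is to handle the parts in sequence, since each builds on the previous. The common input is that $E \hookrightarrow F$ becomes an isomorphism after inverting any element of $\m$, making $F/E$ Artinian; once this is in hand, the rest follows from standard manipulations over the $2$-dimensional regular ring $R$.

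For (a), the map $\xi\colon R^n \to F$ factors as $R^n \twoheadrightarrow E \hookrightarrow F$ by construction, so $\im\xi = E$ and $\coker\xi = F/E$. The kernel of $R^n \twoheadrightarrow E$ is $\im\varphi_m$ by \eqref{eq:resem}, and injectivity of $E \hookrightarrow F$ gives $\ker\xi = \im\varphi_m$; injectivity of $\varphi_m$ itself is already in \autoref{efree}. For (b), \autoref{efree} tells us $E_w$ is free (hence reflexive) for every $w \in \m$, so the natural map $E_w \to F_w$ is an isomorphism; thus $\Supp(F/E) \subseteq \{\m\}$ and $F/E$ is Artinian.

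Parts (c) and (d) then follow from (a) and (b) by standard regularity and duality. For (c), I would apply the bound $\reg(F/E) \le \max_{i,j}(a_{ij} - i)$ to the resolution from (a): the homological-degree $0$, $1$, and $2$ contributions are $-\sigma_i \le 0$, $-1$, and $d_i - 2 \le d_m - 2$ respectively, so $\reg(F/E) \le d_m - 2$. Since $F/E$ is Artinian, its regularity equals its top nonzero degree, yielding $(F/E)_{\ge d_m - 1} = 0$. For (d), Artinianness gives $\grade(F/E) = 2$ and $\pd(F/E) \le 2$ is visible from the resolution, so $\Ext^i_R(F/E, R)$ vanishes for $i \ne 2$. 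Dualizing the resolution of (a) into $R$ therefore produces a complex whose only nonzero cohomology is $\Ext^2_R(F/E, R)$ at the rightmost position, and reading off the terms yields \eqref{eq:res1}.

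For (e), the rank identity $s = n - m$ comes from the alternating sum of ranks in \eqref{eq:res2}. For $\sum \sigma_i = \sum d_j$, I would use Hilbert series: from $0 \to E \to F \to F/E \to 0$,
\[ H_{F/E}(t) = \frac{\sum_i t^{-\sigma_i} - n + \sum_j t^{d_j}}{(1-t)^2}, \]
and $H_{F/E}(t)$ is a Laurent polynomial by (b), so the numerator must vanish to order $\ge 2$ at $t = 1$. Setting the numerator and its derivative to zero at $t = 1$ recovers $s = n - m$ and $\sum \sigma_i = \sum d_j$ simultaneously. The mildly delicate step is the Ext-vanishing in (d), but this is automatic from the grade and projective dimension bounds once Artinianness is known from (b); nothing else in the argument should offer much resistance.
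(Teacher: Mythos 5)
Your argument tracks the paper's own proof almost step for step: (a) splices \eqref{eq:resem} with $0\to E\to F\to F/E\to0$; (b) localizes at elements of $\m$ using \autoref{efree}; (d) uses that $F/E$ is perfect of grade $2$ and dualizes the resolution; and (e) is the paper's Hilbert-polynomial computation recast as a Hilbert-series computation, an equivalent bookkeeping device. All of that is sound.

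The one step that does not close as written is (c). The bound $\reg(F/E)\le\max_{i,j}(a_{ij}-i)$ includes the homological-degree-$0$ contribution $\max_i(-\sigma_i)$, and this equals $0$ whenever some $\sigma_i=0$ --- a case \autoref{reesdata} explicitly allows (it is exactly $r<s$, and it occurs in the paper's own examples, e.g.\ \autoref{exgen0} and the almost linearly presented case). If in addition $d_m=1$, the maximum of your three contributions is $0$, not $d_m-2=-1$, so the asserted inequality $\reg(F/E)\le d_m-2$ does not follow from the bound you invoke, and you only get $(F/E)_{\ge1}=0$ rather than the required $(F/E)_{\ge0}=0$. (Concretely: $n=3$, $m=1$, $d_1=1$ forces $\sigma_1=1$, $\sigma_2=0$, and the crude bound gives $0$.) Two easy repairs: either use that $F/E$ has finite length and $\pd(F/E)=2$, so that by local duality its top nonzero degree is $b-2$ where $b$ is the largest twist in the \emph{last} term of its minimal free resolution --- and that last term is still $\bigoplus_{i=1}^mR(-d_i)$, since $\varphi_m$ has entries of positive degree and any non-minimality of \eqref{eq:res2} is confined to $\xi$ --- which yields $\reg(F/E)=d_m-2$ on the nose, as the paper asserts; or simply note that when $d_m=1$ one has $\dim_\k(F/E)_0=\dim_\k F_0-\dim_\k E_0=\bigl(\sum_i\sigma_i+s\bigr)-n=\sum_i(d_i-1)=0$, which supplies the one missing degree. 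With either patch the proof is complete and coincides with the paper's.
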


\begin{proof}
(a) This resolution is obtained by combining the short exact sequence \eqref{eq:resem} with the short exact sequence
\[0\to E\to F\to F/E\to 0.\]

(b) If $\p\subset R$ is a prime ideal with $\p\neq\m$, then $E_\p$ is free by \autoref{efree}. Therefore $F_\p=E_\p^{**}=E_\p$, so $(F/E)_\p=0$. Hence $\Supp(F/E)\subset\{\m\}$, meaning $F/E$ is Artinian.

(c) Since $1\le d_1\le\cdots\le d_m$, it follows from \eqref{eq:res2} that $\reg(F/E)=d_m-2$. Since $F/E$ is Artinian, this means that $(F/E)_{\ge d_m-1}=0$.

(d) Since $F/E$ is Artinian, $\grade(F/E)=2$. By \eqref{eq:res2}, $\pd(F/E)\le2$, therefore $F/E$ is perfect of grade 2. Thus the resolution of $\Ext^2_R(F/E,R)$ is dual to the resolution of $F/E$ in \eqref{eq:res2}.

(e) From \eqref{eq:res2}, we see that the Hilbert polynomial of $F/E$ is
\begin{align*}
HP_{F/E}(t)&=\sum_{i=1}^s(t+\sigma_i+1)-n(t+1)+\sum_{i=1}^mR(t-d_i+1)\\
&=(s+m-n)t+(s+m-n+\sum_{i=1}^s\sigma_i-\sum_{i=1}^md_i).
\end{align*}
On the other hand, $F/E$ is Artinian, so its Hilbert polynomial must be zero. Therefore $s=n-m$ and $\sum_{i=1}^s\sigma_i=\sum_{i=1}^md_i$.
\end{proof}

The next theorem relates the Rees algebras $\R(E)$, $\R(M)$, and $\R(F)$. It allows us, when looking in large $x$-degrees, to pass from $\R(E)$ to $\R(M)$, where computation is easier due to the structure of $\R(M)$ given in \autoref{rns}.

\begin{thm}\label{remint}
Assume \autoref{reesdata}.
\begin{itemize}
\item[(a)] $\R(E)_{\ge d_m-1,*}=\R(M)_{\ge d_m-1,*}=\R(F)_{\ge d_m-1,*}$.
\item[(b)] $\R(M)$ is the integral closure of $\R(E)$ in its field of fractions.
\item[(c)] $E$ is a reduction of $M$ in $F$.
\end{itemize}
\end{thm}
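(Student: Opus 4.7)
My plan is to prove (a), (b), (c) in that order, with (a) being the technical core; parts (b) and (c) will follow from (a) by standard arguments about integral extensions.

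For (a), the right-hand equality $\R(M)_{\ge d_m-1,*} = \R(F)_{\ge d_m-1,*}$ will follow from the stronger identity $\R(M) = \R(F)_{\ge 0,*}$. The inclusion $\R(M)\subseteq \R(F)_{\ge 0,*}$ is clear since $M = F_{\ge 0}$ and multiplication in $\R(F) = R[\w_1,\ldots,\w_s]$ preserves $x$-degree sums. For the reverse, given a monomial $x_0^a x_1^b \w^\alpha \in \R(F)_{i,*}$ with $i \ge 0$, write $\w^\alpha = \w_{k_1}\cdots\w_{k_{|\alpha|}}$; since $a+b = i + \sum_\ell \sigma_{k_\ell} \ge \sum_\ell \sigma_{k_\ell}$, I can split $x_0^a x_1^b = u_1\cdots u_{|\alpha|}$ with $\deg u_\ell \ge \sigma_{k_\ell}$, placing each factor $u_\ell \w_{k_\ell}$ in $M$ and the full product in $\R(M)$.

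For the first equality $\R(E)_{\ge d_m-1,*} = \R(M)_{\ge d_m-1,*}$, the key input is \autoref{fmode}(c), which says $E_t = F_t = M_t$ for $t \ge d_m-1$. In $T$-degree $j=1$ this is precisely the desired equality. For $j \ge 2$, by \autoref{rns} $\R(M)$ is generated as an $R$-algebra in bidegree $(0,1)$, so any element of $\R(M)_{d_m-1,j}$ has the form $\sum_\ell y_\ell\, m_{1,\ell}\cdots m_{j,\ell}$ with $y_\ell\in R_{d_m-1}$ and $m_{k,\ell}\in M_0 = F_0$. Within each summand, the partial product $y_\ell m_{1,\ell}$ lies in $F_{d_m-1} = E_{d_m-1}\subset E \subset \R(E)$. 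The principal obstacle—and the main difficulty of the whole theorem—is to conclude that the entire product $(y_\ell m_{1,\ell})\,m_{2,\ell}\cdots m_{j,\ell}$, whose remaining factors sit in $F_0$ but not necessarily in $E_0$, lies in $\R(E)$. I would attack this inductively on $j$, exploiting the scroll identities in $\R(M)\cong\Scr(\us)$ from \autoref{dfscroll} to rewrite products of $\w$-monomials of $x$-degree $\ge d_m-1$ as $R$-linear combinations of products of elements of $E$.

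For (b), once (a) is in hand, each graded piece $(\R(M)/\R(E))_{*,j}$ is concentrated in $x$-degrees $\le d_m-2$ and hence finite-dimensional over $\k$. The finitely many generators of $\R(M)$ over $\R(E)$, chosen as lifts of a basis of $M_0/E_0$, each satisfy a monic polynomial relation over $\R(E)$, derivable from $R_{d_m-1}\cdot M_0 \subseteq E$; hence $\R(M)$ is module-finite, and in particular integral, over $\R(E)$. Since $\R(M)\cong\Scr(\us)$ is a normal domain (as recorded before \autoref{dfscroll}) and since $\R(E)$ and $\R(M)$ share their fraction field (by \autoref{efree}, both agree with $\R(F)$ after inverting any $w\in\m$), I conclude $\R(M) = \overline{\R(E)}$. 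For (c), the property that $E$ is a reduction of $M$ in $F$ amounts, per \cite{suv03}, to integrality of $\R(E)\hookrightarrow \R(M)$, which is precisely (b).
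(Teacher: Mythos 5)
Your parts (b) and (c) are essentially sound and close to the paper's own argument: integrality via a monic relation obtained from $R_{d_m-1}\cdot M_0\subseteq E$ (the paper packages this as the determinant trick applied to the faithful ideal $\R(E)_{\ge d_m-1,*}$), normality of $\R(M)\cong\Scr(\us)$, identification of fraction fields after inverting $w\in\m$, and the standard equivalence between reductions of modules and integrality of the Rees algebra extension. But part (a) --- which you yourself call the technical core --- is not actually proved. For $j\ge2$ you reduce to showing that $y\,m_1\cdots m_j\in\R(E)$ for $y\in R_{d_m-1}$ and $m_k\in M_0$, correctly observe that absorbing $y$ into $m_1$ leaves the remaining factors $m_2,\ldots,m_j$ in $F_0$ but not necessarily in $E_0$, and then defer the resolution to an unspecified induction ``exploiting the scroll identities.'' That deferral is the entire difficulty. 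The scroll relations (the minors of $\Gamma$ in \autoref{dfscroll}) only rewrite monomials in the $v_{i,j}$ among themselves inside $\R(M)$; they carry no information about membership in $\R(E)$, since $E_0$ is determined by the matrix $\xi$ and not by the scroll structure. So the proposed route does not close the gap.

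The missing idea is a regrouping that uses the fact that $\R(E)$, being a quotient of $S=R[T_1,\ldots,T_n]$, is standard bigraded, so that $\R(E)_{i,j-1}=R_i\,\R(E)_{0,j-1}$. Concretely: by induction on $j$, $y\,m_2\cdots m_j\in\R(M)_{d_m-1,j-1}=\R(E)_{d_m-1,j-1}=R_{d_m-1}\,\R(E)_{0,j-1}$, so write it as $\sum_k y_k e_k$ with $y_k\in R_{d_m-1}$ and $e_k\in\R(E)_{0,j-1}$; then $y\,m_1\cdots m_j=\sum_k(y_k m_1)\,e_k$, and $y_k m_1\in R_{d_m-1}M_0\subseteq M_{d_m-1}=E_{d_m-1}\subset\R(E)$ by \autoref{fmode}(c). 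This is precisely the chain in the paper's proof,
\[\R(M)_{i,j}=\R(M)_{0,1}\R(E)_{i,j-1}=\R(M)_{0,1}\R(E)_{i,0}\R(E)_{0,j-1}\subset\R(M)_{i,1}\R(E)_{0,j-1}=\R(E)_{i,1}\R(E)_{0,j-1}\subset\R(E)_{i,j},\]
where the key move is to transfer the $x$-degree onto the single $\R(M)_{0,1}$ factor so that the $j=1$ case applies. Without this step your argument for (a) --- and hence the theorem, since (b) quotes (a) --- is incomplete.
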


\begin{proof}

(a) The second equality is true because $\R(M)=\R(F_{\ge0})=\R(F)_{\ge0,*}$. For the first equality, we show by induction on $j\ge0$ that $\R(E)_{i,j}=\R(M)_{i,j}$ for all $i\ge d_m-1$. If $j=0$, this is true because $\R(E)_{*,0}=R=\R(M)_{*,0}$. Let $j=1$ and let $i\ge d_m-1$. As $E$ and $M$ are torsionfree $R$-modules, $\R(E)_{*,1}=E$ and $\R(M)_{*,1}=M$. But $E_{\ge d_m-1}=F_{\ge d_m-1}=M_{\ge d_m-1}$ by \autoref{fmode}(c), therefore $\R(E)_{\ge d_m-1,1}=\R(M)_{\ge d_m-1,1}$. In particular, $\R(E)_{i,1}=\R(M)_{i,1}$ since $i\ge d_m-1$. Now suppose $j>1$ and $i\ge d_m-1$. Certainly $\R(E)_{i,j}\subset\R(M)_{i,j}$. On the other hand,
\begin{align*}
\R(M)_{i,j}&=\R(M)_{0,1}\R(M)_{i,j-1}\\
&=\R(M)_{0,1}\R(E)_{i,j-1}\text{ by induction}\\
&=\R(M)_{0,1}\R(E)_{i,0}\R(E)_{0,j-1}\\
&\subset\R(M)_{i,1}\R(E)_{0,j-1}\\
&=\R(E)_{i,1}\R(E)_{0,j-1}\text{ by the case $j=1$}\\
&=\R(E)_{i,j}.
\end{align*}
Therefore $\R(E)_{i,j}=\R(M)_{i,j}$.

(b) Since $\R(M)_{\ge d_m-1,*}=\R(E)_{\ge d_m-1,*}$ is a faithful $\R(M)$-module which is a finitely generated $\R(E)$-module, the determinant trick (\cite[Lemma~2.1.9]{sh06}) shows that the extension $\R(E)\subset\R(M)$ is integral. Since $\R(M)\cong\Scr(\us)$ by \autoref{rns}, $\R(M)$ is normal. It remains to show that $\R(E)$ and $\R(M)$ have the same field of fractions. For each $1\le i\le s$, define $p_i=x_0^{\sigma_i+d_m-1}\w_i$. Then $\deg p_i=(d_m-1,1)$, so $p_i\in\R(E)$ by (a). Also, $x_0\in R\subset\R(E)$, thus $\w_i=x_0^{-\sigma_i-d_m+1}p_i\in\Quot(\R(E))$. Since $x_0,x_1\in\Quot(\R(E))$ and $\w_i\in\Quot(\R(E))$ for all $i$, we get $\Quot(\R(E))\supset\k(x_0,x_1,\w_1,\ldots,\w_s)=\Quot(\R(F))\supset\Quot(\R(M))$, therefore $\Quot(\R(E))=\Quot(\R(M))$. Hence $\R(M)$ is the integral closure of $\R(E)$ in its field of fractions.

(c) This is equivalent to $\R(M)$ being integral over $\R(E)$ (\cite[Theorem~16.2.3]{sh06}), which we already proved in (b).
\end{proof}

\section{Equations of Rees algebras}

Now that we have some understanding of $\R(E)$, we wish to compute the ideal $\K_{m+1}\R(E)$, as discussed in Section 1. First, in \autoref{kre}, we use the fact that $\R(E)\subset\R(M)$ is an integral extension of rings to show that $\K_{m+1}\R(E)$ is the contraction of the $\R(M)$-ideal $(g_{m+1}\R(F))_{\ge0,*}=g_{m+1}\R(F)_{\ge-d_{m+1},*}$. \autoref{rfgen} gives a generating set for ideals of this form. We will put these together to obtain the $x$-degree $\ge d_m$ part of a minimal generating set of $\K_{m+1}\R(E)$ in \autoref{kregen}. While these generators are given in terms of the variables $w_1,\ldots,w_s\in\R(M)$, we desire an expression for the generators as elements of $S=R[T_1,\ldots,T_n]$. We cannot give a closed form, but in \autoref{kregenalg} we give a recursive algorithm for computing the generators of $\K_{m+1}\R(E)$ with $x$-degree $\ge d_m$.

\begin{prop}\label{kre}
Assume \autoref{reesdata}. Then \[\K_{m+1}\R(E)=(\K_{m+1}\R(M))\cap\R(E)=(g_{m+1}\R(F)_{\ge-d_{m+1},*})\cap\R(E).\]
\end{prop}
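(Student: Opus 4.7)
The plan is to treat the two equalities separately. For the first equality $\K_{m+1}\R(E)=\K_{m+1}\R(M)\cap\R(E)$, I invoke Lying Over. The ideal $\K_{m+1}$ is prime in $S$ (as noted prior to \autoref{idealdata}), so $\K_{m+1}\R(E)=\K_{m+1}/\K_m$ is prime in $\R(E)=S/\K_m$; and $\R(E)\subset\R(M)$ is integral by \autoref{remint}(b). Any prime $\mathfrak{Q}\subset\R(M)$ with $\mathfrak{Q}\cap\R(E)=\K_{m+1}\R(E)$ automatically contains $\K_{m+1}\R(M)$, since this latter ideal is generated in $\R(M)$ by elements of $\K_{m+1}\R(E)\subset\mathfrak{Q}$. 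Contracting yields $\K_{m+1}\R(M)\cap\R(E)\subset\K_{m+1}\R(E)$, and the reverse inclusion is immediate.

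For the second equality, first note that $g_{m+1}\R(F)_{\ge-d_{m+1},*}=g_{m+1}\R(F)\cap\R(M)$ (both consist of the multiples of $g_{m+1}$ whose $x$-degree is $\ge 0$, via $\R(M)=\R(F)_{\ge0,*}$ from \autoref{rns}), so the claim reduces to $\K_{m+1}\R(E)=g_{m+1}\R(F)\cap\R(E)$. The preparatory step is the saturation identity
\[\K_{m+1}\R(E)=g_{m+1}\R(E):_{\R(E)}\m^\infty,\]
which follows from \autoref{resat} by rewriting $\K_{m+1}=(g_{m+1}S+\K_m):_S\m^\infty$ (using $g_1,\ldots,g_m\in\K_m$ and the $\m$-saturatedness of $\K_m$) and then passing to the quotient $\R(E)=S/\K_m$.

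For the forward inclusion, if $a=g_{m+1}h\in g_{m+1}\R(F)\cap\R(E)$ with $h\in\R(F)$, then $\deg_x h\ge -d_{m+1}$, so for $N\ge d_{m+1}+d_m-1$ one has $\m^N h\subset\R(F)_{\ge d_m-1,*}=\R(E)_{\ge d_m-1,*}\subset\R(E)$ by \autoref{remint}(a); hence $\m^N a\subset g_{m+1}\R(E)$ and $a\in\K_{m+1}\R(E)$. For the reverse inclusion, given $a\in\K_{m+1}\R(E)$ the saturation identity gives $\m^N a\subset g_{m+1}\R(F)$ for some $N$, and the main point is that in the polynomial ring $\R(F)=R[\w_1,\ldots,\w_s]$ the principal ideal $g_{m+1}\R(F)$ is unmixed of height one while $\m\R(F)=(x_0,x_1)$ has height two; no associated prime of $g_{m+1}\R(F)$ can contain $\m\R(F)$, so $g_{m+1}\R(F):_{\R(F)}\m^\infty=g_{m+1}\R(F)$, forcing $a\in g_{m+1}\R(F)\cap\R(E)$. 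The essential obstacle is that the three saturations (in $\R(E)$, $\R(M)$, and $\R(F)$) do not agree on their own; the argument sidesteps this by testing divisibility inside the UFD $\R(F)$ and then using the large-degree coincidence from \autoref{remint}(a) to transport the conclusion back to $\R(E)$.
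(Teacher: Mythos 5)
Your proof is correct and follows essentially the same route as the paper's: Lying Over for the first equality, the saturation description of $\K_{m+1}$ from \autoref{resat}, and the key observation that $g_{m+1}\R(F)$ is already $\m$-saturated because it is unmixed of height one in the polynomial ring $\R(F)$ while $\m\R(F)$ has height two. The only organizational difference is that you compute the saturation in $\R(E)$ rather than in $\R(M)$, which costs you an appeal to \autoref{remint}(a) for the containment $g_{m+1}\R(F)\cap\R(E)\subset\K_{m+1}\R(E)$, where the paper gets by with the cheaper inclusion $\m^{d_{m+1}}\bigl(g_{m+1}\R(F)_{\ge-d_{m+1},*}\bigr)\subset g_{m+1}\R(M)$ and thereby proves the slightly stronger intermediate identity $\K_{m+1}\R(M)=g_{m+1}\R(F)_{\ge-d_{m+1},*}$ as ideals of $\R(M)$.
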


\begin{proof}
According to \autoref{remint}(b), $\R(E)\subset\R(M)$ is an integral extension of rings. Since $\K_{m+1}\R(E)$ is a prime ideal, the first equality is true by the lying over property of integral extensions.

By \autoref{resat}, $\K_m=(g_1,\ldots,g_m):_S\m^\infty$. Since $\K_m$ is the ideal of defining equations of $\R(E)$, this means $g_1,\ldots,g_m$ are zero in $\R(E)$, and thus also in $\R(M)$. \autoref{resat} also gives $\K_{m+1}=(g_1,\ldots,g_{m+1}):_S\m^\infty$. Therefore
\[\K_{m+1}\R(M)=(g_1,\ldots,g_{m+1})\R(M):_{\R(M)}\m^\infty=g_{m+1}\R(M):_{\R(M)}\m^\infty.\]
It remains to show that $g_{m+1}\R(M):_{\R(M)}\m^\infty=g_{m+1}\R(F)_{\ge-d_{m+1},*}$.

The containment ``$\supset$'' comes because \[\m^{d_{m+1}}(g_{m+1}\R(F)_{\ge-d_{m+1},*})\subset g_{m+1}\R(F)_{\ge0,*}=g_{m+1}\R(M),\] which means \[g_{m+1}\R(F)_{\ge-d_{m+1},*}\subset g_{m+1}\R(M):_{\R(M)}\m^{d_{m+1}}\subset g_{m+1}\R(M):_{\R(M)}\m^\infty.\]
For the containment ``$\subset$'', first note that \[g_{m+1}\R(M):_{\R(M)}\m^\infty\subset(g_{m+1}\R(F):_{\R(F)}\m^\infty)\cap\R(M).\]
But since $\R(F)$ is a polynomial ring, $g_{m+1}\R(F)$ is an unmixed ideal of height 1, while $\m\R(F)$ is a prime ideal of height 2, so $g_{m+1}\R(F):_{\R(F)}\m^\infty=g_{m+1}\R(F)$. Thus
\[g_{m+1}\R(M):_{\R(M)}\m^\infty\subset(g_{m+1}\R(F))\cap\R(M)=(g_{m+1}\R(F))_{\ge0,*}=g_{m+1}\R(F)_{\ge-d_{m+1},*}.\]
This shows that $g_{m+1}\R(M):_{\R(M)}\m^\infty=g_{m+1}\R(F)_{\ge-d_{m+1},*}$.
\end{proof}

According to \autoref{kre}, there are two steps to computing $\K_{m+1}\R(E)$. First, we must compute $g_{m+1}\R(F)_{\ge-d_{m+1},*}$, then we must contract to $\R(E)$. We begin with the task of determining the generators of $\R(F)_{\ge-c,*}$ as an $\R(M)$-module for any $c\ge 0$.

\subsection{Generators of truncations of \texorpdfstring{$\R(F)$}{R(F)}}

Assume \autoref{reesdata}. For any $s$-tuple $\ua=(\alpha_1,\ldots,\alpha_s)\in\N^s$, write $\ut^{\ua}=\w_1^{\alpha_1}\cdots\w_s^{\alpha_s}\in\R(F)$, and define $\as=\alpha_1\sigma_1+\cdots+\alpha_s\sigma_s$. Then since $\deg_xx_i=1$ and $\deg_x\w_i=-\sigma_i$, we may compute the $x$-degree of any monomial in $\R(F)$ with the formula $\deg_x(x_0^jx_1^k\ut^{\ua})=j+k-\as$.

In what follows, it will be convenient to use the following nonstandard notation: For $h,h'\in\R(F)$, say $h$ divides $h'$ (or write $h\mid h'$) to mean that there exists $\ell\in\R(M)$ such that $h'=\ell h$. Note that this is different from the standard notion of divisibility in $\R(F)$ because we require $\ell$ to have nonnegative degree (that is, to be in $\R(M)$ and not just $\R(F)$).

One more definition will make it easier to work with exponent vectors $\ua\in\N^s$. Recall that $r$ is the unique integer with $1\le r\le s$ such that $\sigma_i>0$ for $i\le r$ and $\sigma_i=0$ for $i>r$. In the definition below, we think of $\ua^+$ as the part of the vector corresponding to those $\sigma_i$ which are positive, and $\ua^0$ as the part corresponding to those $\sigma_i$ which are zero.

\begin{dfn}
Assume \autoref{reesdata}, and let $\ua\in\N^s$. Define $\ua^+=(\alpha_1,\ldots,\alpha_r,0,\ldots,0)\in\N^s$ and $\ua^0=(0,\ldots,0,\alpha_{r+1},\ldots,\alpha_s)\in\N^s$.
\end{dfn}

The following properties are clear from the definition.

\begin{rmk}
Assume \autoref{reesdata}, and let $\ua\in\N^s$.
\begin{itemize}
\item[(a)] $\ua=\ua^++\ua^0$.
\item[(b)] $(\ua^+)^0=0$ and $(\ua^0)^+=0$.
\item[(c)] $\as=\ua^+\cdot\us$.
\end{itemize}
\end{rmk}

The importance of distinguishing $\ua^+$ from $\ua^0$ can be seen in the following lemma, which shows that we may ignore $\w_{r+1},\ldots,\w_s$ for the purpose of computing minimal generators.

\begin{lem}\label{winre}
Assume \autoref{reesdata}. Then after row operations on $\xi$ corresponding to an automorphism of $\k[\w_1,\ldots,\w_s]$ which fixes $\w_1,\ldots,\w_r$, we get $\ut^{\ua}\in\R(E)$ for all $\ua\in\N^s$ with $\ua^+=0$.
\end{lem}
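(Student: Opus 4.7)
The plan is to produce a graded $R$-module automorphism $\phi\colon F\to F$ that fixes the first $r$ summands of $F=\bigoplus_{i=1}^{s}R(\sigma_i)$ and maps each of the remaining $s-r$ standard basis vectors into $E$. The induced graded $R$-algebra automorphism of $\Sym(F)=\R(F)=R[\w_1,\ldots,\w_s]$ then fixes $\w_1,\ldots,\w_r$, and corresponds to row operations on $\xi$ that leave rows $r+1,\ldots,s$ unchanged while subtracting $R_{\sigma_k}$-multiples of those rows from row $k$ for each $k\le r$. Once the new $\w_j$ (for $j>r$) lies in $E\subseteq\R(E)$, any monomial $\ut^{\ua}$ with $\ua^+=0$ is a product of such $\w_j$'s and therefore lies in $\R(E)$.

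Constructing $\phi$ reduces to the following concrete task. Let $F^+:=\bigoplus_{i\le r}R(\sigma_i)$ and let $\pi\colon F\twoheadrightarrow F/F^+=R^{s-r}$ be the projection onto the last $s-r$ summands. It suffices to show $\pi(E_0)=\k^{s-r}$. Granted this, I would lift each standard basis vector $e_{j-r}\in\k^{s-r}$ to some $v_j\in E_0$; then $u_j:=v_j-\w_j$ lies in $F^+_0$, and setting $\phi(\w_i)=\w_i$ for $i\le r$ and $\phi(\w_j)=v_j$ for $j>r$ yields the desired automorphism (its matrix is block upper-triangular with identity diagonal blocks, hence invertible).

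The crux is the claim $\pi(E_0)=\k^{s-r}$, which I would establish via a rank computation. By \autoref{fmode}(b), $F/E$ is Artinian; localizing at the zero ideal gives $E_{(0)}=F_{(0)}$, so $E\cap F^+$ has the same rank as $F^+$, namely $r$. The short exact sequence $0\to E\cap F^+\to E\to\pi(E)\to 0$ then yields $\rank\pi(E)=\rank E-r=s-r$. On the other hand, $E$ is generated in degree $0$ by \eqref{eq:resem}, so $\pi(E)=R\cdot\pi(E_0)$; and the $R$-submodule of $R^{s-r}$ generated by a $\k$-subspace $V\subseteq\k^{s-r}=(R^{s-r})_0$ is canonically isomorphic to $R\otimes_\k V\cong R^{\dim_\k V}$, of rank $\dim_\k V$. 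Comparing with $\rank\pi(E)=s-r$ forces $\dim_\k\pi(E_0)=s-r$, i.e.\ $\pi(E_0)=\k^{s-r}$.

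The main obstacle is the rank computation above, specifically the translation from the $R$-rank of $\pi(E)$ to the $\k$-dimension of its degree-$0$ generating space. Everything else (lifting basis vectors, defining $\phi$, identifying it with row operations on $\xi$, and passing from $\w_j\in\R(E)$ to arbitrary $\ut^{\ua}$ with $\ua^+=0$) is formal.
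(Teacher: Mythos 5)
Your argument is correct and is essentially the paper's proof in coordinate-free form: both change the basis of $F$, fixing the positive-degree summands, so that the degree-zero basis vectors land in $E_0$, which is possible precisely because $E_0\to(F/F^+)_0=\k^{s-r}$ is surjective. The only real difference is how that surjectivity is justified --- you use a rank count on $\pi(E)$ coming from $F/E$ being torsion, whereas the paper deduces the $\k$-linear independence of the last $s-r$ (scalar) rows of $\xi$ from $I_s(\xi)\neq 0$ and then realizes your lifts $v_j$ by explicit column and row operations on $\xi$ --- and both versions of this step ultimately rest on the same input from \autoref{fmode}, that $F/E$ is Artinian.
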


\begin{proof}
By definition of $r$, for every $i>r$ we have $\sigma_i=0$. By \eqref{eq:res2}, this means the last $s-r$ rows of $\xi$ have entries in $\k$. Now by \autoref{fmode}(a,b), $\grade I_s(\xi)=\grade\Fitt_0(F/E)=\grade\ann(F/E)=2$, so $I_s(\xi)\neq0$. Thus there is an invertible $n\times n$ matrix $\chi$ with entries in $\k$ such that \[\xi\chi=\begin{bmatrix}A&B\\0&I_{s-r}\end{bmatrix}\] where $I_{s-r}$ is the $(s-r)\times(s-r)$ identity matrix, and $A$ and $B$ are arbitrary matrices of sizes $r\times(n-s+r)$ and $r\times(s-r)$, respectively. We may perform row operations on $\xi$ that involve subtracting multiples of the last $s-r$ rows from the first $r$ rows to assume that $B=0$. Such row operations correspond to automorphisms of $\k[\w_1,\ldots,\w_s]$ fixing $\w_1,\ldots,\w_r$. Now \autoref{fpoly} yields
\[\begin{bmatrix}T_1&\cdots&T_n\end{bmatrix}\chi=\begin{bmatrix}\w_1&\cdots&\w_s\end{bmatrix}\xi\chi=\begin{bmatrix}\w_1&\cdots&\w_s\end{bmatrix}\begin{bmatrix}A&0\\0&I_{s-r}\end{bmatrix},\] from which we get $T_{n-s+i}=\w_i$ for all $r<i\le s$. Because all $T_j\in\R(E)$, this means $\w_{r+1},\ldots,\w_s\in\R(E)$. Therefore if $\ua\in\N^s$ has $\ua^+=0$, meaning $\ua=(0,\ldots,0,\alpha_{r+1},\ldots,\alpha_s)$, we get $\ut^{\ua}=\w_{r+1}^{\alpha_{r+1}}\cdots\w_s^{\alpha_s}\in\R(E)$.
\end{proof}

Now we are ready to give minimal generating sets for $\R(M)$-modules of the form $\R(F)_{\ge-c,*}$.

\begin{dfn}\label{dfrfgen}
Assume \autoref{reesdata}, and fix $c\ge 0$. Set
\[\Lambda_c=\{\ua\in\N^s\mid\ua^0=0\text{ and }\as\ge c\}.\]
For $1\le i\le r$, let
\[\Omega_{c,i}=\{\ua\in\N^s\mid\alpha_i>0\text{ and }\alpha_j=0\text{ for }j>i\text{ and }c\le\as<c+\sigma_i\}\subset\Lambda_c\]
and set $\Omega_c=\bigcup_{i=1}^r\Omega_{c,i}\subset\Lambda_c$. Define
\[\mathcal{A}_c=\{\ut^{\ua}\mid\ua\in\N^s\text{ and }\ua^0=0\text{ and }\as<c\}\]
and
\[\mathcal{B}_c=\{x_0^jx_1^k\ut^{\ua}\mid\ua\in\Omega_c\text{ and }j+k=\as-c\}.\]
\end{dfn}

We will see in \autoref{rfgen} that $\Ac\cup\Bc$ is a minimal generating set for $\R(F)_{\ge-c,*}$. Observe that for any monomial $h\in\mathcal{A}_c$, we have $-c<\deg_xh\le0$, while any monomial $h\in\mathcal{B}_c$ has $\deg_xh=-c$. In fact, the set $\Ac$ consists of all monomials of $x$-degree $>-c$ which do not involve $x_0$, $x_1$, or any $\w_i$ with $i>r$ (which are the $i$ with $\sigma_i=0$). The set $\Bc$ is more complicated, but it may be thought of as the set of all monomials of $x$-degree $-c$ which do not involve any $\w_i$ with $i>r$, and only include just as many other $\w_i$ as necessary to make the $x$-degree small enough. This is made precise in the next lemma.

\begin{lem}\label{lambda0}
Assume \autoref{reesdata}, and fix $c>0$. Place a partial order on $\N^s$ such that $\ua\le\ub$ if and only if $\alpha_i\le\beta_i$ for all $i$. Then $\Omega_c$ is the set of minimal elements of $\Lambda_c$ under this partial order.
\end{lem}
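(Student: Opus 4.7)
The plan is to prove both inclusions directly from the definitions, exploiting that $\sigma_1\ge\cdots\ge\sigma_r>0$ controls how much $\as$ can decrease when one component of $\ua$ is lowered.

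For the inclusion $\Omega_c\subset\{\text{minimal elements of }\Lambda_c\}$, I would fix $\ua\in\Omega_{c,i}$ and suppose $\ub\in\Lambda_c$ with $\ub\le\ua$. Since $\alpha_j=0$ for $j>i$, the componentwise inequality forces $\beta_j=0$ for $j>i$, so in particular $\ub^0=0$ is automatic (recall $i\le r$). If $\ub<\ua$ strictly, choose $k\le i$ with $\beta_k<\alpha_k$; then
\[\bs \;\le\; \as - \sigma_k \;\le\; \as - \sigma_i \;<\; (c+\sigma_i) - \sigma_i \;=\; c,\]
where I use $\sigma_k\ge\sigma_i$ because $k\le i\le r$ and the $\sigma_j$ are nonincreasing. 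This contradicts $\ub\in\Lambda_c$, so $\ua$ is minimal.

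For the reverse inclusion, let $\ua$ be minimal in $\Lambda_c$. Since $\ua^0=0$ we already know $\alpha_j=0$ for $j>r$. Let $i$ be the largest index with $\alpha_i>0$; such an index exists because $\as\ge c>0$, and it satisfies $i\le r$ since $\sigma_j=0$ for $j>r$ would otherwise make $\as$ unaffected by those components. Thus $\alpha_i>0$ and $\alpha_j=0$ for $j>i$, giving two of the three defining conditions of $\Omega_{c,i}$. The remaining condition $\as<c+\sigma_i$ is precisely what minimality forces: if $\as\ge c+\sigma_i$, then $\ub=\ua-e_i$ (where $e_i$ denotes the $i$th standard basis vector) satisfies $\ub\in\N^s$, $\ub^0=0$, and $\bs=\as-\sigma_i\ge c$, so $\ub\in\Lambda_c$ with $\ub<\ua$, contradicting minimality. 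Hence $\ua\in\Omega_{c,i}\subset\Omega_c$.

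Neither direction looks technically hard; the only subtle point, and the one I would double-check, is the inequality $\sigma_k\ge\sigma_i$ in the first direction — this is exactly where the monotonicity assumption on the $\sigma_j$ from \autoref{reesdata} is used, and without it the argument fails. Everything else is just bookkeeping on coordinates and a single decrement argument.
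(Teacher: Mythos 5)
Your proof is correct and follows essentially the same route as the paper's: the same single-decrement argument ($\ub=\ua-e_i$) for showing minimal elements lie in $\Omega_c$, and the same use of the nonincreasing order of the $\sigma_j$ (via $\sigma_k\ge\sigma_i$ for $k\le i$) for the reverse inclusion, which the paper phrases contrapositively as $(\ua-\ub)\cdot\us<\sigma_i$ forcing $\ua=\ub$. The subtle point you flagged is indeed exactly where the paper invokes the monotonicity from \autoref{reesdata}.
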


\begin{proof}
First, we show that every minimal element of $\Lambda_c$ is in $\Omega_c$. Let $\ua$ be a minimal element of $\Lambda_c$. Choose $i$ so that $\alpha_i>0$ and $\alpha_j=0$ for all $j>i$. From the definition of $\Lambda_c$, we have $\ua^0=0$. This means $\alpha_{r+1}=\cdots=\alpha_s=0$, therefore $i\le r$. We claim that $\ua\in\Omega_{c,i}$. Suppose not. Then since $\as\ge c$ by definition of $\Lambda_c$, we must have $\as\ge c+\sigma_i$. Let $\ub=(\alpha_1,\ldots,\alpha_{i-1},\alpha_i-1,0,\ldots,0)=\ua-(0,\ldots,0,1,0,\ldots,0)$ (with the 1 in position $i$), which is in $\N^s$ since $\alpha_i>0$. Moreover, $\ub^0=0$, and $\bs=\as-\sigma_i\ge c$. Therefore $\ub\in\Lambda_c$. But $\ub<\ua$, contradicting the minimality of $\ua$. Hence $\ua\in\Omega_{c,i}\subset\Omega_c$.

Now we must show that every element of $\Omega_c$ is minimal in $\Lambda_c$. Take $\ua\in\Omega_c$; then $\ua\in\Omega_{c,i}$ for some $1\le i\le r$. Consider $\ub\in\Lambda_c$ with $\ub\le\ua$. Choose $k$ so that $\beta_k>0$ and $\beta_j=0$ for all $j>k$. Since $\ub\le\ua$, the largest nonzero index of $\ua$ must be greater than or equal to the largest nonzero index of $\ub$, that is, $i\ge k$. This means that for $j>i$, $\alpha_j=\beta_j=0$. Further, the inequalities $\bs\ge c$ and $\as<c+\sigma_i$ yield $(\ua-\ub)\cdot\us=\as-\bs<(c+\sigma_i)-c=\sigma_i$. Thus $\sigma_i>(\ua-\ub)\cdot\us=\sum_{j=1}^s(\alpha_j-\beta_j)\sigma_j=\sum_{j=1}^i(\alpha_j-\beta_j)\sigma_j$, since $\alpha_j=\beta_j$ for $j>i$. But each coefficient $\alpha_j-\beta_j$ is nonnegative, so if some $\alpha_\ell\neq\beta_\ell$, we would have $\sum_{j=1}^i(\alpha_j-\beta_j)\sigma_j\ge\sigma_\ell\ge\sigma_i$, the latter inequality because the $\sigma_j$ are in non-increasing order. This contradiction means that we must have $\alpha_j=\beta_j$ for all $j$, thus $\ua=\ub$. Hence $\ua$ is minimal in $\Lambda_c$.
\end{proof}

\begin{thm}\label{rfgen}
Assume \autoref{reesdata}, and fix $c>0$. Let $\mathcal{A}_c$ and $\mathcal{B}_c$ be as defined in \autoref{dfrfgen}.
\begin{itemize}
\item[(a)] The image of $\mathcal{A}_c$ is a minimal generating set for the $\R(M)$-module $\R(F)_{\ge-c,*}/R\R(F)_{-c,*}$.
\item[(b)] $\mathcal{B}_c$ is a minimal generating set for the $\R(M)_{0,*}$-module $\R(F)_{-c,*}$.
\item[(c)] $\mathcal{A}_c\cup\mathcal{B}_c$ is a minimal generating set for the $\R(M)$-module $\R(F)_{\ge-c,*}$.
\end{itemize}
\end{thm}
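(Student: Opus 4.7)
The plan is to prove (a) and (b) in turn and then combine them to obtain (c). In each case, generation is verified by explicit monomial decomposition using \autoref{winre} (to absorb the $\ua^0$-part into the coefficient ring) and \autoref{lambda0} (to locate an element of $\Omega_c$ below $\ua^+$ when $\as\ge c$), while minimality is obtained by reading off the coefficient of a distinguished monomial of $\R(F)$ on both sides of any putative relation.

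For (a), I would start with a bihomogeneous monomial $x_0^j x_1^k \ut^\ua\in\R(F)_{\ge-c,*}$ and split $\ua=\ua^++\ua^0$, absorbing $\ut^{\ua^0}$ into the $\R(M)$-coefficient by \autoref{winre}. If $\as<c$ then $\ut^{\ua^+}\in\Ac$ and the remaining factor $x_0^j x_1^k$ lies in $R\subset\R(M)$. If $\as\ge c$ then $j+k\ge\as-c\ge 0$ and I would peel off $\as-c$ copies of $x_0$ (or of $x_1$, whichever exponent is large enough) to express the monomial as an element of $R\R(F)_{-c,*}$. For minimality, I would assume $\ut^{\ua_0}=\sum_i m_i\ut^{\ua_i}+r$ with $\ua_i\ne\ua_0$ in $\Ac$, $m_i\in\R(M)$, $r\in R\R(F)_{-c,*}$, and compare the coefficient of the pure monomial $\ut^{\ua_0}$. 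A monomial of $m_i$ contributes only if it equals $\ut^{\ua_0-\ua_i}$, forcing $\ua_i\le\ua_0$; since $\ua_i^0=\ua_0^0=0$, strict inequality gives $\as_i<\as_0$ so that $\ut^{\ua_0-\ua_i}$ has negative $x$-degree and cannot lie in $\R(M)$. No monomial of $r$ can equal $\ut^{\ua_0}$ either, because that would force $\as_0=c$, contradicting $\ua_0\in\Ac$.

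For (b), given $h=x_0^j x_1^k\ut^\ua\in\R(F)_{-c,*}$, so $\as=j+k+c$, I absorb $\ut^{\ua^0}$ into $\R(M)_{0,*}$ and note $\ua^+\in\Lambda_c$. By \autoref{lambda0} pick $\ua'\in\Omega_c$ with $\ua'\le\ua^+$, set $\ua''=\ua^+-\ua'$, and choose $j',k'\ge 0$ with $j'+k'=\as'-c$, $j'\le j$, $k'\le k$, which is possible since $\as'\le\as$. Then
\[h=\bigl(x_0^{j-j'}x_1^{k-k'}\ut^{\ua''+\ua^0}\bigr)\bigl(x_0^{j'}x_1^{k'}\ut^{\ua'}\bigr),\]
where the first factor has $x$-degree $0$ and nonnegative exponents, hence lies in $\R(M)_{0,*}$, and the second lies in $\Bc$. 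For minimality, in an assumed relation $h_0=\sum_i m_i h_i$ with distinct $h_i\in\Bc$ and $m_i\in\R(M)_{0,*}$, examining the coefficient of the monomial $h_0=x_0^{j_0}x_1^{k_0}\ut^{\ua_0}$ forces any contributing monomial of $m_i$ to satisfy $\ua_i\le\ua_0$ with $\ua_i,\ua_0\in\Omega_c$. By \autoref{lambda0} these are minimal in $\Lambda_c$, so $\ua_i=\ua_0$, then the $x$-exponent match forces $j_i=j_0$, $k_i=k_0$, giving $h_i=h_0$, a contradiction.

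Part (c) is then formal. Generation combines (a) with the inclusion $R\R(F)_{-c,*}=R\cdot\R(M)_{0,*}\Bc\subset\R(M)\Bc$ extracted from (b). For minimality, I would split any relation $h=\sum m_i h_i$ according to whether $h_i\in\Ac$ or $h_i\in\Bc$: if $h\in\Ac$, the $\Bc$-terms contribute to $R\R(F)_{-c,*}$ and the relation descends to $\R(F)_{\ge-c,*}/R\R(F)_{-c,*}$, contradicting (a); if $h\in\Bc$, comparing $x$-degrees together with $\deg_x m_i\ge 0$ forces every surviving term to have $h_i\in\Bc$ and $m_i\in\R(M)_{0,*}$, contradicting (b). The main obstacle I expect is the bookkeeping in the minimality arguments, which hinges on the fact that elements of $\Omega_c$ are pairwise incomparable in the componentwise order on $\N^s$; once this is in hand via \autoref{lambda0}, the monomial comparisons reduce to tracking $\us$-weighted inequalities.
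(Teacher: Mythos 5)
Your proposal is correct and follows essentially the same route as the paper: generation via the same monomial decompositions (splitting $\ua=\ua^++\ua^0$, distributing $\as-c$ among the exponents of $x_0$ and $x_1$, and using \autoref{lambda0} to descend to an element of $\Omega_c$), and minimality via the fact that no generator divides another within the relevant coefficient ring, which is exactly what your coefficient comparisons amount to for monomial generating sets. The one cosmetic difference is that you invoke \autoref{winre} to absorb $\ut^{\ua^0}$ into the coefficients, whereas all that is needed here is the immediate observation that $\deg_x\ut^{\ua^0}=0$ places it in $\R(M)_{0,*}$; the paper reserves \autoref{winre} (which concerns $\R(E)$ and requires a preliminary automorphism) for the proof of \autoref{kregen}.
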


\begin{proof}
(a) Let $\overline{\Ac}$ be the image of $\Ac$ in $\R(F)_{\ge-c,*}/R\R(F)_{-c,*}$. Since $\R(F)_{\ge-c,*}$ is generated by monomials, to show that $\overline{\Ac}$ is a generating set, it suffices to show that for any monomial $h\in\R(F)$ with $\deg_xh\ge-c$, the image $\overline{h}\in\R(F)_{\ge-c,*}/R\R(F)_{-c,*}$ is in the $\R(M)$-module generated by $\overline{\Ac}$. Write $h=x_0^jx_1^k\ut^{\ub}$. First suppose $\ub^+\cdot\us<c$. Then since $(\ub^+)^0=0$, if we set $h'=\ut^{\ub^+}$ we have $h'\in\Ac$. Since $x_0^jx_1^k\ut^{\ub^0}$ has nonnegative $x$-degree, it is in $\R(M)$, therefore $\overline{h}=x_0^jx_1^k\ut^{\ub^0}\overline{h'}$ is in the $\R(M)$-module generated by $\overline{\Ac}$. On the other hand, suppose $c\le\ub^+\cdot\us=\bs$. Then since $-c\le\deg_xh=j+k-\bs$, we have $0\le\bs-c\le j+k$. Thus there are $0\le j'\le j$ and $0\le k'\le k$ such that $j'+k'=\bs-c$. If we set $h'=x_0^{j'}x_1^{k'}\ut^{\ub}$, then we have $\deg_xh'=j'+k'-\bs=-c$, meaning $h'\in\R(F)_{-c,*}$. Therefore $h=x_0^{j-j'}x_1^{k-k'}\in R\R(F)_{-c,*}$, so $\overline{h}=0$, which is certainly in the $\R(M)$-module generated by $\overline{\Ac}$. Hence $\overline{\Ac}$ generates $\R(F)_{\ge-c,*}/R\R(F)_{-c,*}$.

Because $\Ac$ consists of monomials, to see that it is a minimal generating set, it is enough to show that no element of $\Ac$ divides another. Suppose there are $h,h'\in\Ac$ with $h'\mid h$, meaning there is $\ell\in\R(M)$ with $h=h'\ell$. Write $h=\ut^{\ua}$ and $h'=\ut^{\ua'}$. The only way we can have $h=h'\ell$ is if $\ua'\le\ua$, in which case $\ell=\ut^{\ua-\ua'}$. Because $\ell\in\R(M)$, we must have $\deg\ell=-(\ua-\ua')\cdot\us\ge0$, so $(\ua-\ua')\cdot\us\le0$. But each term in the sum $(\ua-\ua')\cdot\us=\sum_{i=1}^s(\alpha_i-\alpha'_i)\sigma_i$ is nonnegative, so each $(\alpha_i-\alpha'_i)\sigma_i$ must be 0. If $i\le r$, then $\sigma_i>0$, so $\alpha_i=\alpha'_i$. If $i>r$, then $\alpha_i=\alpha'_i=0$ by the assumption that $\ua^0=(\ua')^0=0$. Thus $\ua=\ua'$ and therefore $h=h'$. Hence $\overline{\Ac}$ is a minimal generating set for $\R(F)_{\ge-c,*}/R\R(F)_{-c,*}$.

(b) Since every monomial in $\Bc$ has $x$-degree $-c$, we have $\Bc\subset\R(F)_{-c,*}$. Since $\R(F)_{-c,*}$ is generated by monomials, to show that $\Bc$ is a generating set it will suffice to show that any monomial $h\in\R(F)_{-c,*}$ is divisible by an element of $\Bc$. Write $h=x_0^jx_1^k\ut^{\ub}$. Then
\begin{equation}\label{eq:degc}-c=\deg_xh=j+k-\bs=j+k-\ub^+\cdot\us,\end{equation} so $\ub^+\cdot\us\ge c$. Also, $(\ub^+)^0=0$, therefore $\ub^+\in\Lambda_c$. By \autoref{lambda0}, there is $\ua\in\Omega_c$ such that $\ua\le\ub^+$. Since $\ua\in\Lambda_c$, $\as\ge c$. Thus $0\le\as-c\le\ub^+\cdot\us-c=j+k$ (using \eqref{eq:degc}), so there are $0\le j'\le j$ and $0\le k'\le k$ such that $j'+k'=\as-c$. Set $h'=x_0^{j'}x_1^{k'}\ut^{\ua}$. Then $h'\in\Bc$ since $\ua\in\Omega_c$ and $j'+k'=\as-c$. Because $j'\le j$, $k'\le k$, and $\ua\le\ub^+\le\ub$, there is $\ell\in\R(F)$ such that $h=\ell h'$. Since $\deg_x\ell=\deg_xh-\deg_xh'=(-c)-(-c)=0$, we have $\ell\in\R(M)_{0,*}$. This proves that $h$ is divisible by an element of $\Bc$, so $\Bc$ is a generating set for $\R(F)_{-c,*}$.

To show minimality, since $\Bc$ consists of monomials, it is enough to show that no element of $\Bc$ divides another. Suppose there are $h,h'\in\Bc$ with $h'\mid h$. Write $h=x_0^jx_1^k\ut^{\ua}$ and $h'=x_0^{j'}x_1^{k'}\ut^{\ua'}$. Then $\ua,\ua'\in\Omega_c$ and $\ua'\le\ua$. By \autoref{lambda0}, $\ua'=\ua$. Therefore $j'+k'=\ua'\cdot\us-c=\as-c=j+k$, but also $j'\le j$ and $k'\le k$, therefore $j'=j$ and $k'=k$. This shows that $h'=h$, hence $\Bc$ is a minimal generating set for $\R(F)_{-c,*}$.

(c) It follows from (b) that $\mathcal{B}_c$ minimally generates the $\R(M)$-module $L$ defined in (a), and $\mathcal{A}_c$ minimally generates $\R(F)_{\ge-c,*}/L$ by (a). Thus, taken together, $\Ac\cup\Bc$ generates $\R(F)_{\ge-c,*}$. For minimality, it is enough to show that no element of $\Ac\cup\Bc$ divides another. But no element of $\Ac$ is divisible by another element of $\Ac\cup\Bc$ by (a), and no element of $\Bc$ is divisible by another element of $\Bc$ by (b). Finally, no element of $\Bc$ may be divisible by an element of $\Ac$ since every element of $\Bc$ has $x$-degree $-c$ while every element of $\Ac$ has $x$-degree $>-c$. Thus $\Ac\cup\Bc$ is a minimal generating set for $\R(F)_{\ge-c,*}$.
\end{proof}

\subsection{Generators of the equations of the Rees algebra}

Now we will prove the main theorem. In \autoref{remint}, we showed the equality $\R(E)_{\ge d_m-1,*}=\R(M)_{\ge d_m-1,*}$. This allows us to compute the contraction from \autoref{kre} in $x$-degree $\ge d_m$, and thus determine the generators of $\K_{m+1}\R(E)$ in these degrees using \autoref{rfgen}.

It may seem like we should be able to compute the minimal generators in $x$-degree $d_m-1$ as well. However, because \autoref{rfgen} gives a generating set as an $\R(M)$-module, and we want a generating set as an $\R(E)$-module, the proof of \autoref{kregen} is not as simple as just applying \autoref{rfgen}. Instead, it works by factoring out the $x$-degree $d_m-1$ part of $\K_{m+1}\R(E)$, which only allows us to compute the generators in $x$-degree $\ge d_m$. In \S4, we will determine the generators in $x$-degree $d_m-1$ when $n=3$ and $m=1$.

\begin{thm}\label{kregen}
Assume \autoref{reesdata}. Then the set \begin{equation}\label{eq:kregen}\{g_{m+1}\ut^{\ua}\mid\ua\in\N^s\text{ and }\ua^0=0\text{ and }\as\le d_{m+1}-d_m\}\end{equation} is equal to the $x$-degree $\ge d_m$ part of a minimal generating set for $\K_{m+1}\R(E)$ as an $\R(E)$-module.
\end{thm}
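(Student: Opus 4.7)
Set $c' = d_{m+1} - d_m \ge 0$ and write $\mathcal{S}$ for the set in \eqref{eq:kregen}. The plan is to prove the theorem in three parts: containment of $\mathcal{S}$ in $\K_{m+1}\R(E)$ with correct $x$-degrees, generation modulo the $\R(E)$-submodule coming from lower $x$-degrees, and minimality. For containment, each $g_{m+1}\ut^{\ua}\in\mathcal{S}$ has $x$-degree $d_{m+1}-\as\ge d_m$, so by \autoref{remint}(a) it lies in $\R(E)$, and since $g_{m+1}\in\K_{m+1}$ it lies in $\K_{m+1}\R(E)$.

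For generation, combining \autoref{kre} with \autoref{remint}(a) yields the key identification
\[(\K_{m+1}\R(E))_{\ge d_m,*} = g_{m+1}\R(F)_{\ge -c',*}.\]
Next I would apply \autoref{rfgen}(c) with parameter $c'$ to get that $\mathcal{A}_{c'}\cup\mathcal{B}_{c'}$ generates $\R(F)_{\ge -c',*}$ as an $\R(M)$-module, so $g_{m+1}(\mathcal{A}_{c'}\cup\mathcal{B}_{c'})$ generates the target as an $\R(M)$-module. The subset $g_{m+1}\mathcal{A}_{c'}$ is exactly $\{g_{m+1}\ut^{\ua}\in\mathcal{S} : \as<c'\}$, and for $\ua\in\Omega_{c'}$ with $\as=c'$ (which forces $j=k=0$) the resulting $\mathcal{B}_{c'}$-elements lie in $\mathcal{S}$ as well. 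The remaining $g_{m+1}\cdot x_0^j x_1^k\ut^{\ua}$ with $\as>c'$ and $j+k=\as-c'$ all sit in $x$-degree exactly $d_m$; I would factor each as $x_0^j x_1^k\cdot(g_{m+1}\ut^{\ua})$ and argue that modulo the $\R(E)$-submodule generated by $(\K_{m+1}\R(E))_{<d_m,*}$ these reduce to $\R(E)$-combinations of $\mathcal{S}$. The reduction uses the identities $T_i=\sum_j\xi_{j,i}\w_j$ from \autoref{fpoly} to convert high-$\as$ monomials into $T$-monomials modulo elements of $\R(F)$ of strictly smaller $x$-degree, which feed into $(\K_{m+1}\R(E))_{<d_m,*}$ after multiplying by suitable monomials in $x_0,x_1$.

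For minimality, I would split by $x$-degree. In $x$-degree strictly greater than $d_m$ the candidate generators are in bijection with $\mathcal{A}_{c'}$; any purported $\R(E)$-linear dependence modulo lower-$x$-degree contributions would, after cancelling the nonzerodivisor $g_{m+1}\in\R(F)$ and passing to $\R(M)$-coefficients, yield a dependence contradicting the minimality statement in \autoref{rfgen}(a). In $x$-degree exactly $d_m$, the candidates $\{g_{m+1}\ut^{\ua} : \as=c',\ \ua^0=0\}$ admit no $\R(E)$-dependence because any coefficient must land in $\R(E)_{0,*}=\k[T_1,\ldots,T_n]$ for degree reasons, and expanding via $T_i=\sum_j\xi_{j,i}\w_j$ then comparing distinct $\w$-monomials in $\R(F)$ of $x$-degree $-c'$ forces the relation to be trivial.

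The main obstacle is the reduction step in the generation argument: systematically showing that the bulk $\mathcal{B}_{c'}$-generators with $\as>c'$ contribute nothing new beyond $\mathcal{S}$ and lower-$x$-degree terms. This delicate passage from the $\R(M)$-generating set supplied by \autoref{rfgen} to an $\R(E)$-generating set is exactly what pins the theorem to $x$-degree $\ge d_m$ rather than $\ge d_m-1$.
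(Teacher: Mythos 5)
There is a genuine gap, and you have in fact flagged it yourself: the redundancy of the $\mathcal{B}_{c'}$-generators with $\as>c'$ is not a loose end but the crux, and your sketch of the reduction does not work as stated. Writing $g_{m+1}x_0^jx_1^k\ut^{\ua}=x_0^jx_1^k\cdot(g_{m+1}\ut^{\ua})$ is of no use because for $\ua\in\Omega_{c'}$ with $\as>c'$ the factor $g_{m+1}\ut^{\ua}$ has $x$-degree $d_{m+1}-\as\le d_m-1$, and once this drops below $d_m-1$ (which happens as soon as $\as\ge c'+2$, possible whenever $\sigma_1\ge 2$) there is no reason for $g_{m+1}\ut^{\ua}$ to lie in $\R(E)$ at all --- \autoref{remint}(a) only identifies $\R(E)$ with $\R(F)$ in $x$-degree $\ge d_m-1$ --- so it is not an element of $(\K_{m+1}\R(E))_{<d_m,*}$ and cannot ``feed into'' that submodule. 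The paper sidesteps this entirely by a different choice of truncation: it takes $c=d_{m+1}-d_m+1$ (one more than your $c'$), so that $\mathcal{A}_c$ is exactly $\mathcal{S}/g_{m+1}$ and \emph{every} element of $\mathcal{B}_c$ has $x$-degree $-c$, hence $g_{m+1}\mathcal{B}_c$ lands in $x$-degree $d_m-1$; the proof then works in $\J_{\ge d_m-1,*}/R\J_{d_m-1,*}$, where the entire $\mathcal{B}_c$-contribution is quotiented away in one stroke. This reformulation is precisely why the theorem is stated for $x$-degree $\ge d_m$ and not $\ge d_m-1$.

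A second, related omission: \autoref{rfgen} only gives generation over $\R(M)$, whereas the theorem asserts generation over the smaller ring $\R(E)$, and your proposal never performs this descent for the $\mathcal{A}_{c'}$-part. In the paper this is where the real work lies: one must show $\Ac\R(M)\subset\Ac\R(E)+R\R(F)_{-c,*}$, which requires first normalizing $\xi$ via \autoref{winre} so that $\ut^{\ub^0}\in\R(E)$, and then a case analysis on whether $(\ua+\ub)\cdot\us$ is below or above $c$ (in the first case one absorbs $\ut^{\ub^+}$ into the $\Ac$-factor and puts $x_0^jx_1^k\ut^{\ub^0}$ in $\R(E)$; in the second one lands in $R\R(F)_{-c,*}$). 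Your minimality argument, by contrast, is essentially sound in outline, though the paper's is much shorter: minimality over the larger ring $\R(M)$ (from \autoref{rfgen}(a)) immediately implies minimality over $\R(E)$.
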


\begin{proof}
Observe that the set \eqref{eq:kregen} is equal to $g_{m+1}\Ac$ for $c=d_{m+1}-d_m+1$, where $\Ac$ is defined in \autoref{dfrfgen}. Let $\J=\K_{m+1}\R(E)$. The claim that $g_{m+1}\mathcal{A}_c$ is the part of a minimal generating set of $\J$ having $x$-degree $\ge d_m$ is equivalent to saying that the image of $g_{m+1}\mathcal{A}_c$ in $\R(E)/R\J_{d_m-1,*}$ is a minimal generating set of $\J_{\ge d_m-1,*}/R\J_{d_m-1,*}$.

From \autoref{kre}, $\J=(g_{m+1}\R(F)_{\ge-d_{m+1},*})\cap\R(E)$. Therefore
\[\J_{\ge d_m-1,*}=(g_{m+1}\R(F))_{\ge d_m-1,*}\cap\R(E)_{\ge d_m-1,*}=(g_{m+1}\R(F)_{\ge-c,*})\cap\R(E)_{\ge d_m-1,*}.\]
But $\R(E)_{\ge d_m-1,*}=\R(F)_{\ge d_m-1,*}$ by \autoref{remint}, therefore
\begin{equation}\label{eq:jrf}\J_{\ge d_m-1,*}=g_{m+1}\R(F)_{\ge-c,*}.\end{equation}

We claim that it is enough to show that $\R(F)_{\ge-c,*}=\Ac\R(E)+R\R(F)_{-c,*}$. Indeed, if we multiply this equation by $g_{m+1}$ and use \eqref{eq:jrf}, we obtain $\J_{\ge d_m-1,*}=g_{m+1}\Ac\R(E)+R\J_{d_m-1,*}$. This shows that $g_{m+1}\Ac$ is a generating set for $\J_{\ge d_m-1,*}/R\J_{d_m-1,*}$ over $\R(E)$. Minimality follows because $\Ac$, and therefore $g_{m+1}\Ac$, is minimal over the larger ring $\R(M)$ (\autoref{rfgen}(a)).

We now prove that $\R(F)_{\ge-c,*}=\Ac\R(E)+R\R(F)_{-c,*}$. \autoref{rfgen}(a) says that $\Ac$ generates $\R(F)_{\ge-c,*}/R\R(F)_{c,*}$ as an $\R(M)$-module, that is to say, $\R(F)_{\ge-c,*}=\Ac\R(M)+R\R(F)_{-c,*}$. It is clear that $\Ac\R(E)+R\R(F)_{-c,*}\subset\Ac\R(M)+R\R(F)_{-c,*}$, so we just need to show that $\Ac\R(M)\subset\Ac\R(E)+R\R(F)_{-c,*}$. Because $\R(M)$ is generated by monomials, it suffices to show that for any monomials $h\in\Ac$ and $f\in\R(M)$, we have $hf\in\Ac\R(E)+R\R(F)_{-c,*}$.

By \autoref{winre}, we may apply an automorphism of $\k[\w_1,\ldots,\w_s]$ which fixes $\w_1,\ldots,\w_r$, and therefore fixes $\Ac$, to assume that $\ut^{\ub^0}\in\R(E)$ for all $\ub$. From the definition of $\Ac$, we may write $h=\ut^{\ua}$ with $\ua^0=0$ and $\as<c$. Also, write $f=x_0^jx_1^k\ut^{\ub}$, so that $hf=x_0^jx_1^k\ut^{\ua+\ub}$. If $(\ua+\ub)\cdot\us<c$, then since $(\ua+\ub)\cdot\us=(\ua+\ub^+)\cdot\us$, we have $\ut^{\ua+\ub^+}\in\Ac$. Thus $hf$ is the product of $\ut^{\ua+\ub^+}\in\Ac$ and $x_0^jx_1^k\ut^{\ub^0}\in\R(E)$, meaning $hf\in\Ac\R(E)$.

On the other hand, if $(\ua+\ub)\cdot\us\ge c$, then $d:=\deg_x(\ut^{\ua+\ub})\le-c$. Since $\deg_xh>-c$ (as $h\in\Ac$) and $\deg_xf\ge0$ (as $f\in\R(M)$), we obtain $\deg_x(hf)\ge-c$. But $\deg_x(hf)=\deg_x(x_0^jx_1^k\ut^{\ua+\ub})=j+k+d$, so $0\le-c-d\le j+k$. Thus there are $0\le j'\le j$ and $0\le k'\le k$ such that $j'+k'=-c-d$. Set $\ell=x_0^{j'}x_1^{k'}\ut^{\ua+\ub}$. Then $\deg_x\ell=j'+k'+d=-c$, so $\ell\in\R(F)_{-c,*}$. Therefore $hf=(x_0^{j-j'}x_1^{k-k'})\ell\in R\R(F)_{-c,*}$.

In either case, $hf\in\Ac\R(E)+R\R(F)_{-c,*}$, which proves the claim that $\R(F)_{\ge-c,*}=\Ac\R(E)+R\R(F)_{-c,*}$. Therefore the image of $g_{m+1}\Ac$ minimally generates $\J_{\ge d_m,*}/R\J_{d_m-1,*}$ as an $\R(E)$-module. For every $\ua\in\N^s$ with $\ua^0=0$ and $\as<c$, we get a generator $g_{m+1}\ut^{\ua}\in g_{m+1}\mathcal{A}_c$ having bidegree $(d_{m+1}-\as,\sum_{i=1}^r\alpha_i+1)$.
\end{proof}

The bidegrees of the minimal generators given in \autoref{kregen} may be computed:

\begin{cor}\label{bideg}
Assume \autoref{reesdata}. Then $\K_{m+1}\R(E)$ has a minimal generating set whose elements of $x$-degree $\ge d_m$ lie in bidegrees $(d_{m+1}-\as,\sum_{i=1}^r\alpha_i+1)$ for each $\ua\in\N^s$ with $\ua^0=0$ and $\as\le d_{m+1}-d_m$. In particular, the only minimal generator of $\K_{m+1}\R(E)$ having $x$-degree $d_{m+1}$ is $g_{m+1}$, and all other minimal generators of $\K_{m+1}\R(E)$ have $x$-degree $<d_{m+1}$.
\end{cor}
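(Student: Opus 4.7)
The plan is to deduce this corollary directly from \autoref{kregen} by a routine bidegree computation. That theorem already identifies the set in \eqref{eq:kregen} as the $x$-degree $\ge d_m$ part of a minimal generating set for $\K_{m+1}\R(E)$, so the task reduces to computing the bidegree of each $g_{m+1}\ut^{\ua}$ and then extracting consequences for the largest possible $x$-degree.

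First I will compute $\deg g_{m+1}$. By \eqref{eq:defg}, $g_{m+1} = \sum_{i=1}^n T_i\,\varphi_{i,m+1}$, and since the $(m+1)$-st column of $\varphi$ has entries of degree $d_{m+1}$ (from the resolution \eqref{eq:ires}) while $\deg T_i = (0,1)$, this gives $\deg g_{m+1} = (d_{m+1}, 1)$. Next, \autoref{fpoly} records $\deg \w_i = (-\sigma_i, 1)$, so $\deg \ut^{\ua} = \bigl(-\as,\, \sum_{i=1}^s \alpha_i\bigr)$. The condition $\ua^0 = 0$ appearing in \eqref{eq:kregen} forces $\alpha_i = 0$ for $i > r$, so $\sum_{i=1}^s \alpha_i = \sum_{i=1}^r \alpha_i$. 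Multiplying then yields the bidegree $\bigl(d_{m+1} - \as,\, 1 + \sum_{i=1}^r \alpha_i\bigr)$ claimed in the first assertion.

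For the ``in particular'' statement, the $x$-degree of any generator from \eqref{eq:kregen} equals $d_{m+1} - \as$, which attains the value $d_{m+1}$ precisely when $\as = 0$. Since each $\sigma_i > 0$ for $i \le r$ by the choice of $r$ in \autoref{reesdata}, and the constraint $\ua^0 = 0$ already forces $\alpha_j = 0$ for $j > r$, the equation $\as = \sum_{i=1}^r \alpha_i \sigma_i = 0$ forces $\ua = 0$; the corresponding generator is $g_{m+1}$ itself. Every other generator in \eqref{eq:kregen} has $\as \ge \sigma_r > 0$, hence $x$-degree strictly less than $d_{m+1}$. The minimal generators of $\K_{m+1}\R(E)$ \emph{not} covered by \autoref{kregen} automatically have $x$-degree $< d_m \le d_{m+1}$, since the column degrees satisfy $d_1 \le \cdots \le d_{n-1}$. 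I do not anticipate any real obstacle: the corollary is a direct translation of \autoref{kregen} into the language of bidegrees, and the only substantive observation is that positivity of each $\sigma_i$ for $i \le r$ rules out any $\ua \ne 0$ achieving the extremal $x$-degree.
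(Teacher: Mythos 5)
Your argument is correct and follows the same route as the paper: read off the bidegree of $g_{m+1}\ut^{\ua}$ from $\deg g_{m+1}=(d_{m+1},1)$ and $\deg\w_i=(-\sigma_i,1)$, then observe that $\as>0$ whenever $\ua\neq\underline{0}$ with $\ua^0=0$, so only $g_{m+1}$ attains $x$-degree $d_{m+1}$. Your extra remark that generators not covered by \autoref{kregen} have $x$-degree $<d_m\le d_{m+1}$ is a point the paper leaves implicit, but the proof is essentially identical.
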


\begin{proof}
Since $g_{m+1}$ has bidegree $(d_{m+1},1)$, for every $\ua\in\N^s$ with $\ua^0=0$ and $\as\le d_{m+1}-d_m$, the element $g_{m+1}\ut^{\ua}$ has bidegree $(d_{m+1}-\as,\sum_{i=1}^r\alpha_i+1)$, so the first claim follows from \autoref{kregen}. The second claim comes from noticing that if $\ua^0=0$ ad $\ua\neq\underline{0}$, then $\deg_x(\ut^{\ua})<0$, therefore $\deg_x(g_{m+1}\ut^{\ua})<d_{m+1}$.
\end{proof}

As a consequence, we get \autoref{kregencor}, which gives the bidegrees of the minimal generators of $\K$ having $x$-degree at least $d_{n-2}$. For example pictures of these bidegrees, see Tables 1, 2, and 3. The row represents $T$-degree, the column represents $x$-degree, and the numbers in the table are the number of generators in the corresponding bidegree. Only the minimal generators lying on the right of the vertical line are given in \autoref{kregencor}. Note the patterns in these tables: in cases (i) and (iii), the generators lie on a line of slope $-1/\sigma_1$; while in case (ii), the generators lie between a line of slope $-1/\sigma_1$ and a line of slope $-1/\sigma_2$.

\begin{cor}\label{kregencor}
Assume \autoref{reesdata} with $m=n-2$. Then $s=2$ and $\sigma_1+\sigma_2=\sum_{i=1}^{n-2}d_i$, and the minimal generators of $\K$ having $x$-degree $\ge d_{n-2}$ consist of the generator $g_{n-2}$ and any other $g_m$ having bidegree $(d_{n-2},1)$, and also:
\begin{itemize}
\item[(i)] If $\sigma_2=0$: one generator of bidegree $(d_{n-1}-j\sigma_1,j+1)$ for each integer $j$ with $0\le j\le\frac{d_{n-1}-d_{n-2}}{\sigma_1}$.
\item[(ii)] If $\sigma_1>\sigma_2>0$: one generator of bidegree $(d_{n-1}-i\sigma_1-(j-i)\sigma_2,j+1)$ for each pair of integers $(j,i)$ with $0\le j\le\frac{d_{n-1}-d_{n-2}}{\sigma_2}$ and $0\le i\le\min\{j,\frac{d_{n-1}-d_{n-2}-j\sigma_2}{\sigma_1-\sigma_2}\}$.
\item[(iii)] If $\sigma_1=\sigma_2$: $j+1$ generators of bidegree $(d_{n-1}-j\sigma_1,j+1)$ for each integer $j$ with $0\le j\le\frac{d_{n-1}-d_{n-2}}{\sigma_1}$.
\end{itemize}
\end{cor}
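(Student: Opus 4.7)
The plan is to specialize \autoref{kregen} to $m=n-2$, convert the abstract description of the generators into explicit bidegree data by a case analysis on $\sigma_1,\sigma_2$, and then account for the minimal generators of $\K$ in $x$-degree exactly $d_{n-2}$ that come from lower stages of the filtration $\K_1\subsetneq\cdots\subsetneq\K_{n-1}=\K$.

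First, with $m=n-2$, \autoref{reesdata} forces $s=n-m=2$, and \autoref{fmode}(e) gives $\sigma_1+\sigma_2=\sum_{i=1}^{n-2}d_i$. \autoref{kregen} (combined with \autoref{bideg}) then says that the $x$-degree $\ge d_{n-2}$ part of a minimal generating set of $\K_{n-1}/\K_{n-2}$ over $\R(E_{n-2})$ consists of the elements $g_{n-1}\ut^{\ua}$ with $\ua\in\N^2$, $\ua^0=0$, and $\as\le d_{n-1}-d_{n-2}$, each having bidegree $(d_{n-1}-\as,\sum_{i=1}^r\alpha_i+1)$.

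The bulk of the argument is then a case analysis. If $\sigma_2=0$, then $r=1$ and $\ua^0=0$ forces $\alpha_2=0$; setting $j=\alpha_1$ recovers case (i) directly. If $\sigma_1>\sigma_2>0$, then $r=2$ and $\ua^0=0$ is automatic; the change of variables $j=\alpha_1+\alpha_2$, $i=\alpha_1$ turns $\as$ into $j\sigma_2+i(\sigma_1-\sigma_2)$, and the inequalities $0\le i\le j$ together with $\as\le d_{n-1}-d_{n-2}$ yield exactly the ranges for $(j,i)$ in case (ii). If $\sigma_1=\sigma_2$, then $\as$ depends only on $j=\alpha_1+\alpha_2$, and the $j+1$ choices $\alpha_1\in\{0,\ldots,j\}$ all produce generators in the single bidegree $(d_{n-1}-j\sigma_1,j+1)$, giving case (iii).

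Finally, to pass from a minimal generating set of $\K_{n-1}/\K_{n-2}$ to one of $\K$ itself in $x$-degree $\ge d_{n-2}$, I would apply \autoref{bideg} inductively to the lower filtration steps: every minimal generator of $\K_{m'+1}/\K_{m'}$ with $m'\le n-3$ has $x$-degree at most $d_{m'+1}\le d_{n-2}$, and in $x$-degree exactly $d_{m'+1}$ the only such minimal generator is $g_{m'+1}$. Hence lifts from lower stages contribute no minimal generators in $x$-degree $>d_{n-2}$, and in $x$-degree $d_{n-2}$ they contribute precisely the $g_m$ with $m\le n-2$ and $d_m=d_{n-2}$, namely $g_{n-2}$ and any other $g_m$ of that bidegree. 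The main obstacle is the bookkeeping in case (ii); once the substitution $(j,i)=(\alpha_1+\alpha_2,\alpha_1)$ is made, the stated bounds are immediate algebraic consequences of $\as\le d_{n-1}-d_{n-2}$.
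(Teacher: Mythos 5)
Your proposal is correct and follows essentially the same route as the paper: specialize \autoref{kregen}/\autoref{bideg} to $m=n-2$, make the substitution $(j,i)=(\alpha_1+\alpha_2,\alpha_1)$, and use \autoref{bideg} on the lower filtration stages to see that only the $g_m$ of bidegree $(d_{n-2},1)$ contribute from below. The only detail the paper records that you elide is the check in case (ii) that distinct pairs $(j,i)$ give distinct bidegrees (immediate from $\sigma_1>\sigma_2$), which justifies the phrasing ``one generator of bidegree \dots''.
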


\begin{table}
\begin{tabular}{c|ccc|cccccccccccccc}
7&&&&\\
6&&&&\\
5&&&&&1\\
4&&&&&&&&1\\
3&&&&&&&&&&&1\\
2&&&&&&&&&&&&&&1\\
1&&&&1&&&&&&&&&&&&&1\\
\hline
&0&1&2&3&4&5&6&7&8&9&10&11&12&13&14&15&16
\end{tabular}
\caption{An example of case (i), with $n=3$, $d_1=3$, $d_2=16$, $\sigma_1=3$, and $\sigma_2=0$.}

\begin{tabular}{c|ccccc|cccccccccccc}
7&&&&&&\\
6&&&&&&1&1\\
5&&&&&&1&1&1&1\\
4&&&&&&&&1&1&1&1\\
3&&&&&&&&&&&1&1&1\\
2&&&&&&&&&&&&&&1&1\\
1&&&&&&1&&&&&&&&&&&1\\
\hline
&0&1&2&3&4&5&6&7&8&9&10&11&12&13&14&15&16
\end{tabular}
\caption{An example of case (ii), with $n=3$, $d_1=5$, $d_2=16$, $\sigma_1=3$, and $\sigma_2=2$.}

\begin{tabular}{c|cccc|ccccccccccccc}
7&&&&&7\\
6&&&&&&&6\\
5&&&&&&&&&5\\
4&&&&&&&&&&&4\\
3&&&&&&&&&&&&&3\\
2&&&&&&&&&&&&&&&2\\
1&&&&&1&&&&&&&&&&&&1\\
\hline
&0&1&2&3&4&5&6&7&8&9&10&11&12&13&14&15&16
\end{tabular}
\caption{An example of case (iii), with $n=3$, $d_1=4$, $d_2=16$, $\sigma_1=2$, and $\sigma_2=2$.}
\end{table}

\begin{proof}
The equalities $s=2$ and $\sigma_1+\sigma_2=\sum_{i=1}^{n-2}d_i$ come from \autoref{fmode}. Since $\K=\K_{n-1}$, a generating set for $\K$ consists of the lifts of generating sets of each quotient $\K_{m+1}/\K_m\cong\K_{m+1}\R(E_m)$ for $1\le m\le n-2$. For $m<n-2$, \autoref{bideg} says that besides $g_{m+1}$, all the minimal generators of $\K_{m+1}\R(E_m)$ have $x$-degree less than $d_{m+1}$. Since $d_{m+1}\le d_{n-2}$, the only generator of $\K_{m+1}\R(E_m)$ that could possibly have $x$-degree at least $d_{n-2}$ is $g_{m+1}$. The remaining generators with $x$-degree at least $d_{n-2}$ must come from generators of $\K_{n-1}\R(E_{n-2})$.

The bidegrees of the minimal generators of $\K_{n-1}\R(E_{n-2})$ are given in \autoref{bideg}. If $\sigma_2=0$, then $r=1$, so we only consider those pairs $(\alpha_1,\alpha_2)$ with $\alpha_2=0$. Setting $j=\alpha_1$, we see that $\K_{n-1}\R(E_{n-2})$ has a minimal generator in bidegree $(d_{n-1}-j\sigma_1,j+1)$ for all $j$ with $j\sigma_1\le d_{n-1}-d_{n-2}$. If $\sigma_2>0$, then $r=2$. Set $i=\alpha_1$ and $j=\alpha_1+\alpha_2$. Then $\K_{n-1}\R(E_{n-2})$ has a minimal generator in bidegree $(d_{n-1}-i\sigma_1-(j-i)\sigma_2,j+1)$ for all $i,j$ with $i\sigma_1+(j-i)\sigma_2\le d_{n-1}-d_{n-2}$. If $\sigma_1>\sigma_2$, then these bidegrees are all distinct. For if $(d_{n-1}-i\sigma_1-(j-i)\sigma_2,j+1)=(d_{n-1}-i'\sigma_1-(j'-i')\sigma_2,j'+1)$, then certainly $j=j'$. But then $i(\sigma_1-\sigma_2)=i'(\sigma_1-\sigma_2)$, whence $i=i'$. If $\sigma_1=\sigma_2$, on the other hand, these bidegrees reduce to $(d_{n-1}-j\sigma_1,j+1)$, and there is one generator in this bidegree for each $0\le i\le j$, for a total of $j+1$ generators.
\end{proof}

The condition $\sigma_2=0$ in \autoref{kregencor} has geometric significance, as we see in the next proposition, a consequence of the results in \cite{ckpu13}. The proposition only holds if $d_{n-2}<d_{n-1}$, for if $d_{n-2}=d_{n-1}$, then the matrix $\varphi_{n-2}$ consisting of all but the last column of $\varphi$ is not an invariant. However, if $d_{n-2}=d_{n-1}$, then the only minimal generators of $\K$ having $x$-degree $\ge d_{n-2}$ are some of the $g_m$, so there is no difference between the cases in \autoref{kregencor}. The birationality assumption is also mild, as any rational curve $\C$ may be reparametrized so that the morphism $\Phi:\P^1\to\C$ is birational, as shown in \cite{kpu14}. It is not known whether there is any geometric meaning of the values of $\sigma_1$ and $\sigma_2$ when they are both positive.

\begin{prop}
Assume \autoref{reesdata} with $m=n-2$. Let $f_1,\ldots,f_n$ be a minimal generating set of $I$ and let $\Phi:\P^1\to \C\subset\P^{n-1}$ be the morphism defined by $(f_1:\cdots:f_n)$. Assume $\k$ is algebraically closed and $\Phi$ is birational onto its image $\C$, and assume $d_{n-2}<d_{n-1}$. Then $\sigma_2=0$ if and only if $\C$ has a point of multiplicity greater than $d_{n-2}$ (which must then have multiplicity $d_{n-1}$).
\end{prop}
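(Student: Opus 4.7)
The plan is to translate $\sigma_2 = 0$ into a simple condition on $\varphi_{n-2}$ and then verify both directions by computing minors of $\varphi$.

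The key first step is to observe that $F = E_{n-2}^{**}$ gives $E_{n-2}^* \cong F^* = R(-\sigma_1) \oplus R(-\sigma_2)$, so $\sigma_2 = 0$ iff $E_{n-2}^*$ has a nonzero element of degree $0$, iff some nonzero $\ell \in \k^n$ satisfies $\ell \cdot \varphi_{n-2} = 0$. A linear change of the generators of $I$ (equivalently, a coordinate change on $\P^{n-1}$) reduces this to: $\sigma_2 = 0$ iff the last row of $\varphi_{n-2}$ vanishes, with $p = (0{:}\cdots{:}0{:}1)$ as the candidate high-multiplicity point.

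For the forward direction, assume the last row of $\varphi_{n-2}$ is zero and write $\varphi = [\varphi_{n-2} \mid c]$. Then the last row of $\varphi$ is $(0,\ldots,0,c_n)$ with $c_n \ne 0$ (otherwise $f_1 = \cdots = f_{n-1} = 0$, contradicting $\Ht I = 2$). Laplace expansion along this row gives $f_i = \pm c_n g_i$ for $i < n$, where the $g_i$ exhaust the nonzero $(n-2)$-minors of $\varphi_{n-2}$; since $I_{n-2}(\varphi_{n-2})$ has height $2$ (as in the proof of \autoref{efree}), $\gcd(f_1,\ldots,f_{n-1}) = c_n$, of degree $d_{n-1}$. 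As $\Phi$ is birational and base-point-free, $p$ has multiplicity $d_{n-1}$ on $\C$.

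For the converse, suppose $\C$ has a point $p$ of multiplicity $\mu > d_{n-2}$. After moving $p$ to $(0{:}\cdots{:}0{:}1)$, birationality gives $h := \gcd(f_1,\ldots,f_{n-1})$ of degree $\mu$, with $\gcd(h,f_n) = 1$ since $\gcd(f_1,\ldots,f_n) = 1$. For each column $j$ of $\varphi_{n-2}$, the syzygy relation $\sum_i \varphi_{ij} f_i = 0$ gives $h \mid \varphi_{nj} f_n$, and hence $h \mid \varphi_{nj}$; but $\deg\varphi_{nj} = d_j \le d_{n-2} < \mu$, forcing $\varphi_{nj} = 0$. Thus the last row of $\varphi_{n-2}$ vanishes, so $\sigma_2 = 0$, and the forward analysis then shows $\mu = d_{n-1}$ exactly.

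The single step requiring care, and the one that invokes \cite{ckpu13}, is the identification of the multiplicity of $p \in \C$ with the degree of $\gcd(f_1,\ldots,f_{n-1})$ after $p$ is moved to $(0{:}\cdots{:}0{:}1)$; this is where the birationality hypothesis does its work, and everything else in the argument is a direct matrix calculation.
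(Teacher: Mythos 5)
Your argument is correct, and its skeleton matches the paper's: both proofs reduce $\sigma_2=0$ to the existence of a nonzero constant row vector annihilating $\varphi_{n-2}$ (you via $E_{n-2}^*\cong F^*=R(-\sigma_1)\oplus R(-\sigma_2)$, the paper via the dual resolution \eqref{eq:res2}), and both identify the high-multiplicity point with that row vector. The real divergence is in which multiplicity formula carries the load. The paper quotes \cite[Theorem~1.8]{ckpu13} in the form $\mathrm{mult}_p\C=\deg\gcd I_1(\underline{a}\varphi)$, i.e.\ the gcd of the entries of a row combination of the \emph{syzygy matrix}; since those entries have degrees $d_1,\ldots,d_{n-1}$ with $d_i\le d_{n-2}$ for $i\le n-2$, the whole equivalence drops out in two lines (the gcd exceeds degree $d_{n-2}$ iff the first $n-2$ entries vanish identically). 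You instead use the more elementary formula $\mathrm{mult}_p\C=\deg\gcd(f_1,\ldots,f_{n-1})$ for $p=(0{:}\cdots{:}0{:}1)$ -- the gcd of the pullbacks of hyperplanes through $p$ -- and then translate between that gcd and the last row of $\varphi$ by hand: Laplace expansion of the signed maximal minors in the forward direction, and the divisibility argument $h\mid\varphi_{nj}f_n$, $\gcd(h,f_n)=1$, $\deg\varphi_{nj}\le d_{n-2}<\deg h$ in the converse. Both translations are sound (in the forward direction you correctly use $\Ht I_{n-2}(\varphi_{n-2})=2$ to kill the residual gcd of the complementary minors). What your route buys is independence from the syzygy-theoretic form of the multiplicity theorem -- you only need the classical gcd description of the fiber over $p$ under a birational, base-point-free parametrization -- at the cost of an explicit coordinate change and a slightly longer matrix computation; the paper's route buys brevity and the ability to treat all points $p$ simultaneously without normalizing $p$ to a coordinate point.
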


\begin{proof}
Let $p=(a_1:\cdots:a_n)\in\P^{n-1}$ be a point. By \cite[Theorem~1.8]{ckpu13}, the multiplicity of $p$ on $\C$ is $\deg\gcd I_1(\underline{a}\varphi)$ where $\underline{a}=[a_1\ \cdots\ a_n]$. We may write $\underline{a}\varphi=[h_1\ \cdots\ h_{n-1}]$ where $h_i$ is homogeneous of degree $d_i$, so that the multiplicity of $p$ on $\C$ is $\deg\gcd(h_1,\ldots,h_{n-1})$. By assumption, $d_i<d_{n-1}$ for $1\le i\le n-2$. Thus the only way that the gcd of $h_1,\ldots,h_{n-1}$ can have degree greater than $d_{n-2}$ is if $h_1=\cdots=h_{n-2}=0$, in which case the gcd will have degree $d_{n-1}$. Therefore $\C$ has a point of multiplicity greater than $d_{n-2}$ if and only if some linear combination of the rows of $\varphi$ is zero except for the last column. This is the same as saying that some linear combination of the rows of $\varphi_{n-2}$ is zero. From the exact sequence \eqref{eq:res2}, this will occur if and only if $\sigma_2=0$.
\end{proof}

\subsection{A recursive procedure for the generators}

While \autoref{kregen} gives the generators as elements of $R[\w_1,\ldots,\w_s]$, we seek the generators as elements of $S=R[T_1,\ldots,T_n]$. In \autoref{kregenalg}, we show how the generators in $S$ may be computed. For specific examples, see \autoref{exgen0} and \autoref{ex2n}. The algorithm hinges on the following way to multiply by $w_i$ in $S$.

\begin{prop}\label{wmult}
Assume \autoref{reesdata}. For $1\le i\le s$, let $\xi_i$ be the $i^{th}$ row of $\xi$ and let $\xi^i$ be the matrix obtained by deleting the $i^{th}$ row from $\xi$. Let $\rho^i=[\rho^i_1\ \cdots\ \rho^i_{m+1}]$ be the $n\times(m+1)$ matrix whose columns generate the kernel of $\xi^i$. Set $p^i_j=\xi_i\rho^i_j\in R$ and set $q^i_j=[T_1\ \cdots\ T_n]\rho^i_j\in S_{*,1}$. Then
\begin{enumerate}
\item[(a)] For each $i$, $(p^i_1,\ldots,p^i_{m+1})_{d_m-1+\sigma_i}=R_{d_m-1+\sigma_i}$.
\item[(b)] For each $i$ and $j$, we have $q^i_j=p^i_jw_i$ in $\R(E)\subset\R(F)$.
\end{enumerate}
\end{prop}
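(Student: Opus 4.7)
My plan is to handle the two parts in opposite order: part (b) is a one-step matrix computation in $\R(F)$ using the description from \autoref{fpoly}, and that identity then feeds directly into part (a) when combined with the high-degree surjectivity of $\xi$ recorded in \autoref{fmode}(c).

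For part (b), \autoref{fpoly} gives $[T_1\ \cdots\ T_n]=[\w_1\ \cdots\ \w_s]\xi$ inside $\R(F)$. Right-multiplying both sides by $\rho^i_j$ produces
\[
q^i_j=[T_1\ \cdots\ T_n]\rho^i_j=[\w_1\ \cdots\ \w_s]\,\xi\rho^i_j.
\]
The $s\times 1$ column $\xi\rho^i_j$ has $k$th entry $\xi_k\rho^i_j$, which vanishes for $k\neq i$ because $\rho^i_j\in\ker\xi^i$, and equals $p^i_j$ for $k=i$ by definition. Hence $\xi\rho^i_j=p^i_j\,e_i$, where $e_i$ is the $i$th standard basis column of $R^s$, and the right-hand side collapses to $p^i_j\w_i$, proving (b).

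For part (a), the containment $(p^i_1,\ldots,p^i_{m+1})_{d_m-1+\sigma_i}\subseteq R_{d_m-1+\sigma_i}$ is automatic, so all the content is in the reverse inclusion. Given $\alpha\in R_{d_m-1+\sigma_i}$, I regard $\alpha$ as an element of degree $d_m-1$ in the $i$th summand $R(\sigma_i)$ of $F$, extended by zero in the other summands, and call it $\alpha\,e_i\in F_{d_m-1}$. By \autoref{fmode}(c), $(F/E)_{d_m-1}=0$, so $\xi$ (whose image is $E$) is surjective onto $F_{d_m-1}$, and I pick a homogeneous $v\in R^n_{d_m-1}$ with $\xi v=\alpha\,e_i$. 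Reading componentwise yields $\xi_iv=\alpha$ and $\xi^iv=0$, so $v\in\ker\xi^i$. Writing $v=\sum_j\rho^i_jb_j$ in terms of the given generators of $\ker\xi^i$ (with suitable homogeneous $b_j\in R$), I get $\alpha=\xi_iv=\sum_jp^i_jb_j$, placing $\alpha$ in $(p^i_1,\ldots,p^i_{m+1})$.

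No step here is technically deep; the argument is essentially a check against the definitions together with the cokernel vanishing from \autoref{fmode}(c). The one small point worth verifying is that $\ker\xi^i$ really can be generated by exactly $m+1$ elements, as the statement builds in. This holds because $\ker\xi^i$ is the kernel of a map between finitely generated free modules over the two-variable polynomial ring $R$, hence is a first syzygy of the torsion-free $R$-module $\xi^i(R^n)\subseteq F^i$ (which has projective dimension at most one), and therefore is itself free; a rank count using that $\xi^i$ agrees with the projection $F\to F^i$ after localizing away from $\m$ gives $\rank\ker\xi^i=n-(s-1)=m+1$.
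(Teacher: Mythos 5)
Your argument is correct and follows essentially the same route as the paper: part (b) is the same matrix computation from \autoref{fpoly} using $\rho^i_j\in\ker\xi^i$, and part (a) is the same lifting argument via the surjectivity of $\xi$ in degree $d_m-1$ coming from \autoref{fmode}(c). Your added verification that $\ker\xi^i$ is free of rank $m+1$ is a sound justification of a fact the paper builds into the statement without comment.
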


\begin{proof}
(a) Let $h\in R$ be a form of degree $d_m-1+\sigma_i$. Then $[0\ \cdots\ h\ \cdots\ 0]\tr\in F$ (with $h$ in the $i^{\mathrm{th}}$ position) has degree $d_m-1$. By \autoref{fmode}, $(F/E)_{d_m-1}=0$, and since $F/E=\coker\xi$ by \eqref{eq:res2}, $\xi$ is surjective in degree $d_m-1$. Thus there is $u\in R^n$ such that
\begin{equation}\label{eq:xiu}\xi u=\begin{bmatrix}0\\\vdots\\h\\\vdots\\0\end{bmatrix}.\end{equation}
This means that $u\in\ker\xi^i=\im\rho^i$, so $u=a_1\rho^i_1+\cdots+a_{m+1}\rho^i_{m+1}$ for some $a_1,\ldots,a_{m+1}\in R$. Then the $i^{\mathrm{th}}$ row of \eqref{eq:xiu} yields
\[h=\xi_i u=a_1\xi_i\rho^i_1+\cdots+a_{m+1}\xi_i\rho^i_{m+1}=a_1p^i_1+\cdots+a_{m+1}p^i_{m+1}.\]
Therefore $h\in(p^i_1,\ldots,p^i_{m+1})$ for all $h\in R_{d_m-1+\sigma_i}$, hence $(p^i_1,\ldots,p^i_{m+1})_{d_m-1+\sigma_i}=R_{d_m-1+\sigma_i}$.

(b) Since $\rho^i_j$ is in the kernel of $\xi^i$ (the matrix consisting of all but the $i^{\mathrm{th}}$ row of $\xi$),
\[\xi\rho^i_j=\begin{bmatrix}0\\\vdots\\\xi_i\rho^i_j\\\vdots\\0\end{bmatrix}=\begin{bmatrix}0\\\vdots\\p^i_j\\\vdots\\0\end{bmatrix}\] with $p^i_j$ appearing in the $i$th row. Thus, using \autoref{fpoly},
\[q^i_j=\begin{bmatrix}T_1&\cdots&T_n\end{bmatrix}\rho^i_j=\begin{bmatrix}\w_1&\cdots&\w_s\end{bmatrix}\xi\rho^i_j=\begin{bmatrix}\w_1&\cdots&\w_s\end{bmatrix}\begin{bmatrix}0\\\vdots\\p^i_j\\\vdots\\0\end{bmatrix}=p^i_j\w_i.\qedhere\]
\end{proof}

\begin{cor}\label{kregenalg}
Assume \autoref{reesdata}. Define the polynomials $p^i_j$ and $q^i_j$ as in \autoref{wmult}. For each $\ua\in\N^s$ with $\ua^0=0$ and $\as\le d_{m+1}-d_m$, we recursively construct forms $h_{\ua}\in S$ of bidegree $(d_{m+1}-\as,\sum_{i=1}^r\alpha_i+1)$ as follows. Set $h_{\underline{0}}=g_{m+1}$. For $\ua\neq\underline{0}$, choose any $i$ such that $\alpha_i>0$, and assume we have constructed $h_{(\alpha_1,\ldots,\alpha_i-1,\ldots,\alpha_s)}$. Write $h_{(\alpha_1,\ldots,\alpha_i-1,\ldots,\alpha_s)}=a_1p^i_1+\cdots+a_{m+1}p^i_{m+1}$ for some $a_1,\ldots,a_{m+1}\in S$ (which we may do by \autoref{wmult}(a) since the $x$-degree of $h_{(\alpha_1,\ldots,\alpha_i-1,\ldots,\alpha_s)}$ is at least $d_m+\sigma_i$), then set $h_{\ua}=a_1q^i_1+\cdots+a_{m+1}q^i_{m+1}$. Then the images of $\{h_{\ua}\}$ in $\R(E)$ are uniquely determined and form the $x$-degree $\ge d_m$ part of a minimal generating set of $\K_{m+1}\R(E)$.
\end{cor}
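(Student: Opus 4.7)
The plan is to prove by induction on $|\ua|:=\sum_{j=1}^r\alpha_j$ that the image of $h_{\ua}$ in $\R(E)\subset\R(F)$ coincides with the element $g_{m+1}\ut^{\ua}$ appearing in \autoref{kregen}. Once this identification is available, everything in the statement follows almost formally: the image of $h_\ua$ in $\R(E)$ is uniquely determined (independent of the choice of index $i$ and of decomposition $h_{\ua'}=\sum_j a_jp^i_j$) because the target $g_{m+1}\ut^{\ua}\in\R(F)$ is intrinsically attached to $\ua$, and the claim that $\{h_{\ua}\}$ yields the $x$-degree $\ge d_m$ part of a minimal generating set of $\K_{m+1}\R(E)$ is then precisely the content of \autoref{kregen}.

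The base case $\ua=\underline{0}$ is immediate from the definition $h_{\underline{0}}=g_{m+1}$. For the inductive step, fix $\ua$ with $\ua^0=0$ and $\as\le d_{m+1}-d_m$, choose $i$ with $\alpha_i>0$, and set $\ua'=\ua-e_i$. The first thing to verify is that the required decomposition $h_{\ua'}=a_1p^i_1+\cdots+a_{m+1}p^i_{m+1}$ with $a_j\in S$ actually exists; this is where \autoref{wmult}(a) enters. Since $h_{\ua'}$ is bihomogeneous of $x$-degree $d_{m+1}-\ua'\cdot\us=d_{m+1}-\as+\sigma_i\ge d_m+\sigma_i$ (using the hypothesis $\as\le d_{m+1}-d_m$), collecting $h_{\ua'}$ by $T$-monomial shows that each of its $R$-coefficients lies in $R_j$ for some $j\ge d_m+\sigma_i$. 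By \autoref{wmult}(a) we have $R_{d_m-1+\sigma_i}\subset(p^i_1,\ldots,p^i_{m+1})$, and multiplying by $R$ upgrades this to $R_{\ge d_m-1+\sigma_i}\subset(p^i_1,\ldots,p^i_{m+1})$, so each coefficient expands in the $p^i_j$ and yields the desired $a_j\in S$.

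With the decomposition in hand, the inductive step is carried out by applying \autoref{wmult}(b), which gives $q^i_j=p^i_jw_i$ in $\R(E)\subset\R(F)$. Using the inductive hypothesis that the image of $h_{\ua'}$ in $\R(F)$ is $g_{m+1}\ut^{\ua'}$, we compute in $\R(F)$:
\[
h_{\ua}\;=\;\sum_j a_jq^i_j\;=\;\Bigl(\sum_j a_jp^i_j\Bigr)w_i\;=\;h_{\ua'}w_i\;=\;g_{m+1}\ut^{\ua'}w_i\;=\;g_{m+1}\ut^{\ua},
\]
closing the induction. The main obstacle is the verification that the decomposition $h_{\ua'}=\sum_j a_jp^i_j$ is possible at every stage of the recursion; once this degree-counting point is handled via \autoref{wmult}(a), the remainder is a straightforward computation inside $\R(F)$ followed by a direct appeal to \autoref{kregen}.
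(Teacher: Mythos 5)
Your proof is correct and follows essentially the same route as the paper: induction on $\sum\alpha_i$ showing $h_{\ua}=g_{m+1}\ut^{\ua}$ in $\R(E)$ via \autoref{wmult}(b), then invoking \autoref{kregen}. The only difference is that you spell out the degree count justifying the existence of the decomposition $h_{\ua'}=\sum_ja_jp^i_j$ via \autoref{wmult}(a), which the paper asserts in the statement without elaboration; your verification is accurate.
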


\begin{proof}
Let $A=\{\ua\in\N^s\mid\ua^0=0\text{ and }\as\le d_{m+1}-d_m\}$. By \autoref{kregen}, the $x$-degree $\ge d_m$ part of a minimal generating set for $\K_{m+1}\R(E)$ is given by $\{g_{m+1}\ut^{\ua}\mid\ua\in A\}$. Thus we are done if we can show that $h_{\ua}=g_{m+1}\ut^{\ua}$ in $\R(E)$. We use induction on $\ell:=\sum\alpha_i$. If $\ell=0$, then $\ua=\underline{0}$, and by definition $h_{\underline{0}}=g_{m+1}=g_{m+1}\ut^{\underline{0}}$.

Now suppose $\ell>0$. By construction, there is $i$ for which $\alpha_i>0$ and $a_1,\ldots,a_{m+1}\in S$ such that $h_{(\alpha_1,\ldots,\alpha_i-1,\ldots,\alpha_s)}=a_1p^i_1+\cdots+a_{m+1}p^i_{m+1}$ and $h_{\ua}=a_1q^i_1+\cdots+a_{m+1}q^i_{m+1}$. Then
\begin{align*}
h_{\ua}&=a_1q^i_1+\cdots+a_{m+1}q^i_{m+1}\\
&=a_1p^i_1w_i+\cdots+a_{m+1}p^i_{m+1}w_i\text{ by \autoref{wmult}(b)}\\
&=h_{(\alpha_1,\ldots,\alpha_i-1,\ldots,\alpha_s)}\w_i\\
&=(g_{m+1}\w_1^{\alpha_1}\cdots\w_i^{\alpha_i-1}\cdots \w_s^{\alpha_s})w_i\text{ by induction}\\
&=g_{m+1}\ut^{\ua}.
\end{align*}
Therefore $h_{\ua}=g_{m+1}\ut^{\ua}$ for all $\alpha\in A$.
\end{proof}

When $m=1$, the process in \autoref{kregenalg} may be represented in terms of \emph{Sylvester forms} (or \emph{Jacobian duals}). In fact, when $n=3$ (so that there are only two $\sigma$s) and $\sigma_2=0$, the procedure in \autoref{kregenalg} agrees with the one given in \cite[Theorem~2.10]{cd13b}. It is also similar to the iterated Jacobian dual construction in \cite[\S4]{bm15}, although we use Sylvester forms with respect to multiple regular sequences $p^i_1,p^i_2$. Before stating the result, we give the definition of Sylvester forms.

Let $p_1,p_2\in R$ be homogeneous polynomials which form a regular sequence. Consider forms $f,g\in S$ such that $f,g\in(p_1,p_2)S$. Then we may (non-uniquely) write $f=f_1p_1+f_2p_2$ and $g=g_1p_1+g_2p_2$. We will, by abuse of notation, refer to the determinant
\[\syl_{p_1,p_2}(f,g)=\det\begin{bmatrix}f_1&g_1\\f_2&g_2\end{bmatrix}\]
as the {\em Sylvester form} of $f,g$ with respect to $p_1,p_2$. The Sylvester form $\syl_{p_1,p_2}(f,g)$ is not uniquely determined, but its image in $S/(f,g)$ is unique (\cite[Proposition~3.8.1.6]{jou95}).

\begin{prop}
Assume \autoref{reesdata} with $m=1$. Define $p^i_1,p^i_2$ as in \autoref{wmult}. Then for all $i$, the polynomials $p^i_1,p^i_2$ form a regular sequence, and for any form $h\in S$ with $x$-degree $\ge d_1-1+\sigma_i$, the image of the Sylvester form $\syl_{p^i_1,p^i_2}(g_1,h)$ in $\R(E)=S/(g_1)$ is a nonzero scalar multiple of $hw_i$. In particular, the polynomials $h_{\ua}$ of \autoref{kregenalg} may be defined, up to scalar multiples, as $h_{\ua}=\syl_{p^i_1,p^i_2}(g_1,h_{(\alpha_1,\ldots,\alpha_i-1,\ldots,\alpha_s)})$ for any $i$ such that $\alpha_i>0$.
\end{prop}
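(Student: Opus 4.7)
The plan is to verify the three claims in turn: the regularity of $p^i_1,p^i_2$ is essentially immediate, the Sylvester form is well-defined because both $g_1$ and $h$ land in $(p^i_1,p^i_2)S$, and its image in $\R(E)$ will be computed by combining the standard Sylvester-form identity with $\xi\varphi_1=0$ and \autoref{wmult}(b). For the first piece, \autoref{wmult}(a) shows that $(p^i_1,p^i_2)$ contains $R_{d_1-1+\sigma_i}$, hence is $\m$-primary in the two-dimensional Cohen--Macaulay ring $R$, so $p^i_1,p^i_2$ is a regular sequence.

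To place $g_1$ in $(p^i_1,p^i_2)S$, I would use the exactness of \eqref{eq:res2} with $m=1$, which gives $\xi\varphi_1=0$. Since $\ker\xi\subseteq\ker\xi^i=\im\rho^i$, one writes $\varphi_1=b_1\rho^i_1+b_2\rho^i_2$ for some $b_1,b_2\in R$; applying $\xi$ and using $\xi\rho^i_j=p^i_je_i$ then yields $b_1p^i_1+b_2p^i_2=0$. Regularity of $p^i_1,p^i_2$ forces $b_1=cp^i_2$ and $b_2=-cp^i_1$ for some $c\in R$, and multiplying $\varphi_1$ from the left by $[T_1,\ldots,T_n]$ produces $g_1=-cq^i_2\cdot p^i_1+cq^i_1\cdot p^i_2$. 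Meanwhile the hypothesis on $h$ places each of its $T$-coefficients in $(p^i_1,p^i_2)$ by \autoref{wmult}(a) once more, so $h=h_1p^i_1+h_2p^i_2$ as well.

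With these decompositions in hand, the classical identity $p^i_2\syl_{p^i_1,p^i_2}(g_1,h)=-cq^i_2h-g_1h_1$ reduces modulo $g_1$ to $p^i_2\syl\equiv -cq^i_2h$ in $\R(E)$, and \autoref{wmult}(b) substitutes $q^i_2=p^i_2w_i$. Canceling $p^i_2$ in the domain $\R(E)$ then gives $\syl\equiv -cw_ih$. The crux is showing $c\in\k^\times$: here I would use that $p^i_2\rho^i_1-p^i_1\rho^i_2$ is visibly annihilated by $\xi$, so it lies in $\ker\xi\cong R(-d_1)$, the rank-one free module generated by $\varphi_1$, and therefore equals $d\varphi_1$ for some $d\in R$; combining with $\varphi_1=c(p^i_2\rho^i_1-p^i_1\rho^i_2)$ yields $cd=1$, forcing $c$ to be a homogeneous unit in $R$ and hence an element of $\k^\times$.

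The recursive characterization then follows by induction on $\sum\alpha_i$. For any $\ua$ arising in \autoref{kregenalg}, the constraint $\as\le d_2-d_1$ ensures $h_{(\alpha_1,\ldots,\alpha_i-1,\ldots,\alpha_s)}$ has $x$-degree $d_2-\as+\sigma_i\ge d_1-1+\sigma_i$, so the Sylvester form is defined; its image in $\R(E)$ equals $-cw_ih_{(\alpha_1,\ldots,\alpha_i-1,\ldots,\alpha_s)}$, which by induction is a nonzero scalar multiple of $g_2\ut^{\ua}=h_{\ua}$. The main obstacle is the unit argument for $c$, and it is precisely the rank-one-ness of $\ker\xi$ specific to $m=1$ that reduces this to the equation $cd=1$; the same simple Sylvester-form interpretation would not extend to general $m$.
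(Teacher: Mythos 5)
Your proposal is correct, and its overall skeleton matches the paper's: factor $\varphi_1$ through $\rho^i$, deduce $g_1=\pm c(p^i_1q^i_2-p^i_2q^i_1)$, show $c\in\k^\times$, and then evaluate the Sylvester form using \autoref{wmult}(b). Two sub-steps, however, are carried out differently, and both of your variants check out. First, to see that $c$ is a unit, the paper computes the column degrees of $\rho^i$ via the Hilbert polynomial of the Artinian module $\coker\xi^i$, obtaining $\delta^i_1+\delta^i_2=d_1-\sigma_i$, and then matches $x$-degrees in $g_1=c(p^i_1q^i_2-p^i_2q^i_1)$ to force $\deg c=0$ with $c\neq0$. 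You instead observe that $p^i_2\rho^i_1-p^i_1\rho^i_2$ is killed by every row of $\xi$, hence lies in $\ker\xi=\im\varphi_1\cong R(-d_1)$, and equals $d\varphi_1$; combined with $\varphi_1=c(p^i_2\rho^i_1-p^i_1\rho^i_2)$ this gives $cd=1$ outright. This is cleaner: it avoids the Hilbert polynomial computation entirely and makes transparent that the argument hinges on $\ker\xi$ having rank one, i.e.\ on $m=1$. Second, to evaluate the Sylvester form, the paper decomposes $h=a_1p^i_1+a_2p^i_2$ and expands the determinant directly to get $c(a_1q^i_1+a_2q^i_2)=chw_i$; you instead use the identity $p^i_2\syl_{p^i_1,p^i_2}(g_1,h)\equiv -cq^i_2h\pmod{g_1}$ and cancel $p^i_2$ in the domain $\R(E)$. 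That is also valid ($p^i_2\neq0$ since $p^i_1,p^i_2$ is a regular sequence), though the paper's direct expansion is marginally more economical since you already need the decomposition of $h$ to write the Sylvester form in the first place. The regular sequence claim, the membership $g_1,h\in(p^i_1,p^i_2)S$, and the concluding induction on $\sum\alpha_i$ are handled exactly as in the paper.
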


\begin{proof}
That $p^i_1,p^i_2$ form a regular sequence is immediate from \autoref{wmult}(a). Recall the matrix $\rho^i$ from \autoref{wmult}. Let $\delta^i_1,\delta^i_2$ be the degrees of the columns of $\rho^i$. We claim that $\delta^i_1+\delta^i_2=d_1-\sigma_i$. First, since $\xi^i$ is all but the $i$th row of $\xi$, there is a surjection $\coker\xi\twoheadrightarrow\coker\xi^i$. By \autoref{fmode}(a,b), $\coker\xi$ is Artinian, so $\coker\xi^i$ is Artinian as well. Since $R$ is a two-dimensional regular ring, there is a free resolution \[0\to R(-\delta^i_1)\oplus R(-\delta^i_2)\xrightarrow{\rho^i}R^n\xrightarrow{\xi^i}\bigoplus_{j\neq i}R(\sigma_j)\to\coker\xi^i\to0.\] By computing the Hilbert polynomial and using that $\coker\xi^i$ is Artinian, we see that $\delta^i_1+\delta^i_2=\sum_{j\neq i}\sigma_j$. But $\sum\sigma_j=d_1$ by \autoref{fmode}(e), hence $\delta^i_1+\delta^i_2=d_1-\sigma_i$.

Now let $h\in\R(E)$ be homogeneous of $x$-degree $\ge d_1-1+\sigma_i$. Then by \autoref{wmult}(a), there are $a_1,a_2\in\R(E)$ such that $h=a_1p^i_1+a_2p^i_2$. Recall that $\rho^i$ generates the kernel of $\xi^i$, the matrix obtained by removing the $i$th row of $\xi$. Since $\xi\varphi_1=0$ from \eqref{eq:res2}, $\varphi_1$ is in the kernel of $\xi^i$, therefore $\varphi_1=\rho^i\lambda$ for some $2\times1$ column vector $\lambda$ with entries in $R$. Then
\[\begin{bmatrix}p^i_1&p^i_2\end{bmatrix}\lambda=\xi_i\rho^i\lambda=\xi_i\varphi_1=0.\]
Since $p^i_1,p^i_2$ are a regular sequence, this means \[\lambda=\begin{bmatrix}-cp^i_2\\cp^i_1\end{bmatrix}\]
for some $c\in R$. Thus \begin{equation}\label{eq:g1fac}g_1=\begin{bmatrix}T_1&\cdots&T_n\end{bmatrix}\varphi_1=\begin{bmatrix}T_1&\cdots&T_n\end{bmatrix}\rho^i\lambda=\begin{bmatrix}q^i_1&q^i_2\end{bmatrix}\lambda=c(p^i_1q^i_2-p^i_2q^i_1).\end{equation}
Now $p^i_j$ has $x$-degree $\sigma_i+\delta_j$, while $q^i_j$ has $x$-degree $\delta_j$. Because we have shown that $\delta^i_1+\delta^i_2=d_1-\sigma_i$, we have $\deg_x(p^i_1q^i_2-p^i_2q^i_1)=\sigma_i+\delta^i_1+\delta^i_2=d_1=\deg_xg_1$, hence $c\in\k$. We cannot have $c=0$, or we would get $\varphi_1=\rho^i\lambda=\rho^i\cdot0=0$, which is impossible. Therefore $c$ is a unit. Since $h=a_1p^i_1+a_2p^i_1$ and $g_1=(cq^i_2)p^i_1+(-cq^i_1)p^i_2$ by \eqref{eq:g1fac}, the definition of the Sylvester form yields
\[\syl_{p^i_1,p^i_2}(g_1,h)=\det\begin{bmatrix}cq^i_2&a_1\\-cq^i_1&a_2\end{bmatrix}=c(a_1q^i_1+a_2q^i_2).\]
But by \autoref{wmult}(b), $a_1q^i_1+a_2q^i_2=a_1p^i_1w_i+a_2p^i_2w_i=hw_i$, so $\syl_{p^i_1,p^i_2}(g_1,h)=chw_i$. Because we showed in the proof of \autoref{kregenalg} that $h_{\ua}=h_{(\alpha_1,\ldots,\alpha_i-1,\ldots,\alpha_s)}w_i$ in $\R(E)$, the final claim follows.
\end{proof}

\subsection{Some examples}

\begin{ex}[compare {\cite[Theorem~3.6]{kpu11}}]\label{exalp}
Assume $I$ is {\em almost linearly presented}, meaning $d_1=\cdots=d_{n-2}=1$ and $d_{n-1}=c\ge1$. First, \autoref{fmode}(e) gives $\sigma_1+\sigma_2=\sum_{i=1}^{n-2}d_i=n-2$. By \autoref{remint}, the modules $E=E_{n-2}$ and $M=\m^{\sigma_1}(\sigma_1)\oplus\m^{\sigma_2}(\sigma_2)$ agree in degree $\ge0$, but they are both generated in degree 0, so $E\cong M$. Thus by \autoref{rns}, after an automorphism of $S=R[T_1,\ldots,T_n]$, the ideal of equations of $\R(E)$ is generated by the $2\times2$ minors of the matrix
\[\left[\begin{array}{c|cccc|cccc}x_0&T_1&T_2&\cdots&T_{\sigma_1}&T_{\sigma_1+2}&T_{\sigma_1+3}&\cdots&T_{n-1}\\x_1&T_2&T_3&\cdots&T_{\sigma_1+1}&T_{\sigma_1+3}&T_{\sigma_1+4}&\cdots&T_n\end{array}\right].\]
In particular, there are $n-2$ generators of bidegree $(1,1)$ and $\frac12(n-2)(n-3)$ generators of bidegree $(0,2)$. The ideal $\K$ of equations of $\R(I)$ is generated by these equations, plus a lift of a minimal generating set of $\K\R(E)$.

By \eqref{eq:jrf} with $m=n-2$, this ideal is $\K\R(E)=g_{n-1}\R(F)_{\ge-c,*}$ where $F=R(\sigma_1)\oplus R(\sigma_2)$. We may compute the generators of this ideal directly from \autoref{rfgen}: they are just the elements of $g_{n-1}\Ac\cup g_{n-1}\Bc$. The set $g_{n-1}\Ac$ consists of the minimal generators of $\K\R(E)$ having $x$-degree $>0$, and equals
\[\{g_{n-1}w_1^{\alpha_1}w_2^{\alpha_2}\mid\alpha_1\sigma_1+\alpha_2\sigma_2\le c-1\}\text{ if }\sigma_2>0\]
\[\{g_{n-1}w_1^{\alpha_1}\mid\alpha_1\sigma_1\le c-1\}\text{ if }\sigma_2=0.\]
We can translate from $w$'s to $T$'s by the equations $T_{j+1}=x_0^{\sigma_1-j}x_1^jw_1$ and $T_{\sigma_1+j+2}=x_0^{\sigma_2-j}x_1^jw_2$ (see \autoref{rns}). The bidegrees of these minimal generators are listed in \autoref{kregencor}.

We can also compute the minimal generators of $x$-degree 0, which are the elements of $g_{n-1}\Bc$. To do this, we must first determine the set $\Omega_c$ of \autoref{dfrfgen}. There are two cases. First, suppose $\sigma_2=0$. Then $r=1$, so
\[\Omega_c=\Omega_{c,1}=\{(\beta_1,0)\mid c\le\beta_1\sigma_1<c+\sigma_1\}.\]
This is just $\Omega_c=\{(k,0)\}$ where $k=\lceil\frac{c}{\sigma_1}\rceil$. Let $\ell=k\sigma_1-c$; note that $0\le\ell<\sigma_1$. Then $g_{n-1}\Bc$ consists of $\ell+1$ elements of bidegree $(0,k+1)$, namely
\[\{g_{n-1}x_0^{\ell-j}x_1^jw_1^k\mid0\le j\le\ell\}.\]

Second, suppose $\sigma_2>0$. Then $r=2$, so $\Omega_c=\Omega_{c,1}\cup\Omega_{c,2}$. As above, $\Omega_{c,1}=\{(k,0)\}$ where $k=\lceil\frac{c}{\sigma_1}\rceil$. Again, setting $\ell=k\sigma_1-c$, $g_{n-1}\Bc$ has $\ell+1$ elements of bidegree $(0,k+1)$, namely
\[\{g_{n-1}x_0^{\ell-j}x_1^jw_1^k\mid0\le j\le\ell\}.\]
In this case, however, we also have
\[\Omega_{c,2}=\{(\alpha_1,\alpha_2)\mid\alpha_2>0\text{ and }c\le\alpha_1\sigma_1+\alpha_2\sigma_2<c+\sigma_2\}.\]
To have $(\alpha_1,\alpha_2)\in\Omega_{c,2}$, we must have $\alpha_1\sigma_1<c+\sigma_2$, thus $\alpha_1<\lceil\frac{c+\sigma_2}{\sigma_1}\rceil$. Once we have chosen $\alpha_1$, the condition $c-\alpha_1\sigma_1\le\alpha_2\sigma_2<c-\alpha_1\sigma_1+\sigma_2$ forces $\alpha_2=\lceil\frac{c-\alpha_1\sigma_1}{\sigma_2}\rceil$. Hence $(\alpha_1,\alpha_2)\in\Omega_{c,2}$ come in pairs of the form $(i,v(i))$ where $0\le i<\lceil\frac{c+\sigma_2}{\sigma_1}\rceil$ and $v(i)=\lceil\frac{c-i\sigma_1}{\sigma_2}\rceil$. Let $\ell(i)=i\sigma_1+v(i)\sigma_2-c$, so that $0\le\ell(i)<\sigma_2$. Therefore the remaining part of $g_{n-1}\mathcal{B}_c$ consists of $\ell(i)+1$ generators of bidegree $(0,i+v(i)+1)$ for each $0\le i<\lceil\frac{c+\sigma_2}{\sigma_1}\rceil$, namely \[\{g_{n-1}x_0^{\ell(i)-j}x_1^jw_1^iw_2^{v(i)}\mid0\le i<\left\lceil\frac{c+\sigma_2}{\sigma_1}\right\rceil\}.\qedhere\]
\end{ex}

\begin{ex}[compare {\cite[Theorem~2.10]{cd13b}} and {\cite[Corollary~3.10]{kpu13}}]\label{exgen0}
Assume $n=3$ and assume that the entries of the first column of $\varphi$ are linearly dependent. Thus after row operations on $\varphi$, we have
\[\varphi=\begin{bmatrix}\gamma_1&\delta_1\\\gamma_2&\delta_2\\0&\delta_3\end{bmatrix}.\]
Since $\varphi_1=[\gamma_1\ \gamma_2\ 0]\tr$, the resolution \eqref{eq:res1} becomes
\[0\to R(-d_1)\oplus R\xrightarrow{\xi\tr}R^3\xrightarrow{\varphi_1\tr}R(d_1)\to\Ext^2_R(F/E,R)\to0\]
where the image of $\xi\tr$ is the kernel of $[\gamma_1\ \gamma_2\ 0]$, meaning
\[\xi=\begin{bmatrix}-\gamma_2&\gamma_1&0\\0&0&1\end{bmatrix}.\]
In particular, $\sigma_1=d_1$, $\sigma_2=0$, $s=2$, and $r=1$. Then by \autoref{kregen}, the minimal generators of $\K$ besides $g_1$ having $x$-degree $\ge d_1$ are $g_2\w_1^{\alpha_1}$ for $\alpha_1\in\N$ such that $\alpha_1\sigma_1\le d_2-d_1$, or in other words, $0\le\alpha_1\le\frac{d_2}{d_1}-1$.

To compute these generators, we will use \autoref{kregenalg}. The matrix $\rho^1$ defined in \autoref{wmult} is the syzygy matrix of the matrix obtained by deleting the first row of $\xi$, which is $[0\ 0\ 1]$, thus
\[\rho^1=\begin{bmatrix}1&0\\0&1\\0&0\end{bmatrix}.\]
Following \autoref{wmult}, we have
\[\begin{bmatrix}p^1_1&p^1_2\end{bmatrix}=\begin{bmatrix}-\gamma_2&\gamma_1&0\end{bmatrix}\rho^1=\begin{bmatrix}-\gamma_2&\gamma_1\end{bmatrix}\]
and
\[\begin{bmatrix}q^1_1&q^1_2\end{bmatrix}=\begin{bmatrix}T_1&T_2&T_3\end{bmatrix}\rho^1=\begin{bmatrix}T_1&T_2\end{bmatrix}.\]
Now \autoref{kregenalg} says that the generators of $\K$ can be computed recursively. Let $h_{(0,0)}=g_2$, and for any $1\le\alpha_1\le\frac{d_2}{d_1}-1$, write $h_{(\alpha_1-1,0)}=a_1p^1_1+a_2p^1_2=-a_1\gamma_2+a_2\gamma_1$, then set $h_{(\alpha_1,0)}=a_1q^1_1+a_2q^1_2=a_1T_1+a_2T_2$. These $h_{(\alpha_1,0)}$, together with $g_1$, are all the minimal generators of $\K$ having $x$-degree at least $d_1$. We will determine the minimal generators of $x$-degree $d_1-1$ in \autoref{exgen0II}.
\end{ex}

\begin{ex}[compare {\cite[Proposition~3.4]{bus09} and {\cite[Theorem~5.4]{cd13b}}}]\label{ex2n}
Assume $n=3$ and $d_1=2$. We may assume that the entries of the first column of $\varphi$ are linearly independent, for the other case was considered in \autoref{exgen0}. Then after row operations, we may write
\[\varphi=\begin{bmatrix}x_0^2&\delta_1\\x_0x_1&\delta_2\\x_1^2&\delta_3\end{bmatrix}.\]
Then the resolution \eqref{eq:res1} is
\[0\to R(-1)\oplus R(-1)\xrightarrow{\xi\tr}R^3\xrightarrow{\varphi_1\tr}R(2)\to\Ext^2_R(F/E,R)\to0\]
where
\[\xi=\begin{bmatrix}-x_1&x_0&0\\0&-x_1&x_0\end{bmatrix}.\]
In particular, $\sigma_1=\sigma_2=1$, $s=2$, and $r=0$. By \autoref{kregen}, the minimal generators of $\K$ besides $g_1$ having $x$-degree $\ge 2$ are $g_2\w_1^{\alpha_1}\w_2^{\alpha_2}$ for $\alpha_1,\alpha_2\in\N$ such that $\alpha_1+\alpha_2\le d_2-2$.

Now let us use \autoref{kregenalg} to compute the precise generating set. Let $\rho^1$ and $\rho^2$ be the syzygy matrices of the matrices obtained by deleting the first and second row of $\xi$, respectively. That is,
\[\rho^1=\begin{bmatrix}1&0\\0&x_0\\0&x_1\end{bmatrix}\text{ and }\rho^2=\begin{bmatrix}0&x_0\\0&x_1\\1&0\end{bmatrix}.\]
Therefore
\[\begin{bmatrix}p^1_1&p^1_2\end{bmatrix}=\begin{bmatrix}-x_1&x_0&0\end{bmatrix}\rho^1=\begin{bmatrix}-x_1&x_0^2\end{bmatrix}\]
\[\begin{bmatrix}p^2_1&p^2_2\end{bmatrix}=\begin{bmatrix}0&-x_1&x_0\end{bmatrix}\rho^2=\begin{bmatrix}x_0&-x_1^2\end{bmatrix}\]
\[\begin{bmatrix}q^1_1&q^1_2\end{bmatrix}=\begin{bmatrix}T_1&T_2&T_3\end{bmatrix}\rho^1=\begin{bmatrix}T_1&x_0T_2+x_1T_3\end{bmatrix}\]
\[\begin{bmatrix}q^2_1&q^2_2\end{bmatrix}=\begin{bmatrix}T_1&T_2&T_3\end{bmatrix}\rho^2=\begin{bmatrix}T_3&x_0T_1+x_1T_2\end{bmatrix}.\]
Now we are ready to apply the algorithm from \autoref{kregenalg}. Let $h_{(0,0)}=g_2$. Suppose we have computed $h_{(\alpha_1,\alpha_2)}$ for some $\alpha_1,\alpha_2$ with $\alpha_1+\alpha_2<d_2-2$. Write $h_{(\alpha_1,\alpha_2)}=-a_1x_1+a_2x_0^2=b_1x_0-b_2x_1^2$. (We can do this since $\deg_xh_{(\alpha_1,\alpha_2)}\ge2$.) Set $h_{(\alpha_1+1,\alpha_2)}=a_1T_1+a_2(x_0T_2+x_1T_3)$ and $h_{(\alpha_1,\alpha_2+1)}=b_1T_3+b_2(x_0T_1+x_1T_2)$. By \autoref{kregenalg}, the polynomials $h_{(\alpha_1,\alpha_2)}$ constructed in this way coincide with the generators $g_2\w_1^{\alpha_1}\w_2^{\alpha_2}$.
\end{ex}

\section{The case of plane curves}

Let $n=3$, so that $I=(f_1,f_2,f_3)$ and $\C$ is a plane curve. In this case, we can actually compute a generating set for each slice $\K_{i,*}$ with $i\ge d_1-1$, in \autoref{kregen2}. In particular, we get a generating set of $\K_{\ge d_1-1,*}$ by combining the generators of $\K_{d_1-1,*}$ from \autoref{kregen2} with the minimal generators of $\K$ in $x$-degree $\ge d_1$ from \autoref{kregen}. Unfortunately, the generating set for $\K_{i,*}$ in \autoref{kregen2} is usually not minimal.

Observe that $\K_1=(g_1):\m^\infty=(g_1)$, meaning $\R(E_1)=\Sym(E_1)=S/(g_1)$. Thus to determine $\K$, it suffices to compute $\K/\K_1=\K\R(E_1)=\K_2\R(E_1)$. To accomplish this, we will first determine the $\k[T_1,T_2,T_3]$-module structure of $\R(F)_{i,*}$. This is carried out in the next two lemmas and \autoref{refup}, where we show that $\R(F)_{i,*}$ is generated as a $\k[T_1,T_2,T_3]$-module in $T$-degrees 0 and 1 for all $i\ge-1$.

\begin{lem}\label{mxfac}
Let $\zeta$ be an $n\times(n-1)$ matrix and let $\eta$ be an $(n-1)\times n$ matrix with coefficients in $\m$, such that $\eta\zeta=fI_{n-1}$ for some $f\in R$. Assume that $\grade I_{n-1}(\zeta)=2$. Then $f\in I_{n-1}(\zeta)$.
\end{lem}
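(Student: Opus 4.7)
The plan is to combine the Hilbert-Burch complex attached to $\zeta$ with a simple trace identity on an auxiliary $n\times n$ matrix. Let $\psi_i$ denote $(-1)^{i+1}$ times the determinant of the $(n-1)\times(n-1)$ matrix obtained from $\zeta$ by deleting row $i$, and set $\psi=[\psi_1\ \cdots\ \psi_n]$. Then $I_{n-1}(\zeta)=(\psi_1,\ldots,\psi_n)$, and Laplace expansion gives $\psi\zeta=0$. Since $\grade I_{n-1}(\zeta)=2$ and $R=\k[x_0,x_1]$ is a two-dimensional regular ring, the Hilbert-Burch complex
\[0\to R^{n-1}\xrightarrow{\zeta}R^n\xrightarrow{\psi}R\to R/I_{n-1}(\zeta)\to0\]
is exact, exhibiting $R/I_{n-1}(\zeta)$ as a perfect module of grade $2$. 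Hence $\Ext^i(R/I_{n-1}(\zeta),R)=0$ for $i<2$, and applying $\Hom(-,R)$ yields an exact sequence $0\to R\xrightarrow{\psi\tr}R^n\xrightarrow{\zeta\tr}R^{n-1}$. In particular, every row vector $\beta\in R^n$ with $\beta\zeta=0$ is of the form $\beta=c\psi$ for a unique $c\in R$.

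Next, form the $n\times n$ matrix $B=\zeta\eta-fI_n$. The hypothesis $\eta\zeta=fI_{n-1}$ gives $B\zeta=\zeta(\eta\zeta)-f\zeta=f\zeta-f\zeta=0$, so each row $\beta_i$ of $B$ satisfies $\beta_i\zeta=0$ and thus equals $c_i\psi$ for some $c_i\in R$. The diagonal entries therefore read $B_{ii}=c_i\psi_i$, so $\sum_{i=1}^n B_{ii}\in I_{n-1}(\zeta)$.

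Finally, compute $\operatorname{tr}(B)$ the other way: since $\operatorname{tr}(\zeta\eta)=\operatorname{tr}(\eta\zeta)=\operatorname{tr}(fI_{n-1})=(n-1)f$, we have $\operatorname{tr}(B)=(n-1)f-nf=-f$. Combining with the previous identity gives $f=-\sum_i c_i\psi_i\in I_{n-1}(\zeta)$, as desired.

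The only substantive step is the identification of row-syzygies of $\zeta$ with $R\psi$, which uses the exactness of the dualized Hilbert-Burch complex (equivalently, the perfectness of $R/I_{n-1}(\zeta)$); everything else is bookkeeping with the trace. Incidentally, the hypothesis that $\zeta$ and $\eta$ have entries in $\m$ does not appear to be used in this argument and is presumably needed only for the intended applications of the lemma.
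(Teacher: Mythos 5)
Your proof is correct, but it takes a genuinely different route from the paper's. Both arguments turn on the same key fact: since $R/I_{n-1}(\zeta)$ is perfect of grade $2$, the dualized Hilbert--Burch complex is exact, so every row vector annihilating $\zeta$ on the right is an $R$-multiple of the row $\psi$ of signed maximal minors (the paper calls this row $\chi$ and phrases the fact as $\im(\chi\tr)=\ker(\zeta\tr)$). Where you diverge is in how this is exploited. The paper resolves $\coker\eta$, extracts a rank-one syzygy $\vartheta$ of $\eta$, writes $\zeta\eta-fI_n=\vartheta\lambda$ with $\lambda=g\chi$, and then bordera $\zeta$ and $\eta$ to square $n\times n$ matrices $\zeta'=[\zeta\ \ -g\vartheta]$, $\eta'=\left[\begin{smallmatrix}\eta\\ \chi\end{smallmatrix}\right]$ satisfying $\zeta'\eta'=fI_n$; the square-matrix identity $\zeta'\eta'=fI_n\Rightarrow\eta'\zeta'=fI_n$ then exhibits $f=\chi(-g\vartheta)\in I_{n-1}(\zeta)$ as a corner entry. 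Your argument short-circuits all of this: the observation that \emph{every} row of $B=\zeta\eta-fI_n$ is a syzygy, hence a multiple $c_i\psi$, puts the whole diagonal of $B$ inside $I_{n-1}(\zeta)$, and the elementary identity $\operatorname{tr}(\zeta\eta)=\operatorname{tr}(\eta\zeta)=(n-1)f$ forces $\operatorname{tr}(B)=-f$. This avoids both the analysis of $\ker\eta$ (Hilbert syzygy theorem plus the rank count) and the bordered-matrix commutation trick, at no cost in generality; there is no division by $n$ or any characteristic issue, since only the identity $f=-\sum_i c_i\psi_i$ is used. Your side remark is also accurate: neither your proof nor, on close inspection, the paper's actually uses the hypothesis that the entries lie in $\m$ (the paper invokes it only to call the resolution of $\coker\eta$ minimal, which is not needed for the argument).
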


\begin{proof}
Let $\chi$ be the $1\times n$ row matrix consisting of the signed minors of $\zeta$. By the Hilbert-Burch theorem, $\im(\chi\tr)=\ker(\zeta\tr)$. By the Hilbert syzygy theorem, $\coker\eta$ has minimal free resolution \[0\to F\xrightarrow{\vartheta}R^n\xrightarrow{\eta}R^{n-1}\to\coker\eta\to 0\] where $F$ is a free $R$-module. We have $f\in\ann_R(\coker\eta)$, so $\rank(\coker\eta)=0$. Using the additivity of rank, we see that $\rank F=1$, hence $\vartheta$ is an $n\times 1$ column matrix.

Now $\eta(\zeta\eta-fI_n)=\eta\zeta\eta-f\eta=(\eta\zeta-fI_{n-1})\eta=0$, so $\im(\zeta\eta-fI_n)\subset\ker\eta=\im\vartheta$. Thus there is a row matrix $\lambda:R^n\to R$ such that $\zeta\eta-fI_n=\vartheta\lambda$. Then $\vartheta(\lambda\zeta)=\zeta\eta\zeta-f\zeta=\zeta(\eta\zeta-fI_{n-1})=0$. Since $\vartheta$ is injective, this means $\lambda\zeta=0$. Hence $\im(\lambda\tr)\subset\ker(\zeta\tr)=\im(\chi\tr)$, so there is $g\in R$ such that $\lambda=g\chi$. We conclude that $\zeta\eta-g\vartheta\chi=fI_n$.

Set \[\zeta'=\begin{bmatrix}\zeta&-g\vartheta\end{bmatrix}\text{ and }\eta'=\begin{bmatrix}\eta\\\chi\end{bmatrix}.\] These are both $n\times n$ matrices, and $\zeta'\eta'=\zeta\eta-g\vartheta\chi=fI_n$. It follows that $\eta'\zeta'=fI_n$. But \[\eta'\zeta'=\begin{bmatrix}\eta\zeta&-g\eta\vartheta\\\chi\zeta&-g\chi\vartheta\end{bmatrix},\] so we get $f=-g\chi\vartheta=\chi(-g\vartheta)$. Therefore $f\in\im\chi=I_{n-1}(\zeta)$.
\end{proof}

In order to compare $\R(E)=\R(E_1)$ and $\R(F)$ in high $T$-degrees, we first compare them in bidegree $(-1,2)$. This will be used as the base case in the proof of \autoref{refup}.

\begin{lem}\label{ref12}
Assume \autoref{reesdata} with $n=3$ and $m=1$. Then $\R(F)_{-1,2}=\R(E)_{0,1}\R(F)_{-1,1}$.
\end{lem}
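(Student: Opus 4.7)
The natural multiplication map
\[\mu:\R(E)_{0,1}\otimes_k\R(F)_{-1,1}\to\R(F)_{-1,2}\]
will be a map of $k$-vector spaces whose source and target both have dimension $3d_1$: I compute $\dim_k\R(E)_{0,1}=3$ (spanned by the images of $T_1,T_2,T_3$), $\dim_k\R(F)_{-1,1}=\dim_kF_{-1}=\sigma_1+\sigma_2=d_1$, and
\[\dim_k\R(F)_{-1,2}=\dim_k\Sym^2(F)_{-1}=2\sigma_1+(\sigma_1+\sigma_2)+2\sigma_2=3d_1\]
(using $R_{-1}=0$). Thus the claim reduces to injectivity of $\mu$.

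To analyze $\ker\mu$, I would take $(e_1,e_2,e_3)\in\R(F)_{-1,1}^3$ with $\sum T_ke_k=0$, write $T_k=\xi_{1k}w_1+\xi_{2k}w_2$ via \autoref{fpoly}, and decompose $e_k=D_kw_1+E_kw_2$ with $D_k\in R_{\sigma_1-1}$ and $E_k\in R_{\sigma_2-1}$. Expanding the sum in $\R(F)=R[w_1,w_2]$ and matching coefficients of $w_1^2$, $w_1w_2$, $w_2^2$ gives
\[\xi^1D=0,\qquad \xi^2E=0,\qquad \xi^1E+\xi^2D=0,\]
where $\xi^i$ is the $i$th row of $\xi$. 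Setting $c:=\xi^1E=-\xi^2D\in R_{d_1-1}$, this gives $\xi D=(0,-c)^{\mathrm T}$ and $\xi E=(c,0)^{\mathrm T}$, so that both $(c,0)$ and $(0,c)$ lie in $\im\xi=E$; equivalently $cF\subset E$, i.e., $c\in\ann_R(F/E)$ in degree $d_1-1$.

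The crux will be to show $\ann_R(F/E)_{d_1-1}=0$. For this, I would use \autoref{fmode}(d) to identify $\Ext^2_R(F/E,R)$ with the cyclic $R$-module $R(d_1)/(\varphi_{11},\varphi_{21},\varphi_{31})$, whose annihilator is exactly $(\varphi_{11},\varphi_{21},\varphi_{31})$, a homogeneous ideal generated in degree $d_1$. Since multiplication by any $r\in\ann_R(F/E)$ on $F/E$ is the zero endomorphism, the induced endomorphism on $\Ext^2_R(F/E,R)$ is also zero, giving the inclusion $\ann_R(F/E)\subset\ann_R(\Ext^2_R(F/E,R))=(\varphi_{11},\varphi_{21},\varphi_{31})$, which vanishes in degree $d_1-1$. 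Therefore $c=0$, and then $\xi D=\xi E=0$, so $D,E\in\ker\xi=R\varphi_1$; since $\deg\varphi_1=d_1$ exceeds both $\sigma_1-1$ and $\sigma_2-1$, this forces $D=E=0$, and $\mu$ is injective.

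The hard part will be the identification $\ann_R(F/E)=(\varphi_{11},\varphi_{21},\varphi_{31})$ via the $\Ext^2$ computation and the functorial argument; once that is in place, the remaining degree-counting is routine bookkeeping.
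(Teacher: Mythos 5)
Your proof is correct, and it shares the paper's overall skeleton: a dimension count reduces the claim to injectivity of the multiplication map, a kernel element produces a form of degree $d_1-1$, and that form is shown to lie in an ideal generated in degree $d_1$, hence to vanish. Where you genuinely diverge is in how that last containment is obtained. The paper rewrites the kernel condition as $[p_1\ p_2\ p_3]\xi\tr=[-f\w_2\ f\w_1]$ using the regular sequence $\w_1,\w_2$, evaluates at two points to get a matrix identity $\eta\xi\tr=fI_2$, and then invokes the matrix-factorization result \autoref{mxfac} (which needs $\grade I_2(\xi)=2$) to conclude $f\in I_2(\xi\tr)=I_1(\varphi_1)$ via Hilbert--Burch. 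You instead observe that $(c,0)\tr$ and $(0,c)\tr$ lie in $\im\xi=E$, so $c\in\ann_R(F/E)$, and then use functoriality of $\Ext^2_R(-,R)$ together with the presentation \eqref{eq:res1} to get $\ann_R(F/E)\subset\ann_R(\Ext^2_R(F/E,R))=(\varphi_{11},\varphi_{21},\varphi_{31})$, which is the same degree-$d_1$ ideal. Your route is more conceptual and bypasses \autoref{mxfac} entirely (the paper proves that lemma solely for this application), at the cost of leaning on the $\Ext$ identification in \autoref{fmode}(d) and the standard fact that the annihilator of a module annihilates its $\Ext$ modules; the paper's version is self-contained matrix algebra. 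The endgame --- $c=0$ forces $D,E\in\ker\xi=\im\varphi_1$, which vanishes below degree $d_1$ --- coincides with the paper's.
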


\begin{proof}
Recall that $\R(F)=R[\w_1,\w_2]$, with $\deg\w_i=(-\sigma_i,1)$. Then
\begin{align*}
\R(F)_{-1,1}&=R_{\sigma_1-1}\w_1\oplus R_{\sigma_2-1}\w_2\\
\R(F)_{-1,2}&=R_{2\sigma_1-1}\w_1^2\oplus R_{\sigma_1+\sigma_2-1}\w_1\w_2\oplus R_{2\sigma_2-1}\w_2^2.
\end{align*}
Because $\dim_{\k}R_\ell=\ell+1$ for any $\ell$, we get $\dim_{\k}\R(F)_{-1,1}=\sigma_1+\sigma_2=d_1$, while $\dim_{\k}\R(F)_{-1,2}=3\sigma_1+3\sigma_2=3d_1$. Set $\mathbf{T}=[T_1\ T_2\ T_3]$, considered as a function $\mathbf{T}:\R(F)_{-1,1}^3\to\R(F)_{-1,2}$. Since $\R(E)_{0,1}=\k T_1+\k T_2+\k T_3$, we have $\R(E)_{0,1}\R(F)_{-1,1}=\im(\mathbf{T})$. It will suffice to show that $\mathbf{T}$ is injective, for then $\dim_{\k}(\R(E)_{0,1}\R(F)_{-1,1})=3(\dim_{\k}\R(F)_{-1,1})=3d_1=\dim_{\k}\R(F)_{-1,2}$, which implies the equality $\R(E)_{0,1}\R(F)_{-1,1}=\R(F)_{-1,2}$.

Suppose $[p_1\ p_2\ p_3]\tr\in\ker(\mathbf{T})$, with $p_i\in \R(F)_{-1,1}$. From \autoref{fpoly}, $[\w_1\ \w_2]\xi=[T_1\ T_2\ T_3]$. Thus
\[0=\mathbf{T}\begin{bmatrix}p_1\\p_2\\p_3\end{bmatrix}=\begin{bmatrix}\w_1&\w_2\end{bmatrix}\xi\begin{bmatrix}p_1\\p_2\\p_3\end{bmatrix}\]
which we can rewrite as \[\begin{bmatrix}p_1&p_2&p_3\end{bmatrix}\xi\tr\begin{bmatrix}\w_1\\\w_2\end{bmatrix}=0.\]
Since $\w_1,\w_2$ is a regular sequence in $\R(F)$, there is $f\in\R(F)_{d_1-1,0}=R_{d_1-1}$ such that
\begin{equation}\label{eq:pxigv}\begin{bmatrix}p_1&p_2&p_3\end{bmatrix}\xi\tr=\begin{bmatrix}-f\w_2&f\w_1\end{bmatrix}.\end{equation}
For $a,b\in\k$, denote by $p_i(a,b)\in R$ the evaluation of $p_i$ at $(\w_1,\w_2)=(a,b)$. Let \[\eta=\begin{bmatrix}p_1(0,-1)&p_2(0,-1)&p_3(0,-1)\\p_1(1,0)&p_2(1,0)&p_3(1,0)\end{bmatrix}.\]
Then \eqref{eq:pxigv} gives \[\eta\xi\tr=\begin{bmatrix}-f\cdot(-1)&f\cdot0\\-f\cdot0&f\cdot1\end{bmatrix}=fI_2.\] By \autoref{mxfac}, $f\in I_2(\xi\tr)$.

Recall the exact sequence \eqref{eq:res1}, which in this setting is \[0\to R(-\sigma_1)\oplus R(-\sigma_2)\xrightarrow{\xi\tr}R^3\xrightarrow{\varphi_m\tr}R(d_1)\to\Ext^2_R(F/E,R)\to0.\] By the Hilbert-Burch theorem, $I_2(\xi\tr)=I_1(\varphi_m\tr)$, therefore $f\in I_2(\xi\tr)=I_1(\varphi_m)$. But $f\in R_{d_1-1}$, while $I_1(\varphi_m)$ is generated in degree $d_1$, therefore $f=0$. Hence by \eqref{eq:pxigv}, $[p_1\ p_2\ p_3]\xi\tr=0$, so $[p_1\ p_2\ p_3]\tr\in\ker\xi$. Recall from \eqref{eq:res2} that $\ker\xi=\im\varphi_m$, which is generated in degree $d_1$. However, $\deg_xp_i\le\sigma_2-1<d_1$, so we must have $p_i=0$, hence $\mathbf{T}$ is injective as claimed.
\end{proof}

The next proposition shows that $\R(M)$ is generated as a module over $\R(E)_{0,*}=\k[T_1,T_2,T_3]$ in $T$-degrees 0 and 1.

\begin{prop}\label{refup}
Assume \autoref{reesdata} with $n=3$ and $m=1$. Let $U=\R(E)_{0,*}=\k[T_1,T_2,T_3]$. Then $\R(F)_{i,*}=U\R(F)_{i,0}+U\R(F)_{i,1}$ for all $i\ge-1$.
\end{prop}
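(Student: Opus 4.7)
My plan is to reduce the proposition to the single statement
\[\R(F)_{i,j}\subset U_{*,1}\R(F)_{i,j-1}\qquad\text{for all }i\ge-1,\ j\ge2,\]
since iterating this inclusion $j-1$ times produces $\R(F)_{i,j}\subset U_{*,1}^{j-1}\R(F)_{i,1}\subset U\R(F)_{i,1}$ for every $j\ge2$, while the $j=0,1$ pieces of $\R(F)_{i,*}$ lie trivially in $U\R(F)_{i,0}+U\R(F)_{i,1}$.

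Because $\R(F)_{i,j}$ is spanned over $\k$ by monomials $h=x_0^ax_1^b\w_1^{\alpha_1}\w_2^{\alpha_2}$ with $\alpha_1+\alpha_2=j$ and $a+b-\alpha_1\sigma_1-\alpha_2\sigma_2=i$, it is enough to place each such $h$ in $U_{*,1}\R(F)_{i,j-1}$. The key idea is to bootstrap from \autoref{ref12}: try to exhibit a sub-monomial $h'=x_0^{a'}x_1^{b'}\w_1^{\alpha_1'}\w_2^{\alpha_2'}$ that divides $h$ monomially (i.e., $0\le a'\le a$, $0\le b'\le b$, $0\le\alpha_i'\le\alpha_i$) and lies in $\R(F)_{-1,2}$. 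If so, \autoref{ref12} supplies a decomposition $h'=\sum_\ell T_\ell g_\ell$ with $g_\ell\in\R(F)_{-1,1}$, and multiplying through by $h/h'\in\R(F)_{i+1,j-2}$ gives
\[h=\sum_\ell T_\ell\cdot\bigl((h/h')\,g_\ell\bigr),\]
with each factor $(h/h')g_\ell\in\R(F)_{i,j-1}$, placing $h\in U_{*,1}\R(F)_{i,j-1}$ as required.

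The existence of such an $h'$ reduces to choosing nonnegative integers $\alpha_1',\alpha_2'$ with $\alpha_1'+\alpha_2'=2$, $\alpha_i'\le\alpha_i$, and $\alpha_1'\sigma_1+\alpha_2'\sigma_2\ge1$ (so that $a'+b'=-1+\alpha_1'\sigma_1+\alpha_2'\sigma_2$ is nonnegative and can be split between $a'$ and $b'$; the upper bound $a'+b'\le a+b$ is automatic from $i\ge-1$ and $\alpha_i'\le\alpha_i$). A short case analysis on $(\sigma_1,\sigma_2)$ and $(\alpha_1,\alpha_2)$ shows this succeeds in every configuration except the lone case $\sigma_2=0$ and $\alpha_1=0$, i.e., $h=x_0^ax_1^b\w_2^j$.

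This exceptional case is precisely what \autoref{winre} is designed to handle: when $\sigma_2=0$ we have $r=1$, so after the automorphism of $\k[\w_1,\w_2]$ supplied by that lemma (which leaves the rest of the setup unchanged) we obtain $\w_2\in\R(E)$, hence $\w_2\in U_{*,1}$, and then $h=\w_2\cdot(x_0^ax_1^b\w_2^{j-1})$ with $x_0^ax_1^b\w_2^{j-1}\in\R(F)_{i,j-1}$ directly. The main obstacle is simply the bookkeeping in the case analysis for the existence of $h'$ and correctly isolating the exceptional configuration where \autoref{winre} must be invoked; no ideas deeper than \autoref{ref12} and \autoref{winre} are required.
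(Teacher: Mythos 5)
Your proof is correct, and its engine is the same as the paper's: iterate \autoref{ref12} to lower the $T$-degree one step at a time. The organization differs. The paper first proves $\R(F)_{-1,j}=U_j\R(F)_{-1,0}+U_{j-1}\R(F)_{-1,1}$ by induction on $j$, using the factorization $\R(F)_{-1,j}=\R(F)_{-1,2}\R(F)_{0,j-2}$ together with \autoref{ref12}, and then deduces the statement for general $i$ by multiplying through by $R_{i+1}=\R(F)_{i+1,0}$; you instead work monomial by monomial directly in $x$-degree $i$, splitting off a divisor of bidegree $(-1,2)$. The substantive difference is your exceptional case. When $\sigma_2=0$, every monomial of $\R(F)_{-1,*}$ must involve $\w_1$ (its $x$-degree is $a+b-\alpha_1\sigma_1$, which cannot equal $-1$ if $\alpha_1=0$), so the monomials $x_0^ax_1^b\w_2^j\in\R(F)_{i,j}$ are not reached by multiplying $\R(F)_{-1,*}$ by $R_{i+1}$; the paper's final step passes over them silently. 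Your explicit appeal to \autoref{winre} --- which, after a change of basis of $F$ that preserves the bigraded decomposition of $\R(F)$ and the subring $\R(E)$, places $\w_2$ in $U_1$ --- is exactly what is needed to handle these elements. So your case analysis is not redundant bookkeeping; it makes explicit a point that the paper's own proof leaves implicit, and the rest of your argument (the existence of the sub-monomial $h'$, the bound $a'+b'\le a+b$ from $i\ge-1$, and the descent $\R(F)_{i,j}\subset U_1\R(F)_{i,j-1}$) checks out.
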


\begin{proof}
We first show by induction on $j$ that \[\R(F)_{-1,j}=U_j\R(F)_{-1,0}+U_{j-1}\R(F)_{-1,1}.\] For $j=0$ and $j=1$ this is clear. For $j\ge 2$,
\begin{align*}
\R(F)_{-1,j}&=\R(F)_{-1,2}\R(F)_{0,j-2}\\
&=U_1\R(F)_{-1,1}\R(F)_{0,j-2}\text{ by \autoref{ref12}}\\
&=U_1\R(F)_{-1,j-1}\\
&=U_1(U_{j-1}\R(F)_{-1,0}+U_{j-2}\R(F)_{-1,1})\text{ by induction}\\
&=U_j\R(F)_{-1,0}+U_{j-1}\R(F)_{-1,1}.
\end{align*}
This proves that $\R(F)_{-1,*}=U\R(F)_{-1,0}+U\R(F)_{-1,1}$. Multiplying both sides by $R_{i+1}=\R(F)_{i+1,0}$ gives $\R(F)_{i,*}=U\R(F)_{i,0}+U\R(F)_{i,1}$.
\end{proof}

By \autoref{refup}, the $U$-module $\R(M)$ is generated in $T$-degrees 0 and 1. In $T$-degree 0, $\R(M)_{*,0}$ is just the ring $R=\k[x_0,x_1]$. To understand $\R(M)_{*,1}$ as a $U$-module, recall that $\R(E)_{*,1}=E$ and $\R(M)_{*,1}=M=F_{\ge0}$. Therefore $\R(M)_{*,1}/\R(E)_{*,1}=(F/E)_{\ge0}$. The free resolution of $F/E$ was given in \autoref{fmode}(a):
\begin{equation}\label{eq:res2b}0\to R(-d_1)\xrightarrow{\varphi_1}R^3\xrightarrow{\xi}R(\sigma_1)\oplus R(\sigma_2)\to F/E\to0.\end{equation}

\begin{rmk}\label{fehf}
Assume \autoref{reesdata} with $n=3$ and $m=1$. Let $H_{F/E}(i)$ be the Hilbert function of $F/E$. Then for all $-1\le i\le d_1-1$, we have $H_{F/E}(i)=d_1-i-1$.
\end{rmk}

\begin{proof}
Recall from \autoref{fmode}(e) that $\sigma_1+\sigma_2=d_1$. Then this is a straightforward computation from \eqref{eq:res2b}.
\end{proof}

For $-1\le i\le d_1-1$, we have the following refinement of \autoref{refup}.

\begin{prop}\label{rffree}
Assume \autoref{reesdata} with $n=3$ and $m=1$. Let $U=\R(E)_{0,*}=\k[T_1,T_2,T_3]$. Then for all $-1\le i\le d_1-1$, there is an isomorphism $\R(F)_{i,*}\cong U^{i+1}\oplus U^{d_1-i-1}(-1)$ under which $\R(E)_{i,*}\cong U^{i+1}$.
\end{prop}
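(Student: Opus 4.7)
The plan is to construct an explicit graded $U$-linear map $\phi: U^{i+1} \oplus U^{d_1-i-1}(-1) \to \R(F)_{i,*}$ whose image on the first summand equals $\R(E)_{i,*}$, and to show $\phi$ is an isomorphism via a surjectivity-plus-Hilbert-function argument. I would send the standard generators of $U^{i+1}$ to a $\k$-basis $b_1,\ldots,b_{i+1}$ of $R_i = \R(F)_{i,0}$, and the standard generators of $U^{d_1-i-1}(-1)$ (which sit in $T$-degree $1$ after the shift) to lifts $f_1,\ldots,f_{d_1-i-1}\in F_i$ of a $\k$-basis of $(F/E)_i$; \autoref{fehf} guarantees this is the correct number of lifts.

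To identify $\phi(U^{i+1})$ with $\R(E)_{i,*}$, I would use the identification $\R(E_1)=S/(g_1)$ recorded at the start of this section. Because $g_1$ has bidegree $(d_1,1)$, multiplication by $g_1$ raises $x$-degree by $d_1$, so for $-1\le i\le d_1-1$ the principal ideal $(g_1)$ contributes nothing in $x$-degree $i$, yielding $\R(E)_{i,*}=R_i\otimes_{\k}U=U\cdot R_i\cong U^{i+1}$. Surjectivity of $\phi$ then follows from \autoref{refup}: we have $\R(F)_{i,*}=U R_i + U F_i$, and because $E_i=R_iT_1+R_iT_2+R_iT_3=U_1\cdot R_i$ while $F_i=E_i+\sum_j\k f_j$ by construction, we deduce $UF_i=UR_i+\sum_jUf_j$, which is exactly the image of $\phi$.

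The main obstacle is injectivity, which I would settle by matching Hilbert functions. Writing $\R(F)=R[\w_1,\w_2]$ with $\deg\w_\ell=(-\sigma_\ell,1)$, a direct calculation yields
\[\dim_{\k}\R(F)_{i,j}=\sum_{\substack{a+b=j\\ a\sigma_1+b\sigma_2\ge -i}}(i+a\sigma_1+b\sigma_2+1).\]
Splitting on the cases $\sigma_2=0$ versus $\sigma_1,\sigma_2>0$, and using $\sigma_1+\sigma_2=d_1$ from \autoref{fmode}(e), this sum simplifies to $(i+1)(j+1)+d_1\binom{j+1}{2}$ for every $-1\le i\le d_1-1$ and every $j\ge 0$. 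This coincides with $(i+1)\binom{j+2}{2}+(d_1-i-1)\binom{j+1}{2}$, the Hilbert function of $U^{i+1}\oplus U^{d_1-i-1}(-1)$. A graded surjection of finitely generated graded $U$-modules with the same Hilbert function is an isomorphism in each graded piece, so $\phi$ is an isomorphism, and the identification $\phi(U^{i+1})=\R(E)_{i,*}$ above yields the claimed compatibility.
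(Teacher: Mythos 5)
Your proposal is correct and takes essentially the same route as the paper's proof: the same map assembled from a $\k$-basis of $R_i=\R(F)_{i,0}$ together with lifts to $F_i$ of a basis of $(F/E)_i$ counted by \autoref{fehf}, surjectivity via \autoref{refup}, and injectivity by matching the Hilbert function of $\R(F)_{i,*}$ against that of $U^{i+1}\oplus U^{d_1-i-1}(-1)$. The only (harmless) difference is that you identify $\R(E)_{i,*}\cong U^{i+1}$ directly from the vanishing of $(g_1)$ in $x$-degrees below $d_1$, whereas the paper extracts this from the injectivity of the full map.
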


\begin{proof}
The vector space $\R(F)_{i,0}$ is generated by $x_0^i,x_0^{i-1}x_1,\ldots,x_1^i$, so it has dimension $i+1$. By \autoref{fehf}, $\dim_{\k}(F_i/E_i)=d_1-i-1$. Thus there are elements $p_{i,1},\ldots,p_{i,d_1-i-1}\in F_i=\R(F)_{i,1}$ whose images in $F_i/E_i$ form a basis. Since $E_i=\R(E)_{i,1}=U_1\R(E)_{i,0}=U_1\R(F)_{i,0}$, the vector space $\R(F)_{i,1}/U_1\R(F)_{i,0}$ is generated by the images of $p_1,\ldots,p_{d_1-i-1}$. Now consider the map $U^{i+1}\oplus U^{d_1-i-1}(-1)\to \R(F)_{i,*}$ given by $[x_0^i\ \cdots\ x_1^i\ p_1\ \cdots\ p_{d_1-i-1}]$. Then the image of this map contains $\R(F)_{i,0}$, and it contains $\R(F)_{i,1}$. Since $\R(F)_{i,*}=U\R(F)_{i,0}+U\R(F)_{i,1}$ by \autoref{refup}, it is a surjective map, and it sends $U^{i+1}$ to $\R(E)_{i,*}=Ux_0^i+\cdots+Ux_1^i$. To show that it is actually an isomorphism $U^{i+1}\oplus U^{d_1-i-1}(-1)\cong\R(F)_{i,*}$, it suffices to show that the $U$-modules $U^{i+1}\oplus U^{d_1-i-1}(-1)$ and $\R(F)_{i,*}$ have the same Hilbert function.

The Hilbert function of the former is
\begin{align*}
H_{U^{i+1}\oplus U^{d_1-i-1}(-1)}(j)
&=(i+1)\binom{j+2}{2}+(d_1-i-1)\binom{j+1}{2}\\
&=(i+1)(j+1)+d_1\binom{j+1}{2}.
\end{align*}
To compute the Hilbert function of $\R(F)_{i,*}$, note that a basis for $\R(F)_{i,j}$ consists of monomials $x_0^kx_1^\ell\w_1^{j-a}\w_2^a$ where $k+\ell=(j-a)\sigma_1+a\sigma_2+i$. Thus
\begin{align*}
H_{\R(F)_{i,*}}(j)
&=\sum_{a=0}^j((j-a)\sigma_1+a\sigma_2+i+1)\\
&=\sigma_1\sum_{a=0}^j(j-a)+\sigma_2\sum_{a=0}^ja+(i+1)(j+1)\\
&=\sigma_1\binom{j+1}{2}+\sigma_2\binom{j+1}{2}+(i+1)(j+1)\\
&=(i+1)(j+1)+d_1\binom{j+1}{2}\\
&=H_{U^{i+1}\oplus U^{d_1-i-1}(-1)}(j).
\end{align*}
Therefore $\R(F)_{i,*}\cong U^{i+1}\oplus U^{d_1-i-1}(-1)$.
\end{proof}

We may now compute the generators of $\K_{i,*}$ for any $i\ge d_1-1$.

\begin{thm}\label{kregen2}
Assume \autoref{reesdata} with $n=3$ and $m=1$. Fix $i\ge d_1-1$ and set $c=d_2-i$. Let $U=\k[T_1,T_2,T_3]$. Let $\Omega_c$ and $\mathcal{B}_c$ be as defined in \autoref{dfrfgen}. For each $0\le\ell\le d_1-2$, choose elements $p_{\ell,1},\ldots,p_{\ell,d_1-\ell-1}\in F_\ell=\R(F)_{\ell,1}$ whose images in $F_\ell/E_\ell$ form a basis. Then \[\{g_1x_0^{i-d_1},\ldots,g_1x_1^{i-d_1}\}\cup g_2\mathcal{B}_c\cup\{g_2p_{\as-c,j}\ut^{\ua}\mid\alpha\in\Omega_c, 1\le j\le d_1-\as+c-1\}\] is a generating set (not necessarily minimal) for $\K_{i,*}$ as a $U$-module.
\end{thm}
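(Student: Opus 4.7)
The plan is to decompose $\K_{i,*}$ along the filtration $\K_1 = (g_1) \subset \K_2 = \K$, which splits the problem into $(g_1)_{i,*}$ together with any $S$-lift of $(\K_2\R(E_1))_{i,*}$. The piece $(g_1)_{i,*} = g_1 \cdot S_{i-d_1,*}$ is visibly free over $U = \k[T_1,T_2,T_3]$ on the monomial basis $\{g_1 x_0^a x_1^{i-d_1-a} : 0 \le a \le i - d_1\}$, which is exactly the first listed set of generators (empty when $i = d_1 - 1$). For the second summand, \autoref{kre} with $m = 1$ gives $\K_2\R(E_1) = g_2\R(F)_{\ge-d_2,*} \cap \R(E_1)$. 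Intersecting in $x$-degree $i$ and applying $\R(E_1)_{i,*} = \R(F)_{i,*}$ from \autoref{remint}(a) (valid because $i \ge d_1 - 1$), we obtain the clean identification $(\K_2\R(E_1))_{i,*} = g_2\R(F)_{-c,*}$. So the task reduces to producing $U$-generators of $\R(F)_{-c,*}$, which after multiplication by $g_2$ will supply the remaining two sets.

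The technical heart of the argument is a monomial-by-monomial reduction: every monomial of $\R(F)_{-c,*}$ lies in $\sum_{\ua\in\Omega_c}\ut^{\ua}\cdot\R(F)_{\as - c,*}$. Indeed, a monomial $x_0^j x_1^k \ut^{\ub}$ of $x$-degree $-c$ satisfies $j + k = \bs - c$, and $\ub^+$ then satisfies $(\ub^+)^0 = 0$ and $\ub^+\cdot\us = \bs \ge c$, so $\ub^+ \in \Lambda_c$. \autoref{lambda0} furnishes $\ua \in \Omega_c$ with $\ua \le \ub^+ \le \ub$; setting $\ub' = \ub - \ua \in \N^s$ factors the original monomial as $\ut^{\ua}\cdot(x_0^j x_1^k \ut^{\ub'})$, and a degree count shows $x_0^j x_1^k \ut^{\ub'} \in \R(F)_{\as - c,*}$. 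The inequalities defining $\Omega_{c,i}$ force $0 \le \as - c < \sigma_i \le d_1$, so $\as - c$ lies in the range where \autoref{rffree} gives a free $U$-module presentation $\R(F)_{\as - c,*} \cong U^{\as - c + 1} \oplus U^{d_1 - \as + c - 1}(-1)$ with basis $\{x_0^\ell x_1^{\as - c - \ell} : 0 \le \ell \le \as - c\} \cup \{p_{\as - c, k} : 1 \le k \le d_1 - \as + c - 1\}$. Multiplying this basis by $\ut^{\ua}$ and taking the union over $\ua \in \Omega_c$ yields exactly $\Bc \cup \{p_{\as - c, k}\ut^{\ua}\}$ as a $U$-generating set for $\R(F)_{-c,*}$; multiplication by $g_2$ then produces the two remaining sets in the theorem.

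The main obstacle I anticipate is the transition from $\R(M)_{0,*}$-generators of $\R(F)_{-c,*}$ (which \autoref{rfgen}(b) would give directly) to the smaller ring $U$; the factorization $\ut^{\ua}\cdot(x_0^j x_1^k \ut^{\ub'})$ is designed precisely to avoid this, because $\ut^{\ua}$ is a fixed monomial in $\R(F)$ and the residual factor lies in an $x$-degree where \autoref{rffree} provides a concrete $U$-basis. A secondary bookkeeping issue is checking the range $0 \le \as - c \le d_1 - 1$ needed to apply \autoref{rffree}, which follows from $c \le \as < c + \sigma_i$ combined with $\sigma_i \le \sigma_1 \le d_1$ from \autoref{fmode}(e); this also ensures that the index bound $1 \le k \le d_1 - \as + c - 1$ is nonnegative, so the third set is well-defined. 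Because the theorem only asserts a generating set and not minimality, no further argument about independence of the three sets is required.
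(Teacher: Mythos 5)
Your proposal is correct and follows essentially the same route as the paper: both reduce to showing $\K_{i,*}\R(E_1)=g_2\R(F)_{-c,*}$ via \autoref{kre} and \autoref{remint}(a), decompose $\R(F)_{-c,*}$ as $\sum_{\ua\in\Omega_c}\ut^{\ua}\R(F)_{\as-c,*}$, and then generate each piece $\R(F)_{\as-c,*}$ over $U$ in $T$-degrees $0$ and $1$ using the monomials in $x_0,x_1$ together with the lifts $p_{\as-c,j}$. The only cosmetic difference is that you re-derive the monomial factorization directly from \autoref{lambda0} and package the last step through \autoref{rffree}, where the paper cites \autoref{rfgen}(b) and \autoref{refup}.
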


\begin{proof}
Since $\R(E)=\R(E_1)\cong S/(g_1)$, and $g_1$ has $x$-degree $d_1$, we just need to show that
\[g_2\mathcal{B}_c\cup\{g_2p_{\as-c,j}\ut^{\ua}\mid\alpha\in\Omega_c, 1\le j\le d_1-\as+c-1\}\]
is a minimal generating set for $\K_{i,*}\R(E)$. We showed in \eqref{eq:jrf} that $\K_{i,*}\R(E)=g_2\R(F)_{-c,*}$. From \autoref{rfgen}(b), we know that $g_2\mathcal{B}_c$ is a minimal generating set for $\K_{i,*}\R(E)$ as an $\R(M)_{0,*}$-module. Recalling the definition of $\mathcal{B}_c$, this means that
\begin{equation}\label{eq:kupgen}\K_{i,*}\R(E)=g_2\sum_{\ua\in\Omega_c}\R(M)_{0,*}(x_0,x_1)^{\as-c}\ut^{\ua}=g_2\sum_{\ua\in\Omega_c}\R(M)_{\as-c,*}\ut^{\ua}.\end{equation}
By \autoref{refup}, $\R(M)_{\as-c,*}=U\R(M)_{\as-c,0}+U\R(M)_{\as-c,1}$. The vector space $\R(M)_{\as-c,0}$ is spanned by $\{x_0^{\as-c-j}x_1^j\mid 0\le j\le\as-c\}$, while the vector space $\R(M)_{\as-c,1}/U_1\R(M)_{\as-c,0}=(F/E)_{\as-c}$ is spanned by the images of $\{p_{\as-c,j}\mid 1\le j\le d_1-\as+c-1\}$. Therefore $\R(M)_{\as-c,*}$ is generated as an $U$-module by the union of these two sets. Then by \eqref{eq:kupgen}, $\K_{i,*}\R(E)$ is generated as an $U$-module by the union of \[g_2\mathcal{B}_c=\{g_2x_0^{\as-c-j}x_1^j\ut^{\ua}\mid\ua\in\Omega_c,\ 0\le j\le\as-c\}\] and \[\{g_2p_{\as-c,j}\ut^{\ua}\mid\alpha\in\Omega_c,\ 1\le j\le d_1-\as+c-1\}.\qedhere\]
\end{proof}

\begin{ex}[continuation of \autoref{exgen0}]\label{exgen0II}
Assume $n=3$ and assume that the entries of the first column of $\varphi$ are linearly dependent. In \autoref{exgen0}, we computed the minimal generators of $\K$ with $x$-degree $\ge d_1$. As for the generators having $x$-degree $d_1-1$, we claim that the generating set given in \autoref{kregen2} is minimal. Note that $\Omega_c$ (where $c=d_2-d_1+1$) in this case consists of only one element, namely $(\alpha_1,0)$ where $\alpha_1=\lceil\frac{d_2}{d_1}\rceil$ (as in the $\sigma_2=0$ case of \autoref{exalp}). Therefore the generating set in \autoref{kregen2} is just \[g_2\w_1^{\alpha_1}\{x_0^{\alpha_1\sigma_1-c},\ldots,x_1^{\alpha_1\sigma_1-c},p_{\alpha_1\sigma_1-c,1},\ldots,p_{\alpha_1\sigma_1-c,d_1-\alpha_1\sigma_1+c-1}\}.\]
By \autoref{rffree}, $\{x_0^{\alpha_1\sigma_1-c},\ldots,x_1^{\alpha_1\sigma_1-c},p_{\alpha_1\sigma_1-c,1},\ldots,p_{\alpha_1\sigma_1-c,d_1-\alpha_1\sigma_1+c-1}\}$ is a minimal generating set for $\R(F)_{\alpha_1\sigma_1-c,*}$ as a $U$-module. Thus the generating set from \autoref{kregen2} is a minimal generating set for $\K$ in $x$-degree $d_1-1$.
\end{ex}

\begin{ex}[continuation of \autoref{ex2n}]\label{ex2nII}
Assume $n=3$ and $d_1=2$, and assume that the entries of the first column of $\varphi$ are linearly independent, as in \autoref{ex2n}. To determine the minimal generators of $\K$ having $x$-degree 1, first note that $\Omega_{d_2-1}=\{(\alpha_1,\alpha_2)\mid\alpha_1+\alpha_2=d_2-1\}$, therefore $g_2\mathcal{B}_{d_2-1}=\{g_2\w_1^{\alpha_1}\w_2^{\alpha_2}\mid\alpha_1+\alpha_2=d_2-1\}$. We claim that this is a minimal generating set of $\K_{1,*}$. Minimality will follow from \autoref{rfgen}, so we just have to show it is generating. By \autoref{kregen2}, the only other generators of $\K_{1,*}$ have the form $g_2p_{\alpha_1+\alpha_2-d_2+1,j}\w_1^{\alpha_1}\w_2^{\alpha_2}$ for $(\alpha_1,\alpha_2)\in\Omega_{d_2-1}$. But since $(\alpha_1,\alpha_2)\in\Omega_{d_2-1}$, this means $\alpha_1+\alpha_2-d_2+1=0$ so the generators have the form $g_2p_{0,1}\w_1^{\alpha_1}\w_2^{\alpha_2}$, where $p_{0,1}$ is a lift of a generator of $F_1/E_1$. Since $\alpha_1+\alpha_2=d_2-1$,
\[g_2p_{0,1}\w_1^{\alpha_1}\w_2^{\alpha_2}\in g_2\R(F)_{0,1}(\w_1,\w_2)^{d_2-1}\subset g_2\R(F)_{-1,2}(\w_1,\w_2)^{d_2-2}.\]
But $\R(F)_{-1,2}=U_1\R(F)_{-1,1}$ by \autoref{refup}, and $\R(F)_{-1,1}$ is generated by $\w_1$ and $\w_2$, thus
\[g_2p_{0,1}\w_1^{\alpha_1}\w_2^{\alpha_2}\in g_2\R(F)_{-1,2}(\w_1,\w_2)^{d_2-2}\subset U_1(g_2(\w_1,\w_2)^{d_2-1}).\]
Therefore $g_2p_{0,1}\w_1^{\alpha_1}\w_2^{\alpha_2}$ is in the $U$-submodule generated by $g_2\mathcal{B}_{d_2-1}$. Hence $g_2p_{0,1}\w_1^{\alpha_1}\w_2^{\alpha_2}$ is not part of a minimal generating set, so the only minimal generators are $g_2\mathcal{B}_{d_2-1}$.

Combining this with what we already showed in \autoref{ex2n}, we conclude that all the minimal generators of $\K$ besides $g_1$ having $x$-degree $\ge1$ are $g_2\w_1^{\alpha_1}\w_2^{\alpha_2}$ for $\alpha_1,\alpha_2\in\N$ such that $\alpha_1+\alpha_2\le d_2-1$. Since the only generator of $\K$ in degree 0 is the equation of the fiber ring, in bidegree $(0,d)$, we have all the minimal generators of $\K$.
\end{ex}

\begin{ex}
Unlike in the previous examples, it is not always possible to determine which, or even how many, of the generators in \autoref{kregen2} are minimal, from only the numerical data of $d_1$, $d_2$, $\sigma_1$, and $\sigma_2$. For example, take
\[\varphi=\begin{bmatrix}x_0^4&x_1^7\\x_0^2x_1^2&0\\x_1^4&x_0^7\end{bmatrix}.\]
Then $d_1=4$, $d_2=7$, and $\sigma_1=\sigma_2=2$. A computation in Macaulay2 shows that $\K_{3,*}$ has seven minimal generators: three in degree 3, and four in degree 4. On the other hand, consider
\[\varphi=\begin{bmatrix}x_0^4+x_0^3x_1&x_1^7\\x_0^2x_1^2&0\\x_1^4&x_0^7\end{bmatrix}.\]
Then, as before, we have $d_1=4$, $d_2=7$, and $\sigma_1=\sigma_2=2$. But in this case, $\K_{3,*}$ has only six minimal generators: three in degree 3, and three in degree 4.
\end{ex}

\section*{Acknowledgments}

This work was carried out as part of the author's doctoral thesis research. The author is very grateful to his advisor, Claudia Polini, for suggesting the problem and for helpful discussions, as well as her comments on an early version of the paper.

\bibliographystyle{abbrv}
\bibliography{rees-sing}
\end{document}